\documentclass[a4paper,11pt,reqno]{amsart}
\usepackage[centering,includeheadfoot,margin=3cm]{geometry}
\usepackage{graphics,enumitem,epsfig,textcomp}
\usepackage{amsfonts,amsmath,amssymb,euscript,color,mathrsfs}
\usepackage[utf8]{inputenc}	

\usepackage[caption=false,subrefformat=parens,labelformat=parens]{subfig}

\usepackage{url}

\usepackage[symbol]{footmisc}

\usepackage{hyperref}

\numberwithin{equation}{section}

\newtheorem{theorem}{Theorem}[section]

\newtheorem{lemma}{Lemma}[section]
\newtheorem{proposition}{Proposition}[section]
\newtheorem{corollary}{Corollary}[section]

\newtheorem{remark}{Remark}[section]

\renewcommand{\d}{\,\mathrm{d}}
\newcommand{\N}{\mathbb{N}}
\newcommand{\R}{\mathbb{R}}
\newcommand{\Norm}[1]{\left\|#1\right\|}
\newcommand{\Na}[2]{\Norm{#2}_{#1}}
\newcommand{\Ha}[1]{H^1_0(\Omega;#1)}
\newcommand{\E}{\mathcal{E}}
\newcommand{\F}{\mathcal{F}}
\newcommand{\G}{\mathcal{G}}
\newcommand{\J}{\mathcal{J}}
\newcommand{\K}{\mathcal{K}}
\newcommand{\Rext}{\overline{\R}}

\DeclareMathOperator*{\argmin}{arg\,min}

\begin{document}
%%%%%%%%%%%%%%%%

%%%%%%%%%%%%%%%%%%%%%%%%%%%%%%%%%
%% Title
%%%%%%%%%%%%%%%%%%%%%%%%%%%%%%%%%
%\centerline{\Large\textbf{Rigorous connected graphon limit for a model}}
%\centerline{\Large\textbf{of biological transportation networks}}
%\vskip 7mm

\centerline{\Large\textbf{Variational relaxation and gradient flows}}
\centerline{\Large\textbf{for degenerate elliptic conductivity problems}}
\vskip 7mm

%%%%%%%%%%%%%%%%%%%%%%%%%%%%%%%%%
%% Authors
%%%%%%%%%%%%%%%%%%%%%%%%%%%%%%%%%
\centerline{
	{\large Jan Haskovec}\footnote{Mathematical and Computer Sciences and Engineering Division,
		King Abdullah University of Science and Technology,
		Thuwal 23955-6900, Kingdom of Saudi Arabia; 
		{\it jan.haskovec@kaust.edu.sa}}\quad and\quad
	{\large Peter Markowich}\footnote{Faculty of Mathematics, University of Vienna,
	     Oskar-Morgenstern-Platz 1, 1090 Wien, Austria;
		{\it peter.markowich@univie.ac.at}}\quad and\quad
	{\large Stefano Zampini}\footnote{Mathematical and Computer Sciences and Engineering Division,
		King Abdullah University of Science and Technology,
		Thuwal 23955-6900, Kingdom of Saudi Arabia; 
		{\it stefano.zampini@kaust.edu.sa}}	}
\vskip 10mm

\noindent{\bf Abstract.}
We develop a convex-analytic $L^2$-framework for gradient flows of pumping-energy
functionals coupled with elliptic state equations and an evolving
nonnegative conductivity $a\geq 0$. The main difficulty is that the conductivity may be
merely square-integrable, or degenerate,
and formal differentiation need not produce an $L^2$-subgradient.
After imposing the strictly positive lower cutoff
$r+a\geq r>0$, we define the pumping-energy functional by a variational formula,
prove convexity and lower semicontinuity, and show that its $L^2$-subdifferential
is nonempty if and only if the pressure gradient belongs to
$L^4$. We also obtain a unique maximal-monotone $L^2$-gradient flow
satisfying an energy identity. 
Moreover, we prove Mosco convergence of the pumping-energy functionals
as the cutoff vanishes.
An inverse-moment conductivity constraint provides a weak alternative to a uniform positive
lower bound. For nonlinear laws $a=\mu(b)$, primal and flux
representations distinguish two complementary regimes. If $\mu$ is concave,
the pumping-energy functional remains convex, and we characterize its
$L^2$-subdifferential and the induced gradient flow.
If $1/\mu$ is concave, the pumping-energy functional is concave,
and therefore its negative is convex. 
We construct its pointwise gradient flow on a uniformly elliptic box.
Admissible stationary points for power laws yield $p$-Laplacian equations with $p>2$ and
$1 < p < 2$, respectively.
\vskip 7mm

%\allowdisplaybreaks

%
%\subjclass{Primary: 35Q35, 35Q81; Secondary: 35L65, 35K55, 76W05.}

\noindent{\bf MSC2020:}
Primary: 47J35; Secondary: 35J20, 35J70, 35J92, 49J45, 92C42.
\vskip 2mm

%%35Q92 ? PDEs in connection with biology and other natural sciences
%35B27, % ? Homogenization in context of PDEs
%49J45, % ? Methods involving semicontinuity and convergence; relaxation
%49Q22,  % - Optimal transportation
%92C42  % -  Systems biology, networks

\noindent{\bf Keywords:}
Pumping-energy functional; variational relaxation; $L^2$-gradient
flow; degenerate conductivity; nonlinear mobility; Mosco convergence;
$p$-Laplacian; biological transportation networks.
\vspace{7mm}

%\maketitle \centerline{\date}

\tableofcontents

%%%%%%%%%%%%%%%%%%%%%%%%%%%%%%%%%
%% Introduction
%%%%%%%%%%%%%%%%%%%%%%%%%%%%%%%%%
\section{Introduction}\label{sec:Intro}

\subsection{Model, motivation, and objectives}

Adaptive transportation networks must reconcile two competing demands: they
should move material with little dissipation, but the conducting structure is
itself costly to build and maintain. In the basic modeling framework,
a sufficiently regular, uniformly positive scalar conductivity $a=a(x)$ on a bounded Lipschitz domain $\Omega\subset\R^d$
determines a pressure $u=u[a]$ from the Dirichlet problem
\begin{equation}  \label{eq:E0ell}
   \begin{aligned}
      -\operatorname{div}(a\nabla u) &= f &&\text{in }\Omega,\\
      u&=0 &&\text{on }\partial\Omega,
   \end{aligned}
\end{equation}
where $f\in L^2(\Omega)$ is the prescribed source--sink distribution. With
the Darcy convention, $-a\nabla u$ is the flux and $a|\nabla u|^2$ is the
local dissipated power. In this classical setting, the associated energy is
\begin{equation}   \label{def:E0}
   \E(a):=\int_\Omega a|\nabla u[a]|^2\,\d x
      +\int_\Omega g(a)\,\d x.
\end{equation}
The first term is the pumping-energy contribution; it represents the total
power dissipated by the stationary Darcy flow.
The second term models the metabolic maintenance cost of the transportation structure.
Convex power costs $g(s)=s^\gamma/\gamma$,
$\gamma\geq1$, are a
principal example. The competition between these terms drives the
conductivity toward efficient configurations and makes its $L^2$-gradient
flow a natural relaxation mechanism for finding them.

The goal of this paper is to address the following three questions.
First, how should this pumping-energy functional be defined when the coefficient is unbounded or may vanish,
so that the classical $H^1_0$ formulation of \eqref{eq:E0ell} is no longer available?
Second, when does the relaxed energy generate a well-posed $L^2$-gradient flow, and when
can that abstract flow be identified with a pointwise evolution law? Third,
what changes when the physical conductivity is a nonlinear function
$a=\mu(b)$ of the evolving variable? The last question is not a mere formal
reparametrization: it changes both the convexity of the pumping-energy
functional and the metric seen by the evolution. Our purpose is to answer these questions in a
single variational framework.

\subsection{The variational viewpoint and the source of degeneracy}

For a bounded coefficient satisfying $a\geq r>0$, problem
\eqref{eq:E0ell} is the Euler--Lagrange equation for the functional
\begin{equation}   \label{def:Ja}
   \J_a(\varphi):=2\int_\Omega f\varphi\,\d x
      -\int_\Omega a|\nabla\varphi|^2\,\d x.
\end{equation}
If $u$ is the solution of \eqref{eq:E0ell}, testing by $u$ gives
$\int_\Omega a|\nabla u|^2\,\d x=\int_\Omega fu\,\d x$ and hence
\begin{equation}\label{eq:intro-pumping-energy}
   \int_\Omega a|\nabla u|^2\,\d x
   =\sup_{\varphi\in H^1_0(\Omega)}\J_a(\varphi).
\end{equation}
The decisive feature of \eqref{eq:intro-pumping-energy} is that the right-hand
side is a supremum of affine, order-decreasing functions of $a$. Replacing
$H^1_0(\Omega)$ by $C_0^\infty(\Omega)$ therefore gives a
convex lower semicontinuous extended functional on $L^2(\Omega)$ even when
the classical solution in \eqref{eq:E0ell} is unavailable.
This construction is the central relaxation device used throughout the paper.

Several difficulties are hidden by the uniformly elliptic, bounded setting.
For $a\in L^2(\Omega)$, the weighted quadratic form need not be finite on all
of $H^1_0(\Omega)$; if $a$ vanishes, ordinary coercivity is also lost. Its
natural energy space consequently depends on $a$, which complicates both
compactness and differentiation with respect to the coefficient. Moreover,
the formal derivative of the pumping-energy functional is
$-|\nabla u[a]|^2$, an $L^2$ subgradient only when
$\nabla u[a]\in L^4(\Omega)$. The variational
formulation separates what follows from convexity alone from what needs this
additional elliptic regularity.

For a nonlinear conductivity $a=\mu(b)$ there is a second issue: composition
of the pumping-energy functional with $\mu$ does not preserve convexity in
general. Even in the special case in which the metabolic term is transformed
as well, so that
the full energy is $\E\circ\mu$, its Euclidean $L^2$-flow is not simply the
flow of $\E$ written in another coordinate. Formally, the Euclidean $b$-flow
has velocity
$b_t=-\mu'(b)\partial_a\E(\mu(b))$, whereas transforming
$a_t=-\partial_a\E(a)$ gives
$b_t=-\partial_a\E(\mu(b))/\mu'(b)$ wherever $\mu'(b)\ne0$.
The transformed flow is instead the gradient flow of $\E\circ\mu$ for the
pullback metric with tensor $G(b)=\mu'(b)^2$. This distinction motivates our
direct study of the energy in the variable $b$.

\subsection{Relation to earlier work}

The pumping--metabolic competition considered here is rooted in discrete
models of adaptive biological transportation networks. Hu and Cai formulated
the optimization principle and a conductivity adaptation driven by pressure
differences
\cite{HuCaiPRL,HuCaiCMS}. Continuum network equations and their analytical
properties were developed in \cite{HMP15,HKM:CMS:2019}; a rigorous
mesh-refinement limit on rectangular grids was obtained in
\cite{HKM:CommPDE:2019}, and tensor-valued extensions were studied in
\cite{HMP:CMS:2022}.
A closely related scalar functional was studied in
\cite{Zampini}, where it arises as a continuum limit of sparse (planar) networks and
its relaxation dynamics are related to a superquadratic $p$-Laplacian.
An extension of the problem which accounts for robustness of the transportation network
was introduced in \cite{HV:2024}.
Dense-graph limits lead to nonlocal graphon energies \cite{AH:2026, AH:arxiv}. 
Conductivity--pressure couplings also occur in
continuum Physarum dynamics aimed at Monge--Kantorovich equilibria
\cite{FaccaCardinPutti}; the evolution and dissipation mechanism there are
different from the Hilbert-space subgradient flow considered here.

The present paper develops an analytical relaxation theory of the local scalar
functional beyond the positive-background, linearly parametrized setting:
we admit genuinely degenerate coefficients in the elliptic equation,
introduce nonlinear conductivity maps, and identify mechanisms leading to both the
superquadratic and subquadratic $p$-Laplacian regimes.

There is also a close static connection with optimal-design problems and
optimal material or mass distribution, where relaxed or measure-valued
formulations are fundamental
\cite{KohnStrangI,BouchitteButtazzo,ButtazzoGelliLucic}. Our emphasis is
different: the ambient space is $L^2(\Omega)$, the principal object is the
subgradient evolution, and the coefficient-dependent elliptic problem is
incorporated through a common-core dual formula. Weighted Sobolev spaces and
the approximation of degenerate weights provide the natural elliptic
background for this construction \cite{Zhikov}.

\subsection{Main results and analytical methods}

Here we summarize the contributions of this paper in the order in which they are proved.
%The assumptions mentioned below are part of the statements; in particular,
%we do not claim an unconditional pointwise flow theory for every nonlinear mobility.

\begin{enumerate}[label=\textup{(\roman*)},leftmargin=*,itemsep=0.5em]
\item
For the strong cutoff $r>0$, we replace $a$ in \eqref{eq:E0ell} by $r+a$.
For every $a\in L^2_+(\Omega)$, equation \eqref{eq:E0ell} has a unique weighted energy
solution, and the common-core representation makes the resulting
pumping-energy functional $\F_r$ proper, convex, and lower semicontinuous on
$L^2(\Omega)$. We prove the dichotomy
\[
   \partial\F_r(a)=
   \begin{cases}
      -|\nabla u_r[a]|^2+N_+(a),&\nabla u_r[a]\in L^4(\Omega),\\
      \emptyset,&\nabla u_r[a]\notin L^4(\Omega).
   \end{cases}
\]
Here $N_+(a)$ denotes the normal cone to the nonnegative cone $L^2_+(\Omega)$
at $a$. After adding $\Gamma(a)=\int_\Omega g(a)\,\d x$, with
$g:[0,+\infty)\to[0,+\infty)$ convex and continuous at zero,
maximal-monotone operator theory gives a unique global $L^2$-gradient flow
for every nonnegative $L^2$ initial datum. Finite-energy data give a strong
solution and exact energy identity. If, in addition,
$g\in C^1([0,+\infty))$ and the two densities $|\nabla u_r[a]|^2$ and
$g'(a)$ belong separately to $L^2(\Omega)$, the abstract inclusion agrees
with the projected pointwise law.

\item
Convex conjugacy eliminates the conductivity from the stationary problem
and produces a pressure Hamiltonian. % involving $g_+^*(|\nabla u|^2)$.
Under the stated measurability, integrability, and saddle-point attainment
conditions, this yields the Fenchel relation between pressure and
conductivity. Linear costs produce a gradient constraint, while power costs
produce regularized $p$-Laplacian equations with $p>2$. Whenever the set of
minimizers is nonempty, the regularized gradient-flow trajectory converges
weakly in $L^2$ to a minimizer; uniqueness of the minimizer identifies the
limit.

\item
Next, we remove the strong cutoff at the level of the pumping-energy functional.
In the conductivity variable $b=r+a$, the lower-bound restricted functionals
Mosco-converge as $r\to 0^+$ to the common-core relaxation of
the pumping energy. This statement concerns the pumping-energy functionals themselves and is
not used to assert convergence of their gradient flows. For $d>1$, we also
introduce the weak cutoff $\int_\Omega a^{-d/2}\,\d x\leq K$. We prove that this
set is closed and convex and that it supplies a weighted
Sobolev--Poincar\'e inequality, hence existence and uniqueness of the
degenerate energy solution.

\item
For nonlinear mobilities, a formal second-variation calculation first
reveals the relevant curvature condition in the smooth uniformly elliptic
setting. If $\mu$ is concave, order monotonicity of the relaxed pumping-energy
functional then gives a rigorous convexity argument without differentiating the
pressure. In the degenerate case $\mu(0)=0$, we work on the inverse-moment
admissible set, establish lower semicontinuity, and compute directional
derivatives along comparable directions. Moreover, we characterize the full $L^2$ subdifferential and
the correponding pointwise gradient flow.
%under explicit density, chain-rule, integrability, Slater, tangent-cone, and subdifferential sum-rule assumptions.
For $0<\sigma\leq1<\lambda$, the choices $\mu(s)=s^\sigma/\sigma$ and $g(s)=s^\lambda/\lambda$ yield, at an admissible
stationary point, the $p$-Laplacian with $p=2\lambda/(\lambda-\sigma)>2$.

\item
The complementary regime is detected by the flux representation of the
pumping-energy functional. Under suitable assumptions,
concavity of $1/\mu$ implies concavity of the pumping-energy functional $I_\mu$
on the admissible set and hence convexity of  the extended
sign-reversed energy $-\frac12 I_\mu+\Gamma$.
A a direct flux certificate proves global minimality of supplied admissible stationary pairs.
For $\lambda>\kappa>0$, under the further hypotheses of Section~\ref{sec:ReciprocalConcave},
the inverse-power example $\mu(s)=s^{-\kappa}/\kappa$ leads formally at an admissible stationary point
to the $p$-Laplacian with $p=2\lambda/(\lambda+\kappa)\in(1,2)$.
A separate uniformly elliptic box restriction recovers an abstract $L^2$-subgradient flow, with the constraint
contribution retained whenever a pointwise identification is justified.
\end{enumerate}

The common tools behind these results are weighted energy completions, dual
representations of the pumping-energy functional, convex integral functionals,
normal-cone and
subdifferential calculus, the Br\'ezis--Komura theory of maximal-monotone
evolutions, Fenchel duality, and Mosco convergence. In the reciprocal
curvature regime, the additional tool is a flux-minimization
representation, which turns the coefficient dependence into an integral
involving $1/\mu(b)$.

\subsection{Organization and notation}

Section~\ref{sec:StrongCutoff} treats the strictly positive regularization,
its $L^2$-gradient flow, stationary states, and the degenerate Mosco limit.
Section~\ref{sec:WeakCutoff} develops the inverse-moment weak cutoff.
Section~\ref{sec:NonlinearMobility} derives the formal curvature tests for a
general nonlinear mobility. Section~\ref{sec:ConcaveMobility} studies concave
mobilities and the superquadratic power-law examples, while
Section~\ref{sec:ReciprocalConcave} treats concave reciprocal mobilities and
the subquadratic examples.

We write $\Rext :=\R\cup\{+\infty\}$. For $q\in[1,+\infty]$, we set
\[
   L^q_+(\Omega):=\{v\in L^q(\Omega):v\geq0\ \text{a.e. in }\Omega\}.
\]

%%%%%%%%%%%%%%%%%%%%%%%%%%%%%%%%%
%% Strong cutoff
%%%%%%%%%%%%%%%%%%%%%%%%%%%%%%%%%
\section{The regularized problem---strong cutoff}\label{sec:StrongCutoff}
We begin with a strictly positive background conductivity. This is the
setting in which the variational strategy can be developed without loss of
coercivity, while still allowing the variable part of the coefficient to be
unbounded. The results of this section provide the reference problem for the
degenerate and nonlinear theories that follow. We first construct the
weighted energy solution, then establish the convexity and lower
semicontinuity of the pumping-energy functional, identify its $L^2$
subdifferential, and finally generate the corresponding gradient flow.

For $r>0$, consider the regularized version of \eqref{eq:E0ell},
\begin{equation}  \label{eq:ur}
\begin{aligned}
 -\operatorname{div}\left((r+a)\nabla u\right) = f \qquad \text{in } & \Omega,\\
   u = 0 \qquad \text{on } & \partial\Omega,
 \end{aligned}
\end{equation}
with given $a\in L^2_+(\Omega)$ and regularization parameter $r>0$.

\subsection{The weighted energy solution}

Since the coefficient $r+a$ in \eqref{eq:ur} is, in general, not bounded from above,
the natural state space for the solution is the completion for the weighted
Dirichlet norm. % rather than all of $H^1_0(\Omega)$.
In that space the pressure is characterized as follows.

For fixed $r>0$ and $a\in L^2_+(\Omega)$, equip $C_0^\infty(\Omega)$ with the
scalar product
\[
   (\varphi,\psi)_{r+a}:=
   \int_\Omega(r+a)\nabla\varphi\cdot\nabla\psi\,\d x
\]
and denote by $\Ha{r+a}$ its completion with respect to the induced norm
$\Na{r+a}{\varphi}:=(\varphi,\varphi)_{r+a}^{1/2}$. %Since $r+a\geq r>0$,
Obviously, this space embeds continuously into $H^1_0(\Omega)$.

\begin{lemma}\label{lem:ur}
Let $r>0$ and $f\in L^2(\Omega)$. Then, for every $a\in L^2_+(\Omega)$,
the Dirichlet problem \eqref{eq:ur} has a unique energy solution
$u=u_r[a]\in\Ha{r+a}$, %\hookrightarrow H^1_0(\Omega)$, 
characterized by
\begin{equation}\label{eq:urweak-statement}
   \int_\Omega (r+a)\nabla u_r[a]\cdot\nabla\varphi\,\d x
   =\int_\Omega f\varphi\,\d x
   \qquad\text{for every }\varphi\in\Ha{r+a}.
\end{equation}
Moreover, there exists a constant $C=C(\Omega, r)$, independent of $a$, such that
\begin{equation}    \label{est:ur}
   \Norm{\nabla u_r[a]}_{L^2(\Omega)} \leq C \Norm{f}_{L^2(\Omega)}\qquad
   \mbox{and}\qquad
   \Norm{\sqrt{a} \, \nabla u_r[a]}_{L^2(\Omega)} \leq C \Norm{f}_{L^2(\Omega)}.
\end{equation}
\end{lemma}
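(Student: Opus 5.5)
The plan is to apply the Riesz representation theorem in the Hilbert space $\Ha{r+a}$. First I will record the embedding mentioned before the lemma. For $\varphi\in C_0^\infty(\Omega)$ we have $r\Norm{\nabla\varphi}_{L^2}^2\le \Na{r+a}{\varphi}^2$. Hence Cauchy sequences for $\Na{r+a}{\cdot}$ are Cauchy in $H^1_0(\Omega)$, and the completion maps continuously into $H^1_0(\Omega)$. I also need this map to be injective, so that elements of $\Ha{r+a}$ are genuine functions with gradients. To see this, take a Cauchy sequence $\varphi_n$ converging to $0$ in $H^1_0$. Then $\sqrt{r+a}\,\nabla\varphi_n$ is Cauchy in $L^2$, so it has a limit $G$. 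Along a subsequence $\nabla\varphi_n\to0$ a.e., hence $G=0$ a.e. and the class of $(\varphi_n)$ is zero. As a consequence, the norm on $\Ha{r+a}$ equals $\int(r+a)|\nabla\varphi|^2$, and the identities below make sense with $\nabla u\in L^2$ and $\sqrt{a}\nabla u\in L^2$.

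Next I will show that $\ell(\varphi):=\int_\Omega f\varphi\,\d x$ is a bounded linear functional on $\Ha{r+a}$. By Poincaré's inequality on the bounded domain, $\Norm{\varphi}_{L^2}\le C_P\Norm{\nabla\varphi}_{L^2}$. Therefore
\[
|\ell(\varphi)|\le \Norm{f}_{L^2}C_P\Norm{\nabla\varphi}_{L^2}\le C_P r^{-1/2}\Norm{f}_{L^2}\Na{r+a}{\varphi}
\]
on the dense core $C_0^\infty(\Omega)$. The same bound extends to the completion, because $\varphi\mapsto\varphi$ into $L^2$ is continuous by the embedding. Riesz's theorem then gives a unique $u\in\Ha{r+a}$ with $(u,\varphi)_{r+a}=\ell(\varphi)$ for all $\varphi$, which is exactly \eqref{eq:urweak-statement}. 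Uniqueness within $\Ha{r+a}$ is part of the Riesz statement; equivalently, the difference $w$ of two solutions satisfies $\Na{r+a}{w}=0$. Testing by $\varphi=u$ gives the standard distributional meaning of \eqref{eq:ur} via test functions in $C_0^\infty$.

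For the estimates I will test \eqref{eq:urweak-statement} with $\varphi=u$:
\[
\int_\Omega(r+a)|\nabla u|^2\,\d x=\int_\Omega fu\,\d x\le C_P\Norm{f}_{L^2}\Norm{\nabla u}_{L^2}.
\]
Since the left side dominates $r\Norm{\nabla u}_{L^2}^2$, this gives $\Norm{\nabla u}_{L^2}\le C_P r^{-1}\Norm{f}_{L^2}$. Substituting back yields $\Norm{\sqrt{a}\nabla u}_{L^2}^2\le C_P^2r^{-1}\Norm{f}_{L^2}^2$. Both constants depend only on $\Omega$ and $r$, not on $a$, so one may take $C=\max(C_Pr^{-1},C_Pr^{-1/2})$.

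The only genuinely delicate point is the injectivity of the completion into $H^1_0(\Omega)$, equivalently the identification of the abstract norm with the weighted integral. This is where I use the a.e.-subsequence argument above. Everything else is routine Lax–Milgram/Riesz.
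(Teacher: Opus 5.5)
Your proposal is correct and follows the paper's proof. Poincar\'e's inequality together with $r+a\geq r$ makes $\varphi\mapsto\int_\Omega f\varphi\,\d x$ bounded on $\Ha{r+a}$, Riesz gives existence and uniqueness, and testing with $u$ yields the estimates with the same constants $C_\Omega r^{-1}$ and $C_\Omega r^{-1/2}$. Your closability argument for the injectivity of $\Ha{r+a}\to H^1_0(\Omega)$ supplies a step the paper simply calls obvious; note also that the distributional meaning comes from testing with $\varphi\in C_0^\infty(\Omega)$, not from ``testing by $\varphi=u$'' as your text says.
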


\begin{proof}
For every $\varphi\in\Ha{r+a}$, the Poincar\'e inequality and the lower bound
$r+a\geq r$ give
\begin{align*}
   \left| \int_\Omega f \varphi \,\d x \right|
      &\leq \Norm{f}_{L^2(\Omega)} \Norm{\varphi}_{L^2(\Omega)} \\
      &\leq C_\Omega \Norm{f}_{L^2(\Omega)} \Norm{\nabla\varphi}_{L^2(\Omega)} \\
      &\leq \frac{C_\Omega}{\sqrt{r}}\Norm{f}_{L^2(\Omega)}\Na{r+a}{\varphi}.
\end{align*}
Here $C_\Omega$ is the Poincar\'e constant associated with the domain $\Omega$.
Consequently, the map $\varphi\mapsto\int_\Omega f\varphi\,\d x$ is a continuous linear functional
on $\Ha{r+a}$. The Riesz representation theorem then provides a unique
$u=u_r[a]\in\Ha{r+a}$ such that
\begin{equation}   \label{eq:urweak}
    (u,\varphi)_{r+a} =  \int_\Omega f \varphi \,\d x \qquad\mbox{for all } \varphi\in \Ha{r+a}.
\end{equation}
Thus $u=u_r[a]$ is the energy solution of the Dirichlet problem \eqref{eq:ur} in the sense of
\eqref{eq:urweak-statement}. In particular, the equation holds distributionally because
$C_0^\infty(\Omega)\subset\Ha{r+a}$.

Moreover, choosing $\varphi:=u$ in \eqref{eq:urweak} gives
\[
    \Na{r+a}{u}^2  = \int_\Omega f u \,\d x \leq \frac{C_\Omega}{\sqrt{r}}  \Norm{f}_{L^2(\Omega)}  \Na{r+a}{u},
\]
so that
\[
   \Na{r+a}{u}  \leq \frac{C_\Omega}{\sqrt{r}}  \Norm{f}_{L^2(\Omega)},
\]
from which \eqref{est:ur} directly follows.
\end{proof}

\subsection{The pumping-energy functional and its subdifferential}

With the pressure well defined, we now eliminate the elliptic constraint and
view the pumping energy as a functional of the conductivity alone. Define
$\F_r:L^2(\Omega)\to\Rext$ by
\begin{equation}   \label{def:Fr}
   \F_r(a) :=
\begin{cases}
   \displaystyle\int_{\Omega} (r+a)\, |\nabla u_r[a]|^2\, \d x,
& \text{if } a\ge 0 \text{ a.e. in } \Omega,\\[3mm]
+\infty, & \text{otherwise,}
\end{cases}
\end{equation}
where $u_r[a] \in \Ha{r+a}$ is the unique energy solution of \eqref{eq:ur}
provided by Lemma \ref{lem:ur}.

The extension by $+\infty$ incorporates the nonnegativity constraint directly
into the functional.
We now derive a variational representation of \eqref{def:Fr}.

\begin{lemma}\label{lem:Frrepr}
For any $a\in L^2_+(\Omega)$ we have
\begin{equation}   \label{eq:Frrepr}
     \F_r(a)   = \sup_{\varphi\in C_0^\infty(\Omega)}   \left(2\int_\Omega f\varphi\,\d x   -\int_\Omega(r+a)|\nabla\varphi|^2\,\d x\right).
\end{equation}
\end{lemma}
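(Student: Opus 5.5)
The plan is to compare $\J^r_a(\varphi):=2\int_\Omega f\varphi\,\d x-\int_\Omega(r+a)|\nabla\varphi|^2\,\d x$ with the energy solution $u=u_r[a]$ by completing the square in the Hilbert space $\Ha{r+a}$. Since $C_0^\infty(\Omega)$ is dense in $\Ha{r+a}$ by construction, the supremum over $C_0^\infty(\Omega)$ should equal the supremum over the whole completion. On the completion the functional has an explicit maximizer, namely $u$ itself.

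First I extend $\J^r_a$ to all of $\Ha{r+a}$ by the formula $\J^r_a(\varphi)=2\int_\Omega f\varphi\,\d x-\Na{r+a}{\varphi}^2$. This makes sense because $\Ha{r+a}$ embeds continuously into $H^1_0(\Omega)$, and $\varphi\mapsto\int_\Omega f\varphi\,\d x$ is continuous on $\Ha{r+a}$, as shown in the proof of Lemma~\ref{lem:ur}. The weighted norm $\Na{r+a}{\cdot}$ on the completion is the limit of $\int(r+a)|\nabla\varphi_n|^2$. I will also check, via a.e. convergent subsequences of gradients, that it coincides with $\int(r+a)|\nabla\varphi|^2$ for the $H^1_0$ representative. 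This identification is the only mildly delicate point.

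Next I use \eqref{eq:urweak-statement} with test function $\varphi$ to write $\int_\Omega f\varphi\,\d x=(u,\varphi)_{r+a}$. This gives
\[
\J^r_a(\varphi)=2(u,\varphi)_{r+a}-\Na{r+a}{\varphi}^2=\Na{r+a}{u}^2-\Na{r+a}{u-\varphi}^2 .
\]
Testing with $\varphi=u$ shows $\Na{r+a}{u}^2=\int_\Omega(r+a)|\nabla u|^2\,\d x=\F_r(a)$. Hence $\J^r_a(\varphi)\le\F_r(a)$ for every $\varphi\in C_0^\infty(\Omega)$, with the deficit given exactly by $\Na{r+a}{u-\varphi}^2$.

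Finally, density of $C_0^\infty(\Omega)$ in $\Ha{r+a}$ provides $\varphi_n\in C_0^\infty(\Omega)$ with $\Na{r+a}{u-\varphi_n}\to0$. Then $\J^r_a(\varphi_n)\to\F_r(a)$, so the supremum equals $\F_r(a)$, and it is attained in the completion but in general not in $C_0^\infty(\Omega)$. I expect no real obstacle. The step needing care is the bookkeeping that the norm on the abstract completion is the integral $\int(r+a)|\nabla u|^2$ appearing in \eqref{def:Fr}, and this follows from Fatou's lemma together with the Cauchy property in the weighted norm.
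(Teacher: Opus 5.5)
Your proof is correct and follows essentially the paper's route. The paper also identifies $\F_r(a)$ with the supremum of the concave functional over $\Ha{r+a}$, attained at $u_r[a]$, and then passes to $C_0^\infty(\Omega)$ by density together with continuity of the weighted norm and of $\varphi\mapsto\int_\Omega f\varphi\,\d x$. Your identity $\J^r_a(\varphi)=\Na{r+a}{u}^2-\Na{r+a}{u-\varphi}^2$ makes explicit the maximization step the paper only asserts via the Euler equation, and your Fatou argument that the completion norm equals $\int_\Omega(r+a)|\nabla\varphi|^2\,\d x$ supplies a detail the paper leaves implicit.
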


\begin{proof}
For $a\in L^2_+(\Omega)$, the weak equation \eqref{eq:ur} is the Euler equation of the strictly concave functional
\[
   F_a(\varphi) :=  2\int_\Omega f\varphi\,\d x  -  \int_\Omega(r+a)|\nabla\varphi|^2\,\d x.
\]
We therefore have
\begin{equation} \label{eq:FrFa}
    \F_r(a)  = \sup_{\varphi\in \Ha{r+a}}  F_a(\varphi) \geq \sup_{\varphi\in C_0^\infty(\Omega)}  F_a(\varphi).
\end{equation}
Now, take $\varphi\in \Ha{r+a}$. Since $\Ha{r+a}$ is the completion of $C_0^\infty(\Omega)$ with respect to
the norm $\Na{r+a}{\cdot}$, there exists a sequence $(\varphi_n)_{n\in\N} \subset C_0^\infty(\Omega)$
converging to $\varphi$ in $\Ha{r+a}$. The continuity of the norm gives
\[
   \int_\Omega (r+a) |\nabla\varphi_n|^2 \d x \to  \int_\Omega (r+a) |\nabla\varphi|^2 \d x \qquad\mbox{as } n\to\infty.
\]
Since the functional $\varphi \mapsto \int_\Omega f\varphi\,\d x$ is continuous, we readily have
\[
   F_a(\varphi) = \lim_{n\to\infty} F_a(\varphi_n) \leq \sup_{\psi\in C_0^\infty(\Omega)} F_a(\psi),
\]
and thus
\[
   \sup_{\varphi\in \Ha{r+a}}  F_a(\varphi) \leq \sup_{\varphi\in C_0^\infty(\Omega)}  F_a(\varphi).
\]
Combined with \eqref{eq:FrFa} this yields \eqref{eq:Frrepr}.
\end{proof}

We stress that although the space $\Ha{r+a}$ depends on $a$,
the test functions $\varphi\in C_0^\infty(\Omega)$ in the representation \eqref{eq:Frrepr} do not.
This allows us to directly establish convexity and lower semicontinuity
of $\F_r$ in the ambient $L^2$ topology.

\begin{lemma}\label{lem:FrconvexLSC}
The functional $\F_r: L^2(\Omega) \to \Rext$ defined by \eqref{def:Fr} is convex and lower semicontinuous
on $L^2(\Omega)$.
\end{lemma}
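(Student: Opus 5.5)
The plan is to write $\F_r$ globally on $L^2(\Omega)$ as a supremum of continuous affine functionals. For $\varphi\in C_0^\infty(\Omega)$ define
\[
   \Lambda_\varphi(a):=2\int_\Omega f\varphi\,\d x-\int_\Omega(r+a)|\nabla\varphi|^2\,\d x,\qquad a\in L^2(\Omega).
\]
Since $|\nabla\varphi|^2\in L^\infty(\Omega)\subset L^2(\Omega)$ and $\Omega$ is bounded, each $\Lambda_\varphi$ is affine and continuous on $L^2(\Omega)$. The nonnegativity constraint is encoded by the indicator $\chi_+$ of the cone $L^2_+(\Omega)$, which is $0$ on $L^2_+(\Omega)$ and $+\infty$ elsewhere. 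This cone is convex, and it is closed in $L^2$ because an $L^2$-convergent sequence has an a.e. convergent subsequence, which preserves $a\ge0$ a.e. Hence $\chi_+$ is convex and lower semicontinuous.

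By Lemma~\ref{lem:Frrepr} and the definition \eqref{def:Fr}, we get
\[
   \F_r(a)=\sup_{\varphi\in C_0^\infty(\Omega)}\Lambda_\varphi(a)+\chi_+(a)\qquad\text{for all }a\in L^2(\Omega).
\]
On $L^2_+(\Omega)$ this is exactly \eqref{eq:Frrepr}. Off the cone both sides equal $+\infty$. The second term is infinite there, and the first term is bounded below by $\Lambda_0(a)=-r|\Omega|$, so it never equals $-\infty$ and the sum is well defined.

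A pointwise supremum of continuous affine functionals is convex and lower semicontinuous, since its epigraph is an intersection of closed half-spaces. Adding the convex lower semicontinuous function $\chi_+$ preserves both properties. This gives the claim, and also weak lower semicontinuity, because the functional is convex.

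There is no real obstacle. The only points needing care are continuity of $\Lambda_\varphi$ in $a$, which is exactly where the fixed, $a$-independent test space $C_0^\infty(\Omega)$ is essential, and the closedness of $L^2_+(\Omega)$. Properness follows as well: by \eqref{est:ur}, $\F_r(a)<\infty$ for every $a\ge0$, and $\F_r>-\infty$ everywhere.
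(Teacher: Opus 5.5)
Your proof is correct and follows the paper's argument: by Lemma~\ref{lem:Frrepr}, $\F_r$ is a supremum over the $a$-independent core $C_0^\infty(\Omega)$ of continuous affine functionals of $a$, plus the indicator of the closed convex cone $L^2_+(\Omega)$. One harmless slip: $\Lambda_0(a)=0$, not $-r|\Omega|$, since $\nabla 0=0$; the bound you actually need, that the supremum never equals $-\infty$, still holds.
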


\begin{proof}
For each fixed $\varphi\in C_0^\infty(\Omega)$, the expression in parentheses in \eqref{eq:Frrepr}
is continuous and affine as a function of $a\in L^2_+(\Omega)$. Hence its supremum is
convex and lower semicontinuous. Adding the indicator function of the closed convex cone $L^2_+(\Omega)$
preserves convexity and lower semicontinuity and yields the extended functional \eqref{def:Fr}.
\end{proof}

The same variational identity also has a useful dual interpretation. For
fixed $r>0$ and $a\in L^2_+(\Omega)$, define
$\mathcal H_{r+a}:L^2(\Omega)\to\Rext$ by
\[
   \mathcal{H}_{r+a} (\psi):=
   \begin{cases}
     \displaystyle \frac12 \int_\Omega(r+a)|\nabla\psi|^2\,\d x,
        & \text{if }\psi\in\Ha{r+a},\\[3mm]
     +\infty, & \text{otherwise.}
   \end{cases}
\]
Then $\frac12\F_r(a)$ is the $L^2$-Legendre dual of
$\mathcal{H}_{r+a}$ evaluated at $f$. Thus the representation \eqref{eq:Frrepr}
and the Legendre-dual formulation describe the same
elimination of the state variable.

Convexity and lower semicontinuity are sufficient for an abstract evolution
theory. To recover a pointwise evolution law, however, we must determine when
the pumping-energy functional has an $L^2$ subgradient. The normal-cone
contribution from the nonnegativity constraint is described, for
$a\in L^2_+(\Omega)$, by
\[
   N_+(a):=\left\{w\in L^2(\Omega):
      w\leq0\ \text{a.e. on }\{a=0\},\quad
      w=0\ \text{a.e. on }\{a>0\}\right\}.
\]

\begin{lemma}\label{lem:SubDiffFr}
The subdifferential of $\F_r: L^2(\Omega) \to \Rext$ at $a\in L^2_+(\Omega)$ is given by
\[
   \partial \F_r(a)=- |\nabla u_r[a]|^2+N_+(a)
\]
if $\nabla u \in L^4(\Omega)$, where $u=u_r[a]$ is the unique solution of \eqref{eq:ur}.
If $\nabla u \notin L^4(\Omega)$, then $\partial \F_r(a) = \emptyset$.
\end{lemma}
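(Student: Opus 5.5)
The proof rests on one key inequality, obtained from the variational representation of Lemma~\ref{lem:Frrepr}. Fix $a\in L^2_+(\Omega)$ and write $u=u_r[a]$. For any $b\in L^2_+(\Omega)$, choose the test function $\varphi=u$ in the supremum defining $\F_r(b)$. This is legitimate whenever $u\in\Ha{r+b}$; by density of $C_0^\infty(\Omega)$ it suffices that $\int_\Omega b|\nabla u|^2\,\d x<\infty$. Using $\F_r(a)=\int_\Omega fu\,\d x$ and $2\int_\Omega fu\,\d x-\int_\Omega (r+a)|\nabla u|^2\,\d x=\F_r(a)$, we get
\[
   \F_r(b)\ \geq\ \F_r(a)-\int_\Omega (b-a)|\nabla u|^2\,\d x .
\]
If $\int_\Omega b|\nabla u|^2\,\d x=+\infty$, the right-hand side is $-\infty$ and the inequality is trivial. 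A little care is needed in the approximation: take $\varphi_n\in C_0^\infty(\Omega)$ with $\varphi_n\to u$ in $\Ha{r+a+b}$, which is possible because $u\in\Ha{r+a}$ and $\int_\Omega b|\nabla u|^2\,\d x<\infty$. I expect to justify this by truncation and mollification, or by noting that $\Ha{r+a+b}\hookrightarrow\Ha{r+a}$ is consistent.

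\emph{Case $\nabla u\in L^4(\Omega)$.} Then $|\nabla u|^2\in L^2(\Omega)$, and the inequality says exactly that $-|\nabla u|^2\in\partial\F_r(a)$. Since $\F_r=\F_r+I_{L^2_+}$ and adding any $w\in N_+(a)$ gives $\int_\Omega w(b-a)\,\d x\leq 0$ for $b\geq0$, we obtain the inclusion $-|\nabla u|^2+N_+(a)\subset\partial\F_r(a)$.

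For the reverse inclusion, take $\xi\in\partial\F_r(a)$ and test with $b=a+t h$. Here $h\in L^\infty(\Omega)$, $t>0$, and we need $a+th\geq0$, so $h\geq0$ on $\{a=0\}$; arbitrary sign of $h$ is allowed on $\{a>\delta\}$ for small $t$. The main technical step is the upper bound
\[
   \F_r(a+th)\leq \F_r(a)-t\int_\Omega h|\nabla u|^2\,\d x+o(t).
\]
Alternatively, since $\F_r$ is a supremum of affine maps, its directional derivative equals $-\int_\Omega h|\nabla u_r[a]|^2\,\d x$; this follows from $u_r[a+th]\to u$ in $\Ha{r+a}$ as $t\to0$, for $h$ bounded. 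To prove this, write $\F_r(a+th)=F_{a+th}(u_t)$ with $u_t=u_r[a+th]$. Then $\F_r(a+th)\le F_a(u_t)-t\int_\Omega h|\nabla u_t|^2\,\d x\le\F_r(a)-t\int_\Omega h|\nabla u_t|^2\,\d x$. The convergence $u_t\to u$ follows from the weak formulations by a standard energy estimate using $\|h\|_\infty$. This gives $\int_\Omega \xi h\,\d x\leq -\int_\Omega h|\nabla u|^2\,\d x$ for all admissible $h$. Choosing $h$ of both signs on $\{a>0\}$ yields $\xi=-|\nabla u|^2$ a.e. there, and $h\geq0$ on $\{a=0\}$ yields $\xi\leq-|\nabla u|^2$ there. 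Hence $\xi\in-|\nabla u|^2+N_+(a)$.

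\emph{Case $\nabla u\notin L^4(\Omega)$.} The same argument applies with $h\in L^\infty_+(\Omega)$, which is admissible everywhere. Any $\xi\in\partial\F_r(a)$ would satisfy $\int_\Omega \xi h\,\d x\leq-\int_\Omega h|\nabla u|^2\,\d x$ for all bounded $h\geq0$, that is, $|\nabla u|^2\leq-\xi$ a.e. Since $-\xi\in L^2(\Omega)$, this forces $|\nabla u|^2\in L^2(\Omega)$, a contradiction; so $\partial\F_r(a)=\emptyset$.

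The main obstacle is the rigorous directional-derivative step, namely the convergence $u_r[a+th]\to u_r[a]$ in the $a$-dependent weighted spaces. The density issue (testing the supremum with $u$) is the secondary difficulty.
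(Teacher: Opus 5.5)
\textbf{Gap in the sufficiency step.} Your ``key inequality'' $\F_r(b)\geq\F_r(a)-\int_\Omega(b-a)|\nabla u|^2\,\d x$ carries the inclusion $-|\nabla u|^2+N_+(a)\subseteq\partial\F_r(a)$, and the justification you give for it does not work. You claim that $u\in\Ha{r+a}$ together with $\int_\Omega b|\nabla u|^2\,\d x<\infty$ places $u$ in $\Ha{r+a+b}$, by truncation and mollification or via $\Ha{r+a+b}\hookrightarrow\Ha{r+a}$. For unbounded $b$ this is false in general. The smooth-core space can be strictly smaller than the finite-energy space (a Lavrentiev/Zhikov gap). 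Mollification gives no control of $\int_\Omega b|\nabla(\rho_\varepsilon*u-u)|^2\,\d x$ when $b\notin L^\infty(\Omega)$, and the embedding points the wrong way.

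For example, take $d=3$ and $a=0$. Let $u$ be homogeneous of degree zero near the origin (smooth in the angular variable, cut off near $\partial\Omega$), equal to $1$ on a cone $K_+$ and to $0$ on the opposite cone $K_-$. Let $b=|x|^{-\alpha}\mathbf 1_{K_+\cup K_-}$ with $1<\alpha<3/2$. Then $u\in H^1_0(\Omega)$, $b\in L^2_+(\Omega)$ and $\int_\Omega b|\nabla u|^2\,\d x<\infty$. However, $\int_0^1\rho^{\alpha-2}\,\d\rho<\infty$, so smallness of $\int_{K_\pm}|x|^{-\alpha}|\nabla\varphi|^2\,\d x$ forces the cross-sectional averages of $\varphi$ over $K_\pm$ to stay nearly constant down to the origin. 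Smooth approximants of $u$ in the $(r+b)$-energy would therefore need $\varphi_n(0)\to1$ and $\varphi_n(0)\to0$ simultaneously.

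The repair is to use the hypothesis of the case you are in. If $\nabla u\in L^4(\Omega)$, then $u\in W^{1,4}_0(\Omega)$. Taking $\varphi_n\in C_0^\infty(\Omega)$ with $\varphi_n\to u$ in $W^{1,4}_0(\Omega)$ gives $\int_\Omega(r+b)|\nabla(\varphi_n-u)|^2\,\d x\leq\Norm{r+b}_{L^2(\Omega)}\Norm{\nabla(\varphi_n-u)}_{L^4(\Omega)}^2\to0$ for every $b\in L^2_+(\Omega)$. This is exactly how the paper proves the converse inclusion. You should drop the claim for general $b$ outside the $L^4$ case, which you neither need nor prove.

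\textbf{The rest is correct.} Your necessity argument and the emptiness claim are correct, and your route is somewhat leaner than the paper's. Because $r>0$, a bounded perturbation with $t\Norm{h}_{L^\infty(\Omega)}<r$ satisfies $(1-t\Norm{h}_{L^\infty(\Omega)}/r)(r+a)\leq r+a+th\leq(1+t\Norm{h}_{L^\infty(\Omega)}/r)(r+a)$. Hence $\Ha{r+a+th}=\Ha{r+a}$ with equivalent norms. This is what makes $F_a(u_t)\leq\F_r(a)$ legitimate, and you should state it. The energy estimate then gives $\Na{r+a}{u_t-u}=O(t)$, and indicator test functions localize the inequality.

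The paper instead uses convex-combination directions $\lambda(y-a)$ with $y\in L^\infty_+(\Omega)$, which are unbounded when $a$ is. It obtains the difference quotient $-\int_\Omega h\nabla u_n\cdot\nabla u_0\,\d x$ by cross-testing the two state equations, and passes to the limit through weak convergence in $H^1_0(\Omega)$. It then localizes with large bounded competitors and the choice $c=0$. Your one-sided bound from the supremum formula, combined with strong convergence, avoids both the cross-test and the weak-limit identification.
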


\begin{proof}
We first compute the directional derivative along feasible directions
generated by bounded nonnegative competitors. We then identify the limit of
the corresponding state solutions and finally localize the subgradient
inequality.

Let us fix $a_0\in L^2_+(\Omega)$ and for $q\in [1,+\infty]$ define the set of admissible directions
\[
    S^q(a_0) = %\bigcup_{\lambda>0} \lambda\left( L^2_+(\Omega) - \{a_0\} \right),
       \bigcup_{\lambda>0} \left\{ \lambda(a-a_0);\, a\in L^q_+(\Omega) \right\}.
\]
%Then $S^2(a_0)$ is the domain of the directional derivative
Consider the directional derivative
\[
   D\F_r(a_0)[h] =  \lim_{t\to 0+} \frac{ \F_r(a_0 + th) - \F_r(a_0)}{t}. 
\]
Note that since $\F_r$ is convex by Lemma \ref{lem:FrconvexLSC}, $D\F_r(a_0)[h]$
exists in $\R \cup \{\pm\infty\}$ for all $h\in L^2(\Omega)$, is a convex sublinear functional
and satisfies $D\F_r(a_0)[0] = 0$.

Let us now choose a sequence $(t_n)_{n\in\N}$ with $t_n\to0+$ as
$n\to+\infty$. Fix $h\in S^\infty(a_0)$, set $u_0:=u_r[a_0]$, and denote by
$u_n:=u_r[a_0+t_nh]\in\Ha{r+a_0+t_nh}$ the energy solution of
\eqref{eq:ur} corresponding to the conductivity $a_0+t_nh$.
We then have
\begin{align*}
   \frac{ \F_r(a_0 + t_n h) - \F_r(a_0)}{t_n}
      &= \int_\Omega h |\nabla u_n|^2 \d x
       + \int_\Omega (r+a_0)
         \frac{|\nabla u_n|^2 - |\nabla u_0|^2}{t_n} \d x \\
      &= \int_\Omega h |\nabla u_n|^2 \d x
       + \int_\Omega (r+a_0) (\nabla u_n + \nabla u_0) \cdot
         \frac{\nabla u_n - \nabla u_0}{t_n} \d x.
\end{align*}
By the definition of $S^\infty(a_0)$, we may write
$h=\lambda_0(y_0-a_0)$ with $\lambda_0>0$ and
$y_0\in L^\infty_+(\Omega)$. For sufficiently large $n$,
\[
   a_0+t_nh=(1-\lambda_0t_n)a_0+\lambda_0t_ny_0.
\]
%This identity, together with the boundedness of $y_0$ and the lower bound
%$r>0$, makes the two weighted energy norms comparable. Hence $u_n+u_0$ is an admissible cross-test.
Subtracting the weak equations for $u_n$ and $u_0$,
dividing by $t_n$, and testing by $u_n+u_0$, we obtain
\[
   \int_\Omega h \nabla u_n\cdot \nabla (u_n+u_0) \d x + \int_\Omega (r+a_0) \nabla(u_n + u_0) \cdot \frac{\nabla(u_n-u_0)}{t_n} \d x = 0.
\]
Consequently, we have
\[
   \frac{ \F_r(a_0 + t_n h) - \F_r(a_0)}{t_n} =  - \int_\Omega h \nabla u_n\cdot\nabla u_0 \,\d x.
\]
Estimate \eqref{est:ur} from Lemma~\ref{lem:ur}, together with the
Poincar\'e inequality, yields uniform boundedness of $(u_n)$ in
$H^1_0(\Omega)$.
Therefore, every subsequence of $u_n$ has a subsequence, not relabelled, such that
$u_n \rightharpoonup \tilde u$ weakly in $H^1_0(\Omega)$ as $n\to+\infty$.
To identify the limit, we use $\varphi\in C_0^\infty(\Omega)$ as a test function in \eqref{eq:ur} for $u_n$,
\[
   t_n \int_\Omega h\nabla u_n \cdot \nabla \varphi \,\d x + \int_\Omega (r+a_0) \nabla u_n \cdot \nabla \varphi \, \d x = \int_\Omega f \varphi \,\d x,
\]
and pass to the limit as $n\to+\infty$,
\[
   \int_\Omega (r+a_0) \nabla \tilde u \cdot \nabla \varphi \, \d x = \int_\Omega f \varphi \,\d x.
\]
The same convex-combination identity gives
$a_0\leq(1-\lambda_0t_n)^{-1}(a_0+t_nh)$ for all sufficiently large $n$.
Hence $(u_n)$ is bounded in $\Ha{r+a_0}$, and the weak limit belongs to this
energy space.
By uniqueness in $\Ha{r+a_0}$, we conclude that $\tilde u = u_0$.
Since every weakly convergent subsequence has the same limit, the whole
sequence converges weakly to $u_0$ in $H^1_0(\Omega)$.

Returning to the representation $h=\lambda_0(y_0-a_0)$, we find
\[
   \frac{ \F_r(a_0 + t_n h) - \F_r(a_0)}{t_n} = - \lambda_0 \int_\Omega (y_0 - a_0) \nabla u_n \cdot \nabla u_0 \d x.
\]
Using $u_n$ as a test function in \eqref{eq:ur} for $u_0$, we have
\begin{align*}
   \lim_{n\to+\infty} \int_\Omega (r+a_0) \nabla u_0 \cdot \nabla u_n \, \d x
     &= \lim_{n\to+\infty} \int_\Omega f u_n \,\d x \\
     &= \int_\Omega f u_0 \,\d x \\
     &= \int_\Omega (r+a_0)|\nabla u_0|^2\,\d x.
\end{align*}
Consequently, for every $h\in S^\infty(a_0)$,
\begin{align*}
   \lim_{n\to+\infty} \frac{ \F_r(a_0 + t_n h) - \F_r(a_0)}{t_n}
     &= - \lambda_0 \int_\Omega y_0 |\nabla u_0|^2 \d x
        + \lambda_0 \int_\Omega a_0 |\nabla u_0|^2 \d x \\
     &= -\int_\Omega h |\nabla u_0|^2 \d x.
\end{align*}
The directions generated by bounded competitors are dense in the contingent cone
\[
   T_{L^2_+(\Omega)}(a_0) = \left\{ h\in L^2(\Omega);\, h(x)\geq 0 \text{ almost everywhere on } \{a_0 = 0\} \right\}.
\]
The functional
\[
    h \mapsto - \int_{\Omega} h\, |\nabla u_0|^2 \, \d x
\]
with $u_0 = u_r[a_0]$ has a continuous extension from
$S^\infty(a_0)$ to $T_{L^2_+(\Omega)}(a_0)$ if and only if
$|\nabla u_0|^2 \in L^2(\Omega)$, i.e., if and only if $\nabla u_0 \in L^4(\Omega)$.
If $y\in\partial\F_r(a_0)$, the subgradient inequality tested against
directions generated by bounded competitors gives
\[
   \int_\Omega yh\,\d x\leq D\F_r(a_0)[h]
   \qquad\text{for all }h\in S^\infty(a_0).
\]
Set $z:=y+|\nabla u_0|^2$. For every $c\in L^\infty_+(\Omega)$, the preceding
inequality applied to $h=c-a_0$ gives
\[
   \int_\Omega z(c-a_0)\,\d x\leq0.
\]
Here $z\in L^1(\Omega)$ and $za_0\in L^1(\Omega)$, because
$y,a_0\in L^2(\Omega)$ and
$a_0|\nabla u_0|^2\in L^1(\Omega)$. Choosing arbitrarily large bounded
competitors supported where $z>0$ shows that $z\leq0$ almost everywhere. The
choice $c=0$ then gives $\int_\Omega za_0\,\d x\geq0$; since $z\leq0$ and
$a_0\geq0$, it follows that $za_0=0$ almost everywhere.
Therefore,
\[
   y = -|\nabla u_0|^2
   \qquad \text{on } \{a_0>0\},
\]
and
\[
   y \leq -|\nabla u_0|^2
   \qquad \text{on } \{a_0=0\}.
\]
Equivalently, $y+|\nabla u_0|^2\in N_+(a_0)$. This makes the role of the
$L^4$ regularity transparent.
Indeed, if $|\nabla u_0|^2\notin L^2(\Omega)$, this condition cannot be
satisfied by any $y\in L^2(\Omega)$. If $|\nabla u_0|^2\in L^2(\Omega)$, the
converse inclusion follows from the variational representation
\eqref{eq:Frrepr}, approximating $u_0$ in $W^{1,4}_0(\Omega)$. We conclude
\[
   \partial \F_r(a_0)=- |\nabla u_r[a_0]|^2+N_+(a_0).
\]
\end{proof}

We next incorporate the local metabolic cost. Let
$g:[0,+\infty)\to\R$ be convex and continuous at zero, and define
$\Gamma:L^2(\Omega)\to\Rext$ by
\begin{equation}\label{def:Gamma}
   \Gamma(a):=
   \begin{cases}
      \displaystyle\int_\Omega g(a(x))\,\d x,
         & a\in L^2_+(\Omega),\\[3mm]
      +\infty, & \text{otherwise.}
   \end{cases}
\end{equation}

%The elementary lower semicontinuity statement needed below is recorded for completeness.

\begin{lemma}\label{lem:GammaLSC}
The functional $\Gamma$ defined in \eqref{def:Gamma} is proper, convex and
lower semicontinuous on $L^2(\Omega)$.
\end{lemma}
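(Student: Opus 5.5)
Convexity follows directly from convexity of $g$, since the set $L^2_+(\Omega)$ is convex. The other two properties need some care, because $g$ is only defined on $[0,+\infty)$, is convex, and is continuous at zero. In particular $g$ may be unbounded below a priori, so the integral has to be given a meaning first. Convexity of $g$ gives an affine minorant $g(s)\geq \alpha+\beta s$ for all $s\geq0$; take for instance a supporting line at $s=1$. Continuity at zero means that $g$ is finite and continuous on all of $[0,+\infty)$, since a convex function is continuous in the interior of its domain. Since $\Omega$ is bounded and $a\in L^2\subset L^1$, the negative part of $g(a)$ is integrable. Hence $\int_\Omega g(a)\,\d x$ is well defined in $(-\infty,+\infty]$ for every $a\in L^2_+(\Omega)$. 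Properness follows by evaluating at $a\equiv0$: $\Gamma(0)=g(0)|\Omega|<+\infty$. Convexity is then the pointwise inequality $g(\theta a+(1-\theta)b)\leq\theta g(a)+(1-\theta)g(b)$ integrated over $\Omega$, together with the observation that $\Gamma=+\infty$ outside the convex set $L^2_+(\Omega)$.

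For lower semicontinuity, take $a_n\to a$ in $L^2(\Omega)$ and assume $\liminf_n\Gamma(a_n)<+\infty$; otherwise there is nothing to prove. Pass to a subsequence realizing the $\liminf$ and, after a further subsequence, converging a.e. Each $a_n$ with finite $\Gamma(a_n)$ lies in $L^2_+(\Omega)$, and this cone is closed, so $a\geq0$ a.e. Continuity of $g$ on $[0,+\infty)$ gives $g(a_n)\to g(a)$ a.e. Apply Fatou's lemma to the nonnegative functions
\[
   g(a_n)-\alpha-\beta a_n \geq 0 .
\]
The affine part converges, because $a_n\to a$ in $L^1(\Omega)$ and $\Omega$ is bounded. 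Subtracting it yields $\Gamma(a)\leq\liminf_n\Gamma(a_n)$.

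The only delicate point is the lower bound. If $\beta<0$, the minorant is not nonnegative, and Fatou cannot be applied to $g(a_n)$ directly. Using the shifted functions $g(a_n)-\alpha-\beta a_n$ together with the strong $L^1$ convergence of $a_n$ handles this. In the case $g\geq0$ emphasized in the introduction, one may simply take $\alpha=\beta=0$.
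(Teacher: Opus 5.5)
Your proof is correct and follows essentially the same route as the paper: properness by evaluating at $a=0$, convexity from the pointwise convexity of $g$, and lower semicontinuity via a subsequence realizing the $\liminf$ that converges almost everywhere, continuity of $g$ on $[0,+\infty)$, and Fatou's lemma applied to $g$ minus an affine minorant whose integral is continuous. Your explicit check that the negative part of $g(a)$ is integrable, so that $\Gamma$ is well defined in $(-\infty,+\infty]$, makes precise a point the paper leaves implicit.
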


\begin{proof}
Since $\Gamma(0)=|\Omega|g(0)<+\infty$, the functional is proper.
Convexity follows directly from the pointwise convexity of $g$. To prove lower
semicontinuity, let $a_n\to a$ in $L^2(\Omega)$. Since
$L^2_+(\Omega)$ is closed, it is enough to consider a sequence contained in
$L^2_+(\Omega)$. Passing first to a subsequence along which the relevant
$\liminf$ is attained and then, if necessary, to a further subsequence, we may
assume that $a_n\to a$ almost everywhere. Convexity gives continuity of $g$ on
$(0,+\infty)$ and, together with the assumed continuity at zero, yields
$g\in C([0,+\infty))$.
Therefore,
%$g$ is lower semicontinuous on $[0,+\infty)$, so that
\[
   \lim_{n\to+\infty}g(a_n(x))=g(a(x))
   \qquad\mbox{for almost all }x\in\Omega.
\]
First let us assume that $g\geq 0$ on $[0,+\infty)$.
Then by Fatou's lemma we have
\[
   \int_\Omega g(a)\d x \leq \liminf_{n\to+\infty} \int_\Omega g(a_n)\d x.
\]
If $g$ is not nonnegative, choose an affine supporting function
$\ell(s)=c_1s+c_0$ and apply the preceding argument to $g-\ell\geq0$.
The map $a\mapsto\int_\Omega\ell(a)\,\d x$ is continuous on $L^2(\Omega)$,
and the result follows.
\end{proof}

Note that if $g(s)\leq C_0+C_1s^p$ for some $p\geq1$, then
$L^2_+(\Omega)\cap L^p(\Omega)\subseteq\operatorname{dom}(\Gamma)$.
For the following explicit subdifferential formula, assume further that
$g\in C^1([0,+\infty))$. At every $a\in\operatorname{dom}(\Gamma)$ for which
$\nabla u_r[a]\in L^4(\Omega)$ and $g'(a)\in L^2(\Omega)$, the directional
derivative of the integral term is $\int_\Omega g'(a)h\,\d x$, and adding it to
the derivative computed in Lemma~\ref{lem:SubDiffFr} and repeating the same
localization argument gives
\begin{equation}   \label{eq:subdiff_exp}
   \partial(\F_r+\Gamma)(a)
   =- |\nabla u_r[a]|^2+g'(a)+N_+(a).
\end{equation}
No characterization is asserted here when either of these two $L^2$ regularity
conditions fails.
Thus the abstract generation of the flow and its pointwise identification
must be kept distinct.
%%%%%%%%%%%%%%%%%%%%%%%%%%%%%%%%%%%%%%%%%%%%%%%%%%%%%%%%%%%%
\subsection{Gradient flow generated by the regularized energy}
\label{sec:GradientFlowEr}
%%%%%%%%%%%%%%%%%%%%%%%%%%%%%%%%%%%%%%%%%%%%%%%%%%%%%%%%%%%%
The preceding convexity and subdifferential results place the evolution on
the fixed Hilbert space $L^2(\Omega)$. We set
\[
   \E_r(a):=\F_r(a)+\Gamma(a),
\]
where $\F_r$ and $\Gamma$ are defined in \eqref{def:Fr} and
\eqref{def:Gamma}, respectively. For every
$a\in\operatorname{dom}(\partial\E_r)$, we denote by
\begin{equation}\label{def:circ}
   \partial^\circ\E_r(a)
   :=\argmin_{\xi\in\partial\E_r(a)}\|\xi\|_{L^2(\Omega)}
\end{equation}
the element of minimal $L^2$ norm in the subdifferential.

If $\nabla u_r[a]\in L^4(\Omega)$ and $g'(a)\in L^2(\Omega)$, formula
\eqref{eq:subdiff_exp} identifies this minimal section explicitly. Setting
$h_r(a):=-|\nabla u_r[a]|^2+g'(a)$, we obtain
\begin{equation}\label{calc:circ}
   \partial^\circ\E_r(a)
   =
   \begin{cases}
   h_r(a),
      &\text{a.e. on }\{a>0\}\cup\bigl(\{a=0\}\cap\{h_r(a)<0\}\bigr),\\[0.6em]
   0,
      &\text{a.e. on }\{a=0\}\cap\{h_r(a)\geq0\}.
   \end{cases}
\end{equation}
The gradient flow can nevertheless be defined in an abstract way without the
additional regularity assumptions. It is the Cauchy problem
\begin{equation}\label{eq:GFabstract}
   \frac{\d a}{\d t}(t)+\partial\E_r(a(t))\ni0
   \qquad\mbox{for }t>0,
\end{equation}
subject to $a(0)=a^0$. The next theorem gives the well-posedness of this
evolution; the pointwise representation of its velocity will be recovered
subsequently under the assumptions needed for \eqref{calc:circ}.

\begin{theorem}[Existence and uniqueness of the regularized gradient flow]
\label{thm:GFexistence}
Let $r>0$, $f\in L^2(\Omega)$, and let $g:[0,+\infty)\to[0,+\infty)$ be
convex and continuous at zero.
Then for every initial datum $a^0 \in L^2_+(\Omega)$
there exists a unique function $a\in C([0,+\infty);L^2_+(\Omega))$
solving \eqref{eq:GFabstract} in the sense of maximal monotone operators.
%Moreover, $a(t)\in L^2_+(\Omega)$ for all $t\geq 0$.

If, in addition, $a^0\in\mathrm{dom}(\E_r)$, then
$a\in H^1(0,T;L^2(\Omega))$ for every $T>0$,
$a(t)\in\mathrm{dom}(\partial\E_r)$ for almost all $t >0$, and
\begin{equation}  \label{eq:circ}
   \frac{\d a}{\d t}(t)
      +\partial^\circ\E_r(a(t)) =0
   \qquad\text{for almost all }t>0,
\end{equation}
where $\partial^\circ\E_r(a)$ is defined in \eqref{def:circ}.
Moreover, we have the energy identity
\begin{equation}   \label{eq:energyidentity}
   \E_r(a(t))
      +\int_0^t \left\|\frac{\d a}{\d s}(s)\right\|_{L^2(\Omega)}^2  \,\d s
   = \E_r(a^0)
\end{equation}
for all $0 \leq t<+\infty$.
%Equivalently,
%\[  \E_r(a(t))  +\int_s^t \left\|\partial^\circ\E_r(a(\tau))\right\|_{L^2(\Omega)}^2 \,\d\tau  =  \E_r(a(s)).  \]
%Finally, the semigroup generated by $-\partial\E_r$ is contractive:
%if $a_1,a_2$ are two solutions with initial data $a_1^0,a_2^0\in L^2_+(\Omega)$,
%then
%\(   \label{eq:L2contraction}
%   \|a_1(t)-a_2(t)\|_{L^2(\Omega)} \leq \|a_1^0-a_2^0\|_{L^2(\Omega)} \qquad\text{for all }t\geq0.
%\)
\end{theorem}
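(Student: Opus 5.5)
The plan is to apply the Brézis--Kōmura theory of gradient flows of proper, convex, lower semicontinuous functionals on a Hilbert space, with $H=L^2(\Omega)$ and the functional $\E_r=\F_r+\Gamma$. The first step is to check the hypotheses. By Lemma~\ref{lem:FrconvexLSC} and Lemma~\ref{lem:GammaLSC}, both $\F_r$ and $\Gamma$ are convex and lower semicontinuous on $L^2(\Omega)$, so their sum is as well. For properness we use $a=0$. Indeed, $\F_r(0)=\int_\Omega r|\nabla u_r[0]|^2\,\d x<+\infty$ by \eqref{est:ur}, and $\Gamma(0)=|\Omega|g(0)<+\infty$. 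Because $g\ge0$ and $\F_r\ge0$ (take $\varphi=0$ in \eqref{eq:Frrepr}), the sum never takes the value $-\infty$. Consequently $\partial\E_r$ is maximal monotone on $L^2(\Omega)$ by Rockafellar's theorem.

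Next we identify the closure of the domain. Both $\F_r$ and $\Gamma$ equal $+\infty$ off $L^2_+(\Omega)$, so $\operatorname{dom}(\E_r)\subseteq L^2_+(\Omega)$. For the reverse inclusion of closures, we show that $L^\infty_+(\Omega)\subseteq\operatorname{dom}(\E_r)$. Since $\F_r$ is finite on all of $L^2_+(\Omega)$ by Lemma~\ref{lem:ur}, it suffices to note that $\Gamma$ is finite on $L^\infty_+$, because $g$ is continuous and hence bounded on compact intervals of $[0,+\infty)$. Since $L^\infty_+$ is dense in $L^2_+$, we get $\overline{\operatorname{dom}(\partial\E_r)}=\overline{\operatorname{dom}(\E_r)}=L^2_+(\Omega)$. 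Here we use the standard fact that $\operatorname{dom}\partial\varphi$ is dense in $\operatorname{dom}\varphi$ for proper convex lower semicontinuous $\varphi$.

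With the hypotheses in place, the Kōmura--Crandall--Pazy generation theorem yields, for every $a^0\in\overline{\operatorname{dom}(\partial\E_r)}=L^2_+(\Omega)$, a unique solution $a\in C([0,+\infty);L^2(\Omega))$ of \eqref{eq:GFabstract} in the sense of maximal monotone operators. This solution takes values in the domain closure, so $a(t)\in L^2_+(\Omega)$. Uniqueness follows from the $L^2$-contraction property, which comes from monotonicity. For the regularity statement we invoke Brézis' theorem for subdifferentials. When $a^0\in\operatorname{dom}(\E_r)$, the solution satisfies $\sqrt t\,a'\in L^2$ in general, and in fact $a'\in L^2(0,T;L^2(\Omega))$. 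Moreover, $t\mapsto\E_r(a(t))$ is absolutely continuous, $a(t)\in\operatorname{dom}(\partial\E_r)$ for a.e.\ $t$, and the right derivative obeys $a'(t)=-\partial^\circ\E_r(a(t))$; this is \eqref{eq:circ}.

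The energy identity \eqref{eq:energyidentity} follows from the chain rule for convex functionals along $H^1$ curves, namely $\frac{\d}{\d t}\E_r(a(t))=(\xi,a'(t))$ for every $\xi\in\partial\E_r(a(t))$, a.e. Taking $\xi=-a'(t)$ and integrating from $0$ to $t$ gives the identity. Continuity of $\E_r(a(\cdot))$ up to $t=0$ holds because $\E_r(a(t))$ is nonincreasing and lower semicontinuous.

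The main obstacle I anticipate is justifying that the finite-energy regularity holds globally in time, not just for $a^0\in\operatorname{dom}(\partial\E_r)$. I will rely on Brézis' result that $a'\in L^2(0,T;H)$ whenever $\varphi(a^0)<\infty$. That result needs $\varphi$ bounded below, which we have here since $\E_r\ge0$. Every other step is a direct verification of the hypotheses.
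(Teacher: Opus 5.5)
Your proposal is correct and follows essentially the same route as the paper. You establish that $\E_r$ is proper, convex and lower semicontinuous, show $\overline{\operatorname{dom}(\partial\E_r)}=\overline{\operatorname{dom}(\E_r)}=L^2_+(\Omega)$ by approximating with bounded competitors (the paper's truncations $\min\{a,k\}$), and then invoke Rockafellar's theorem, the Br\'{e}zis--Komura theorem, Br\'{e}zis' regularity for finite-energy data, and the convex chain rule. Your extra remarks ($\E_r\geq0$, and absolute continuity of $t\mapsto\E_r(a(t))$ up to $t=0$) only make explicit hypotheses that the paper's proof uses implicitly.
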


\begin{proof}
By Lemmas~\ref{lem:FrconvexLSC} and \ref{lem:GammaLSC},
$\E_r$ is convex and lower semicontinuous on $L^2(\Omega)$. Moreover,
$\E_r(0)<+\infty$, so it is proper.

We first identify the closure of its effective domain. Given
$a\in L^2_+(\Omega)$, set $a_k:=\min\{a,k\}$. Then
$a_k\to a$ in $L^2(\Omega)$, while $a_k\in L^\infty_+(\Omega)$,
$\F_r(a_k)<+\infty$ by Lemma~\ref{lem:ur}, and
$\Gamma(a_k)<+\infty$ because $g$ is finite on bounded intervals. Hence
\[
   \overline{\operatorname{dom}(\E_r)}^{\,L^2}
   =L^2_+(\Omega).
\]
For a proper convex lower semicontinuous functional on a Hilbert space,
\[
   \overline{\operatorname{dom}(\partial\E_r)}^{\,L^2}
   =\overline{\operatorname{dom}(\E_r)}^{\,L^2}.
\]
Rockafellar's theorem \cite{Rockafellar} implies that $\partial\E_r$ is
maximal monotone. The Br\'{e}zis--Komura theorem therefore gives a unique
semigroup solution for every $a^0\in L^2_+(\Omega)$; see
\cite[Theorem 11.7]{LecturesOT} and \cite{Brezis,Komura}. Since the semigroup
acts on $\overline{\operatorname{dom}(\partial\E_r)}=L^2_+(\Omega)$, the
solution remains nonnegative.

If $a^0\in\operatorname{dom}(\E_r)$, the standard regularity theory for
maximal-monotone evolutions shows that the solution is strong on every finite
interval and satisfies \eqref{eq:circ}.
%\[  \frac{\d a}{\d t}(t) = \partial^\circ\E_r(a(t)) \qquad\text{for almost all }t>0. \]
%and the element selected by the evolution is the minimal section $\partial^\circ\E_r(a(t))$.
Finally, the energy identity \eqref{eq:energyidentity} follows from the chain
rule for convex lower semicontinuous functionals,
\[
   \frac{\d}{\d t}\E_r(a(t))
      =
   \left\langle \partial^\circ\E_r(a(t)),
      \frac{\d a}{\d t}(t)
   \right\rangle_{L^2}
%      =  -\left\|\partial^\circ\E_r(a(t))\right\|_{L^2}^2.
         =  -\left\| \frac{\d a}{\d t}(t) \right\|_{L^2}^2.
\]
\end{proof}

Theorem~\ref{thm:GFexistence} is entirely variational and does not require a
pointwise formula for the subgradient. If the pressure and metabolic terms
have the additional spatial integrability appearing in
\eqref{eq:subdiff_exp}, the abstract evolution takes the following projected
form.

\begin{corollary}[Projected form of the regularized flow]
\label{cor:ProjectedGF}
Let the assumptions of Theorem \ref{thm:GFexistence} hold.
Assume in addition that $g\in C^1([0,+\infty))$.
Let $a^0\in\mathrm{dom}(\E_r)$, and let $a\in C([0,+\infty);L^2_+(\Omega))$ be the unique solution of
\eqref{eq:GFabstract}.
Suppose that, for almost all $t>0$,
\[
   g'(a(t))\in L^2(\Omega), \qquad \nabla u_r[a(t)]\in L^4(\Omega).
\]
Set $u(t):=u_r[a(t)]$.
Then, for almost all $t>0$ we have
\begin{equation}  \label{eq:GFidentified}
   \partial_t a
   =
   \begin{cases}
      |\nabla u|^2-g'(a),
         & \text{a.e. on }\{a>0\},\\[1mm]
      \bigl(|\nabla u|^2-g'(0)\bigr)_+,
         & \text{a.e. on }\{a=0\}.
   \end{cases}
\end{equation}
\end{corollary}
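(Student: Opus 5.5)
The plan is to combine the strong-solution statement of Theorem~\ref{thm:GFexistence} with the explicit subdifferential formula \eqref{eq:subdiff_exp} and its minimal section \eqref{calc:circ}.

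\textbf{Step 1: fix a good set of times.} Since $a^0\in\operatorname{dom}(\E_r)$, Theorem~\ref{thm:GFexistence} yields a null set $Z_1\subset(0,+\infty)$ such that, for every $t\notin Z_1$, we have $a(t)\in\operatorname{dom}(\partial\E_r)$, the derivative $\frac{\d a}{\d t}(t)$ exists in $L^2(\Omega)$, and \eqref{eq:circ} holds. The hypothesis of the corollary provides a second null set $Z_2$ off which $g'(a(t))\in L^2(\Omega)$ and $\nabla u_r[a(t)]\in L^4(\Omega)$. We work at a fixed $t\notin Z_1\cup Z_2$ and drop the time argument.

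\textbf{Step 2: identify the subdifferential.} At this $t$, formula \eqref{eq:subdiff_exp} applies. It rests on Lemma~\ref{lem:SubDiffFr} together with the directional derivative $\int_\Omega g'(a)h\,\d x$ of $\Gamma$ along $S^\infty(a)$ directions, and gives
\[
   \partial\E_r(a)=h_r(a)+N_+(a),\qquad h_r(a)=-|\nabla u|^2+g'(a).
\]
I expect this step to be the main point needing care, since the paper justifies \eqref{eq:subdiff_exp} only briefly.

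\begin{itemize}
\item \emph{Inclusion ``$\supseteq$''.} Take $\xi=h_r(a)+w$ with $w\in N_+(a)$. The subgradient inequality for $\F_r$ follows from Lemma~\ref{lem:SubDiffFr}. For $\Gamma$, convexity of $g$ gives $g(b)\ge g(a)+g'(a)(b-a)$ pointwise, and integrating yields the inequality for $\Gamma$. Summing the two, and using $\int_\Omega w(b-a)\,\d x\le0$ for $b\ge0$, gives $\xi\in\partial\E_r(a)$.
\item \emph{Inclusion ``$\subseteq$''.} Rerun the localization argument of Lemma~\ref{lem:SubDiffFr} with $z:=\xi-h_r(a)$ and competitors $c\in L^\infty_+(\Omega)$. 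The directional derivative of $\Gamma$ in the direction $c-a$ is justified by monotone convergence of the convex difference quotients. It is dominated above by $g(c)-g(a)$, which is integrable, and bounded below by $g'(a)(c-a)\in L^1$, using $g'(a)\in L^2$ and $g'$ bounded on bounded sets.
\end{itemize}

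\textbf{Step 3: compute the minimal-norm element.} The set $h_r(a)+N_+(a)$ is closed and convex, and its norm splits pointwise. On $\{a>0\}$ every element equals $h_r(a)$. On $\{a=0\}$ we minimize $|h_r(a)+w|$ over $w\le0$. If $h_r(a)<0$ the minimum is at $w=0$, giving $h_r(a)$. If $h_r(a)\ge0$ the minimum is at $w=-h_r(a)$, giving $0$. The resulting function is measurable and belongs to $L^2(\Omega)$, since its modulus is at most $|h_r(a)|$. This reproduces \eqref{calc:circ}.

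\textbf{Step 4: conclude.} Substituting into \eqref{eq:circ} gives $\partial_t a=-\partial^\circ\E_r(a)$. On $\{a>0\}$ this is $|\nabla u|^2-g'(a)$. On $\{a=0\}$, where $g'(a)=g'(0)$, it is $-\min\{h_r(a),0\}=\bigl(|\nabla u|^2-g'(0)\bigr)_+$. This is exactly \eqref{eq:GFidentified}.
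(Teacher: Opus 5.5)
Your proposal is correct and follows the paper's route: the paper's proof substitutes the minimal-section formula \eqref{calc:circ} into \eqref{eq:circ} and reads off the two cases, exactly as in your Step~4. Your Steps~2--3 re-derive \eqref{eq:subdiff_exp} and \eqref{calc:circ}, which the paper states before the corollary and takes as given, so your version is more detailed but not a different argument.
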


\begin{proof}
Set $h_r(a):=-|\nabla u_r[a]|^2+g'(a)$. By \eqref{eq:circ} and
\eqref{calc:circ},
\[
   \partial_t a=-h_r(a)=|\nabla u|^2-g'(a)
   \qquad\text{a.e. on }\{a>0\},
\]
whereas on $\{a=0\}$,
\[
   \partial_t a=\bigl(-h_r(a)\bigr)_+
      =\bigl(|\nabla u|^2-g'(0)\bigr)_+.
\]
This is \eqref{eq:GFidentified}.
\end{proof}

\subsection{Bounds induced by the metabolic cost}
The energy identity \eqref{eq:energyidentity} immediately yields global bounds whenever $g$ is
coercive. Assume that $g$ is nonnegative and that there exist constants
$C_0>0$, $C_1\geq0$, and $p\geq1$ such that
\begin{equation}   \label{eq:gcoercive}
   g(s)\geq C_0 s^p-C_1   \qquad\text{for all }s\geq0.
\end{equation}
Then, for every solution with $a^0\in\mathrm{dom}(\E_r)$, the energy identity \eqref{eq:energyidentity} gives
\[
   \E_r(a(t))\leq \E_r(a^0)   \qquad\text{for all }t\geq0.
\]
Since $\F_r\geq0$, we obtain
\[
   \int_\Omega g(a(t))\,\d x     \leq \E_r(a^0)   \qquad\text{for all }t\geq0.
\]
Combined with \eqref{eq:gcoercive}, this gives a uniform a priori bound on $\|a(t)\|_{L^p(\Omega)}$, $t\geq 0$.
In the particular case $g(s) = s^\gamma/\gamma$, we have
\[
   \Gamma(a(t))
      =
   \frac1\gamma \|a(t)\|_{L^\gamma(\Omega)}^\gamma,
\]
and therefore
\[
   \|a(t)\|_{L^\gamma(\Omega)}^\gamma  \leq   \gamma \E_r(a^0)  \qquad\text{for all }t\geq0.
\]
Since $g'(a)=a^{\gamma-1}$, we have $g'(a(t))\in L^2(\Omega)$ for all $t\geq 0$ whenever $1\leq\gamma\leq 2$.

%A sufficient, although not optimal, set of conditions ensuring $\nabla u_r[a]\in L^4(\Omega)$ is the following: assume that
%$\Omega$ is of class $C^{1,1}$, $d\leq4$, $f\in L^2(\Omega)$, and that $a(t)$ is bounded in $W^{1,\infty}(\Omega)$ on the time interval under
%consideration. Since $r+a(t)\geq r>0$, standard elliptic regularity then gives
%\[  u_r[a(t)]\in H^2(\Omega),  \]
%and therefore
%\[  \nabla u_r[a(t)]\in L^4(\Omega)  \]

%Gateaux differentiability...

\subsection{Conditional persistence of positivity}
The preceding estimates follow directly from the abstract gradient-flow
theory. A pointwise lower bound requires, in addition, the identified
evolution \eqref{eq:GFidentified} combined with a pointwise comparison
argument. Whenever \eqref{eq:GFidentified} is valid, the nonnegativity of
$|\nabla u|^2$ gives $\partial_t a\geq-g'(a)$. For almost every
$x\in\Omega$, let
$b=b(x,t)$ solve the scalar equation
\[
\begin{cases}
   \partial_t b=-g'(b),&b>0,\\
   \partial_t b=0,&b=0,\\
   b(x,0)=a^0(x).
\end{cases}
\]
Convexity of $g$ implies that $g'$ is nondecreasing. Thus, whenever comparison
is justified for the chosen representatives, the scalar equation is
order-preserving and gives
\[
   b(x,t)\leq a(x,t)
   \qquad\text{for almost every }x\in\Omega.
\]
Assume moreover that $g'(s)>0$ for $s>0$.
As long as $b(x,t)>0$, separation of variables gives
\[
   \int_{b(x,t)}^{a^0(x)}\frac{\d s}{g'(s)}=t.
\]
Consequently, $b(x,t)$ reaches zero in finite time if and only if
\[
   \int_0^{a^0(x)}\frac{\d s}{g'(s)}<+\infty.
\]
In particular, if $a^0\geq\alpha>0$ almost everywhere and
\[
   \int_0^\alpha\frac{\d s}{g'(s)}=+\infty,
\]
then $a(\cdot,t)>0$ almost everywhere for every $t>0$. This conclusion remains
conditional on the regularity and representative-level comparison required
above.

%%%%%%%%%%%%%%%%%%%%%%%%%%%%%%%%%%%%%
\subsection{Steady states of the gradient flow}\label{subsec:SSGF}

\begin{lemma}[Radial energy approximation]
\label{lem:radial-energy-approximation}
Let $\Omega\subset\R^d$ be a bounded Lipschitz domain
and let $\Phi:\R^d\to(-\infty,+\infty]$ be convex,
lower semicontinuous, radial and nondecreasing
with respect to $|\xi|$, with $\Phi(0)<+\infty$.
Then, for every $u\in H_0^1(\Omega)$ satisfying
\[
   \int_\Omega\Phi(\nabla u)\,\d x<+\infty,
\]
there exists a sequence $\varphi_n\in C_0^\infty(\Omega)$
such that
\begin{equation}\label{eq:radial-approximation}
   \varphi_n\to u\quad\text{in }H_0^1(\Omega),
   \qquad
   \int_\Omega\Phi(\nabla\varphi_n)\,\d x
   \to\int_\Omega\Phi(\nabla u)\,\d x.
\end{equation}
Moreover, the sequence can be chosen such that
\begin{equation}\label{eq:radial-approximation-L1}
   \nabla\varphi_n\to\nabla u\quad\text{a.e. in }\Omega,
   \qquad
   \Phi(\nabla\varphi_n)\to\Phi(\nabla u)
   \quad\text{in }L^1(\Omega).
\end{equation}
%No growth or doubling condition on $\Phi$ is required.
\end{lemma}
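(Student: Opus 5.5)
Write $\Phi(\xi)=\phi(|\xi|)$ with $\phi:[0,+\infty)\to(-\infty,+\infty]$ convex, lower semicontinuous and nondecreasing, and let $R:=\sup\{s:\phi(s)<+\infty\}\in(0,+\infty]$. The plan has two stages. First, a radial contraction $u_\theta:=\theta u$ with $\theta\in(0,1)$ pushes $\nabla u$ strictly into the interior of the effective domain. Second, a localized mollification produces smooth compactly supported approximants whose gradients stay in a slightly larger ball. Convexity with $\Phi(0)<+\infty$ gives the pointwise bound
\[
   \Phi(\theta\xi)\le \theta\Phi(\xi)+(1-\theta)\Phi(0).
\]
So $\Phi(\nabla u_\theta)$ is dominated by the integrable function $|\Phi(\nabla u)|+|\Phi(0)|$; note that $\Phi$ is bounded below by $\Phi(0)$, being radial and nondecreasing. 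Lower semicontinuity and monotonicity of $\phi$ give $\phi(\theta s)\to\phi(s)$ as $\theta\to1^-$ wherever $\phi(s)<+\infty$. Dominated convergence then yields $\Phi(\nabla u_\theta)\to\Phi(\nabla u)$ in $L^1$, while clearly $u_\theta\to u$ in $H^1_0$. Moreover $|\nabla u_\theta|\le\theta|\nabla u|$, and a.e.\ $|\nabla u|\le R$ (when $R<\infty$, with $|\nabla u|=R$ possible only if $\phi(R)<\infty$). Hence a.e.\ $|\nabla u_\theta|\le\theta R<R$, a region where $\phi$ is finite, continuous and bounded by $\phi(\theta' R)$ for any $\theta'\in(\theta,1)$.

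Fix $\theta$. I would approximate $v:=u_\theta$ by $\varphi\in C_0^\infty(\Omega)$ with $\varphi\to v$ in $H^1_0$ and $\|\nabla\varphi\|_{L^\infty}$-type control: $|\nabla\varphi|\le\theta'R$ a.e. If $R=+\infty$ this control is not available, and I would treat that case separately as follows. Take a standard approximation $\varphi_n\to v$ in $H^1_0$ with $\nabla\varphi_n\to\nabla v$ a.e. Then obtain $L^1$ convergence of $\Phi(\nabla\varphi_n)$ via Vitali: by Jensen, $\Phi(\nabla(\rho_\varepsilon*w))\le\rho_\varepsilon*\Phi(\nabla w)$ for a mollification, which gives equi-integrability. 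This Jensen domination argument also works for finite $R$, so I would use it uniformly. On a Lipschitz domain, the compact support is achieved by a partition of unity with local inward translations $x\mapsto x-\delta\nu_j$ into $\Omega$, following the standard proof of density of $C_0^\infty$ in $H^1_0$, before mollifying.

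The main obstacle is the cutoff and partition-of-unity step. Multiplying by cutoff functions $\eta_j$ introduces the terms $v\nabla\eta_j$, which spoil the pure convolution structure needed for Jensen and may push gradients outside $\{\phi<\infty\}$. To handle this, I would first, for $v\in H^1_0$ extended by zero to $\R^d$, apply the local inward shift to each localized piece $\eta_j v$. I would write $\nabla(\eta_j v)=\eta_j\nabla v+v\nabla\eta_j$ and use $\sum_j\eta_j=1$ together with convexity. Write the total gradient as a convex combination
\[
   \theta''\,\Big(\textstyle\sum_j\eta_j\,\tau_j\nabla v\Big)/\theta''+(1-\theta'')\,\big(\text{error}/(1-\theta'')\big),
\]
where $\tau_j$ denotes the local inward shift. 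The error $\sum_j(\tau_j v-v)\nabla\eta_j$ plus the shift mismatches tend to $0$ in $L^2$ and a.e. The spare room $\theta<\theta''<1$ from the contraction absorbs this: on the set where the error is small, $\Phi$ of the error part is near $\Phi(0)$. On the complementary set, whose measure vanishes, one truncates. Equi-integrability follows from the convexity bound $\Phi(\sum_j\eta_j\xi_j)\le\sum_j\eta_j\Phi(\xi_j)$ applied to the translated gradients, together with translation continuity in $L^1$ of $\Phi(\nabla v)$. Then mollify with Jensen and conclude by a diagonal argument in $(\theta,\delta,\varepsilon)$. Extracting a subsequence gives a.e.\ convergence of the gradients, and the integral convergence together with Vitali (or Scheffé, using $\Phi\ge\Phi(0)$) gives \eqref{eq:radial-approximation-L1}.
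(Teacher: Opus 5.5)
Your first stage (the contraction $u_\theta=\theta u$ with $\Phi(\theta\xi)\le\theta\Phi(\xi)+(1-\theta)\Phi(0)$), the Jensen bound under mollification, and the final Fatou/Scheff\'e step are sound. The gap is in the step that moves the support inside $\Omega$, which is the heart of the lemma. After the partition of unity you split $\nabla w=\theta''\,(A/\theta'')+(1-\theta'')\,\bigl(E/(1-\theta'')\bigr)$, but you use only that $E\to0$ in $L^2$ and a.e. That gives no control of $\int_\Omega\Phi\bigl(E/(1-\theta'')\bigr)\,\d x$. When $R<\infty$, the integrand equals $+\infty$ on $\{|E|>(1-\theta'')R\}$, and $L^2$-smallness does not force this set to be null. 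When $R=+\infty$ but $\Phi$ grows fast, the contribution of the set where $|E|$ is large is not bounded by the measure of that set. ``Truncating on the complementary set'' is not an admissible operation: $E=\nabla w-A$ is determined by $w$, and a gradient field cannot be modified on a set while remaining the gradient of a $C_0^\infty(\Omega)$ function. So neither the claim that gradients stay in a slightly larger ball nor the equi-integrability is established. In the gradient-constraint case (linear cost $g$), which the paper needs, your convexity bound is vacuous.

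The missing idea is to use the pointwise structure of $E$ rather than its $L^2$ size. Set $w:=\eta_0v+\sum_{j\ge1}\eta_j\,\tau_jv$ with $\tau_jv(x)=v(x-\delta e_j)$, i.e.\ shift $v$ rather than $\eta_jv$. Then $\sum_j\nabla\eta_j=0$ gives exactly $E=\sum_{j\ge1}(\tau_jv-v)\nabla\eta_j$. Since $|\tau_jv-v|(x)\le\int_0^\delta|\nabla v(x-se_j)|\,\d s$, $|E|$ is $C\delta$ times segment averages of $|\nabla v|$. Put $V:=\Phi-\Phi(0)$, which is convex with $V(0)=0$, so $V(c\zeta)\le cV(\zeta)$ for $c\in[0,1]$. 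Jensen along the segments plus Fubini then yields $\int_\Omega V\bigl(E/(1-\theta'')\bigr)\,\d x\le C\delta(1-\theta'')^{-1}\int_\Omega V(\nabla v)\,\d x\to0$, with no growth hypothesis. Your variant $\sum_j\tau_j(\eta_jv)$ produces an extra $O(\delta)|v|$ term, which needs a modular Poincar\'e inequality. For $R<\infty$ you can instead note that $u$ is $R$-Lipschitz, so $E=O(\delta)$ uniformly.

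The paper avoids cutoffs altogether. It uses one global inward diffeomorphism $T_\delta$ (Ern--Guermond) and the scalar factor $\theta_\delta=\max\{1,\|DT_\delta^{-T}\|_{L^\infty}\}^{-1}$, so that $|\nabla u_\delta(T_\delta x)|\le|\nabla u(x)|$ pointwise. Radial monotonicity then gives $\int_\Omega V(\nabla u_\delta)\,\d x\le(1+C\delta)\int_\Omega V(\nabla u)\,\d x$ directly, and Jensen under mollification plus Fatou completes the proof.
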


\begin{proof}
Since $\Phi$ is radial and nondecreasing,
$\Phi(\xi)\geq\Phi(0)$ for all $\xi\in\R^d$.
We may therefore replace $\Phi$ by
\[
   V(\xi):=\Phi(\xi)-\Phi(0),
\]
which is nonnegative, convex, lower semicontinuous
and satisfies $V(0)=0$.
We carry out the proof in three steps.
%The main difficulty is that ordinary approximation in $H_0^1(\Omega)$ does not necessarily preserve the integral of $V$,
%especially when $V$ has superquadratic growth or takes the value $+\infty$.
%We circumvent this by first moving the support of $u$ into the interior of $\Omega$ while ensuring that its gradient does not increase.

\smallskip
\noindent
\emph{Step 1. Inward deformation.}
By the geometric construction of
Ern and Guermond~\cite{ErnGuermond2016},
there exist smooth diffeomorphisms
$T_\delta:\R^d\to\R^d$, for sufficiently small
$\delta>0$, such that
\[
   T_\delta(\Omega)+B(0,c\delta)\subset\Omega,
   \qquad
   \|DT_\delta-I\|_{L^\infty(\R^d)}\leq C\delta,
\]
where $c,C>0$ are independent of $\delta$.
In particular, $T_\delta$ and its inverse
converge to the identity, together with their
first derivatives, as $\delta\to0$.

Set
\[
   M_\delta:=
   \max\left\{1,
   \|DT_\delta^{-T}\|_{L^\infty(\Omega)}\right\},
   \qquad
   \theta_\delta:=M_\delta^{-1}.
\]
Then $0<\theta_\delta\leq1$,
$\theta_\delta\to1$, and
\[
   \theta_\delta
   \|DT_\delta^{-T}\|_{L^\infty(\Omega)}\leq1.
\]
Define
\[
   u_\delta(y):=
   \begin{cases}
   \theta_\delta u(T_\delta^{-1}(y)),
      &y\in T_\delta(\Omega),\\
   0,&y\notin T_\delta(\Omega).
   \end{cases}
\]
Since $u\in H_0^1(\Omega)$, its zero extension
belongs to $H^1(\R^d)$. Hence $u_\delta\in H_0^1(\Omega)$,
and its support is contained in
$\overline{T_\delta(\Omega)}$, at distance
at least $c\delta$ from $\partial\Omega$.
Moreover, the near-identity properties imply
\begin{equation}\label{eq:radial-deformation-convergence}
   u_\delta\to u\quad\text{in }H_0^1(\Omega).
\end{equation}

For almost every $x\in\Omega$, the chain rule gives
\[
   \nabla u_\delta(T_\delta(x))
   =\theta_\delta DT_\delta(x)^{-T}\nabla u(x).
\]
Consequently,
\[
   |\nabla u_\delta(T_\delta(x))|
   \leq|\nabla u(x)|.
\]
By radial monotonicity of $V$, this implies
\[
   V(\nabla u_\delta(T_\delta(x)))
   \leq V(\nabla u(x)).
\]
Since $\det DT_\delta=1+\mathcal O(\delta)$
uniformly, a change of variables yields
\begin{equation}\label{eq:radial-deformation-energy}
   \int_\Omega V(\nabla u_\delta)\,\d x
   \leq(1+C\delta)\int_\Omega V(\nabla u)\,\d x.
\end{equation}

\smallskip
\noindent
\emph{Step 2. Mollification.}
Extend $u_\delta$ by zero to $\R^d$.
Let $\rho_\varepsilon$ be a standard nonnegative
mollifier and choose $0<\varepsilon<c\delta/2$.
Then
\[
   \varphi_{\delta,\varepsilon}
   :=\rho_\varepsilon*u_\delta
   \in C_0^\infty(\Omega).
\]
For fixed $\delta$, standard mollification gives
\[
   \varphi_{\delta,\varepsilon}\to u_\delta
   \quad\text{in }H^1(\Omega)
   \qquad\text{as }\varepsilon\to0.
\]
Moreover,
\[
   \nabla\varphi_{\delta,\varepsilon}
   =\rho_\varepsilon*\nabla u_\delta.
\]
By Jensen's inequality and convexity of $V$,
\[
   V(\nabla\varphi_{\delta,\varepsilon})
   \leq\rho_\varepsilon*V(\nabla u_\delta).
\]
Because the zero extension of $u_\delta$
has zero gradient outside $\Omega$ and $V(0)=0$,
integration gives
\[
   \int_\Omega
      V(\nabla\varphi_{\delta,\varepsilon})\,\d x
   \leq\int_\Omega V(\nabla u_\delta)\,\d x.
\]
Combining this with
\eqref{eq:radial-deformation-energy}, we obtain
\begin{equation}\label{eq:radial-mollified-energy}
   \int_\Omega
      V(\nabla\varphi_{\delta,\varepsilon})\,\d x
   \leq(1+C\delta)\int_\Omega V(\nabla u)\,\d x.
\end{equation}

Choose $\delta_n\to0$ and then
$\varepsilon_n<c\delta_n/2$ sufficiently small
that
\[
   \|\varphi_{\delta_n,\varepsilon_n}
         -u_{\delta_n}\|_{H^1(\Omega)}
   \leq\frac1n.
\]
Setting
$\varphi_n:=\varphi_{\delta_n,\varepsilon_n}$,
we conclude from
\eqref{eq:radial-deformation-convergence} that
$\varphi_n\to u$ in $H_0^1(\Omega)$, while
\eqref{eq:radial-mollified-energy} implies
\begin{equation}\label{eq:radial-energy-limsup}
   \limsup_{n\to\infty}
   \int_\Omega V(\nabla\varphi_n)\,\d x
   \leq\int_\Omega V(\nabla u)\,\d x.
\end{equation}

\smallskip
\noindent
\emph{Step 3. Convergence of the energies.}
After passing to a subsequence, we may assume
\[
   \nabla\varphi_n(x)\to\nabla u(x)
   \qquad\text{for a.e. }x\in\Omega.
\]
By lower semicontinuity of $V$,
\[
   V(\nabla u)
   \leq\liminf_{n\to\infty}
   V(\nabla\varphi_n)
   \qquad\text{a.e. in }\Omega.
\]
Fatou's lemma therefore yields
\[
   \int_\Omega V(\nabla u)\,\d x
   \leq\liminf_{n\to\infty}
      \int_\Omega V(\nabla\varphi_n)\,\d x.
\]
Together with \eqref{eq:radial-energy-limsup},
this proves convergence of the integrals.

Finally, since
\[
   \bigl(V(\nabla u)-V(\nabla\varphi_n)\bigr)_+
   \to0\qquad\text{a.e. in }\Omega
\]
and this quantity is bounded by
$V(\nabla u)\in L^1(\Omega)$, dominated
convergence gives
\[
   \int_\Omega
   \bigl(V(\nabla u)-V(\nabla\varphi_n)\bigr)_+
   \,\d x\to0.
\]
The convergence of the integrals then implies
that the positive parts with the opposite sign
also converge to zero in $L^1(\Omega)$.
Hence
\[
   \|V(\nabla\varphi_n)-V(\nabla u)\|_{L^1(\Omega)}
   \to0.
\]
Adding back the constant $\Phi(0)$ proves
\eqref{eq:radial-approximation} and
\eqref{eq:radial-approximation-L1}.
\end{proof}

\begin{remark}
Lemma~\ref{lem:radial-energy-approximation}
applies to spatially homogeneous radial integrands,
including those imposing pointwise gradient constraints.
It does not assert density of smooth functions in
arbitrary weighted Sobolev spaces with energies
of the form $\int_\Omega a(x)|\nabla u|^2\,\d x$.
In particular, it does not identify the smooth-core
weighted space with the maximal finite-energy space
for a general unbounded coefficient $a$.
\end{remark}

Since $\E_r$ is convex, a conductivity is a steady state of the abstract
gradient flow if and only if it is a global minimizer of $\E_r$.

Let $a_0$ be a steady state, so that $0\in\partial\E_r(a_0)$, and set
$u_0:=u_r[a_0]$ the solution of \eqref{eq:ur}. If $\nabla u_0\in L^4(\Omega)$ and
$g'(a_0)\in L^2(\Omega)$, then
\eqref{eq:subdiff_exp} gives the pointwise conditions
\begin{equation}  \label{eq:pointwiseStat}
   |\nabla u_0|^2=g'(a_0) \quad \text{almost everywhere in }\{a_0>0\},
\end{equation}
and
\[
   |\nabla u_0|^2 \leq g'(0) \quad \text{almost everywhere in }\{a_0=0\}.
\]
To formulate the same condition without requiring $|\nabla u_0|^2$ and
$g'(a_0)$ to belong separately to $L^2(\Omega)$, we absorb the nonnegativity
constraint into the metabolic integrand and use Fenchel duality.

We now denote the $\R^+$--$\R^+$ Legendre transform of $g$,
\[
   g_+^*(s) = \sup_{v\ge 0}\bigl(sv-g(v)\bigr), \qquad s\ge 0.
\]
The qualification condition here is that $g$ is convex and continuous on
$[0,\infty)$. Its role becomes clear through the saddle functional
\[
L(\varphi,a)
:=
\int_\Omega
\left(
\frac r2|\nabla\varphi|^2
+
\frac12\bigl(a|\nabla\varphi|^2-g(a)\bigr)
\right)\d x
-
\int_\Omega f\varphi\,\d x.
\]
Note that $L$ is strictly convex in the first argument $\varphi$ and concave in the second argument $a$.

\begin{proposition}
Let $r>0$ and let $g: [0,\infty) \to [0,\infty)$ be convex and continuous at zero.
Then
\[
   \inf_{a\in L^2_+(\Omega)} \E_r(a) \geq -2 \inf_{\varphi\in H^1_0(\Omega)} H(\varphi),
\]
where
\[
   H(\varphi) = \int_\Omega \left( \frac{r}{2} |\nabla\varphi|^2 + \frac12 g_+^*\left(|\nabla\varphi|^2 \right) - f\varphi \right) \,\d x.
\]
\end{proposition}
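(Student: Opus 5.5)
This is weak duality ("inf sup ≥ sup inf") for the saddle functional $L$. The plan is to show, for each fixed admissible $a$ and each $\varphi$, that
\[
\tfrac12\E_r(a)\;=\;\sup_{\varphi}\bigl(-L(\varphi,a)\bigr)\quad\text{and}\quad -L(\varphi,a)\;\geq\;-H(\varphi),
\]
and then combine the two with an infimum/supremum swap.

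\emph{Step 1: the identity $\frac12\E_r(a) = -\inf_{\varphi\in C_0^\infty(\Omega)}L(\varphi,a)$.} Fix $a\in L^2_+(\Omega)$ with $\E_r(a)<+\infty$; otherwise there is nothing to prove. Then $g(a)\in L^1(\Omega)$ and, by Lemma~\ref{lem:Frrepr},
\[
\tfrac12\F_r(a)=\sup_{\varphi\in C_0^\infty(\Omega)}\left(\int_\Omega f\varphi\,\d x-\frac12\int_\Omega (r+a)|\nabla\varphi|^2\,\d x\right).
\]
Subtracting $\frac12\int_\Omega g(a)\,\d x$ from both sides gives the claimed identity.

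\emph{Step 2: the pointwise bound $L(\varphi,a)\le H(\varphi)$.} By the definition of $g_+^*$, evaluated with $v=a(x)\geq0$ and $s=|\nabla\varphi(x)|^2$,
\[
a|\nabla\varphi|^2-g(a)\le g_+^*(|\nabla\varphi|^2)\qquad\text{a.e. in }\Omega .
\]
Integrating gives $L(\varphi,a)\le H(\varphi)$ for every $\varphi$, with the right-hand side possibly $+\infty$. The integral of $g_+^*(|\nabla\varphi|^2)$ is well defined in $[0,+\infty]$, because $g_+^*\ge -g(0)$ and $g_+^*$ is lower semicontinuous as a supremum of affine functions.

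\emph{Step 3: enlarging the test space from $C_0^\infty(\Omega)$ to $H^1_0(\Omega)$.} Combining Steps 1 and 2 gives $\frac12\E_r(a)\ge -\inf_{C_0^\infty(\Omega)}H$. To replace this by the infimum over $H^1_0(\Omega)$, I need $\inf_{C_0^\infty(\Omega)}H=\inf_{H^1_0(\Omega)}H$; the inequality "$\ge$" is trivial. For "$\le$", take $u\in H^1_0(\Omega)$ with $H(u)<\infty$ and apply Lemma~\ref{lem:radial-energy-approximation} with
\[
\Phi(\xi)=\frac r2|\xi|^2+\frac12 g_+^*(|\xi|^2).
\]
This $\Phi$ is radial and nondecreasing in $|\xi|$, since $g_+^*$ is nondecreasing. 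It is convex, since $g_+^*$ is convex and nondecreasing and $\xi\mapsto|\xi|^2$ is convex. It is lower semicontinuous, and $\Phi(0)=\frac12 g_+^*(0)=-\frac12\inf g<\infty$. The lemma yields $\varphi_n\to u$ in $H^1_0(\Omega)$ with $\int_\Omega\Phi(\nabla\varphi_n)\,\d x\to\int_\Omega\Phi(\nabla u)\,\d x$, and $\int_\Omega f\varphi_n\,\d x\to\int_\Omega fu\,\d x$. Hence $H(\varphi_n)\to H(u)$, which gives the equality of the two infima.

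\emph{Step 4: conclusion.} From Steps 1–3, for every $a\in L^2_+(\Omega)$ we have $\frac12\E_r(a)\ge -\inf_{H^1_0(\Omega)}H$. Multiplying by $2$ and taking the infimum over $a$ gives the statement.

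The main obstacle is the density step (Step 3). The radial approximation lemma handles it, provided $g_+^*$ may take the value $+\infty$; this occurs for linear costs and is permitted by the lemma. Everything else is the elementary Fenchel–Young inequality.
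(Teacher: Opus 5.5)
Your proof is correct and follows essentially the same route as the paper. Both apply Fenchel--Young to the common-core representation for smooth test functions, then extend to $H^1_0(\Omega)$ via Lemma~\ref{lem:radial-energy-approximation} with the same integrand $\frac r2|\xi|^2+\frac12 g_+^*(|\xi|^2)$; routing the argument through the saddle functional $L$ is only a repackaging. There are two cosmetic slips: in Step~1 you add (not subtract) $\frac12\int_\Omega g(a)\,\d x$, and the integral of $g_+^*(|\nabla\varphi|^2)$ lies in $(-\infty,+\infty]$ rather than $[0,+\infty]$, since $g_+^*$ is only bounded below by $-g(0)$.
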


\begin{proof}
For every $a\in L^2_+(\Omega)$ with
$\E_r(a)<+\infty$ and every
$\varphi\in C_0^\infty(\Omega)$,
the common-core representation of the pumping
energy and the Fenchel--Young inequality give
\begin{align*}
   \E_r(a)
   &\geq
   2\int_\Omega f\varphi\,\d x
   -\int_\Omega(r+a)|\nabla\varphi|^2\,\d x
   +\int_\Omega g(a)\,\d x\\
   &\geq
   2\int_\Omega f\varphi\,\d x
   -\int_\Omega r|\nabla\varphi|^2\,\d x
   -\int_\Omega g_+^*(|\nabla\varphi|^2)\,\d x\\
   &=-2H(\varphi).
\end{align*}
The inequality is trivial when $\E_r(a)=+\infty$.

To extend it to arbitrary $\varphi\in H_0^1(\Omega)$
with $H(\varphi)<+\infty$, consider
\[
   \Phi_r(\xi):=
   \frac r2|\xi|^2+\frac12g_+^*(|\xi|^2).
\]
Since $g$ is finite, convex, nonnegative and
continuous at zero, $\Phi_r$ is convex,
lower semicontinuous, radial and nondecreasing
in $|\xi|$, with $\Phi_r(0)<+\infty$.
By Lemma~\ref{lem:radial-energy-approximation},
there exist $\varphi_n\in C_0^\infty(\Omega)$
such that
\[
   \varphi_n\to\varphi\quad\text{in }H_0^1(\Omega),
   \qquad
   \int_\Omega\Phi_r(\nabla\varphi_n)\,\d x
   \to\int_\Omega\Phi_r(\nabla\varphi)\,\d x.
\]
Since $f\in L^2(\Omega)$, this implies
$H(\varphi_n)\to H(\varphi)$.
Passing to the limit yields
$\E_r(a)\geq-2H(\varphi)$.
Taking the infimum over $a$ and $\varphi$
proves the assertion.
\end{proof}

Assume now that $(u_0,a_0)$ is a saddle point of $L$, with $a_0\in L^2_+(\Omega)$ and $u_0 \in H_0^1(r +a_0)$.
Stationarity in the pressure variable gives
\begin{equation}\label{eq:saddle-pressure}
   \int_\Omega(r+a_0)\nabla u_0\cdot\nabla v\,\d x
   =\int_\Omega fv\,\d x
   \qquad \mbox{for all } v\in\Ha{r+a_0}.
\end{equation}
The maximization in the conductivity variable is equivalent to the
Fenchel relation
\begin{equation}\label{eq:saddle-conductivity}
   |\nabla u_0|^2\in\partial g_+(a_0)
   \qquad\Longleftrightarrow\qquad
   a_0\in\partial g_+^{*}(|\nabla u_0|^2)
   \quad\text{a.e. in }\Omega.
\end{equation}
If $g\in C^1([0,+\infty))$, this relation becomes
the pointwise stationarity condition \eqref{eq:pointwiseStat}.
Moreover, since $g_+^*$ is convex and nondecreasing on $[0,\infty)$, the map
$w\mapsto g_+^*(w^2)$ is convex. This shows the convexity of $H$.
The growth of $g$ at infinity determines the effective domain of $g_+^*$ and
therefore the nature of the pressure problem: it may rule out nontrivial
minima, impose a gradient constraint, or produce a superlinear
gradient penalty. More precisely:
\begin{itemize}
\item
If $\lim_{s\to\infty}g(s) <+\infty$,  then we have
\[
   g_+^*(v)=+\infty \qquad \text{for all } v>0,
\]
so the Hamiltonian is finite only at $\varphi=0$. Consequently, no stationary
state exists when $f\not\equiv0$.
\item
If $\lim_{s\to\infty}g(s)=+\infty$ and $\lim_{s\to\infty}\frac{g(s)}s=\alpha\in(0,+\infty)$,
then
\[
   [0,\alpha)\subseteq\operatorname{dom}g_+^*\subseteq[0,\alpha],
   \qquad g_+^*(v)=+\infty\quad\text{for }v>\alpha.
\]
The endpoint $\alpha$ belongs to the effective domain if and only if
$\sup_{s\geq0}(\alpha s-g(s))<+\infty$. Thus the Hamiltonian itself encodes a
strict or non-strict gradient constraint, depending on this endpoint behavior.
The gradient-constraint problem has at most one minimizing pressure because
of the strictly convex Dirichlet term. Existence of a corresponding stationary
conductivity additionally requires an $L^2$ Lagrange multiplier realizing the
pointwise supremum.
\item
If $\lim_{s\to\infty}g(s)=+\infty$ and
$\lim_{s\to\infty}g(s)/s=+\infty$, i.e., if $g$ grows superlinearly, then
$\operatorname{dom}g_+^*=[0,+\infty)$.
Since $H$ is coercive, proper, lower semicontinuous, and strictly convex, it
has a unique minimizing pressure $u_0 \in H_0^1(\Omega)$.
This pressure lifts to a stationary conductivity if there exists a pointwise maximizer
(in the $\sup$ that defines the Legendre transform of $g_+$)
$a_0\in L^2_+(\Omega)$ of finite energy such that $(u_0,a_0)$ satisfies the
pressure equation \eqref{eq:saddle-pressure}. Uniqueness of the resulting
conductivity follows, for instance, if $g$ is strictly convex.
\end{itemize}

The linear and power-law metabolic costs illustrate, respectively, the
gradient-constraint and superlinear regimes.
\begin{enumerate}
\item
First, we choose $g(s):=\lambda s$ with $\lambda>0$.
Then
\[
g_+^*(v)
=
\sup_{s\ge0} \, (vs-\lambda s)
=
\begin{cases}
+\infty, &\mbox{for } v>\lambda,\\
0, & \mbox{for } 0 \le v\le \lambda.
\end{cases}
\]
The steady-state problem is
\[
   u_0 \in \argmin_{\substack{\varphi\in H^1_0(\Omega)\\ \|\nabla\varphi\|_{L^\infty(\Omega)}^2\leq\lambda}}
   \left( \frac r2\int_\Omega |\nabla\varphi|^2\,\d x - \int_\Omega f\varphi\,\d x \right).
\]
\item
For the choice $g(s):=\frac\lambda\gamma s^\gamma$ with $\lambda>0$ and $\gamma>1$, we have
\[
   g_+^*(v) = \sup_{s\ge0} \left( sv-\frac\lambda\gamma s^\gamma \right)
     = c\,v^{\gamma/(\gamma-1)},
\]
with $c:=\frac{\gamma-1}{\gamma\,\lambda^{1/(\gamma-1)}}$.
The steady-state problem then reads
\[
   u_0  = \argmin_{\varphi\in W^{1, p}_0(\Omega)}
   \int_\Omega \left(  \frac r2|\nabla\varphi|^2 +
   \frac c2|\nabla\varphi|^p -   f\varphi \right)\,\d x,
\]
with $p=\frac{2\gamma}{\gamma-1}\in(2,\infty)$.
Note that the second term under the integral is of $p$-Laplacian type.
\end{enumerate}

%%%%%%%%%%%%%%%%%%%%%%%%%%%%%%%%%%%%%%%%%%%%%%%%%%%%%%%%%%%%

\subsection{Minimizers and long-time behavior}
\label{subsec:ELCharacterizationMinimizers}

We now characterize stationary minimizers of $\E_r$ and
discuss the long-time behavior of its gradient flow.
By Fermat's principle, $a_0\in L^2_+(\Omega)$
is a global minimizer of $\E_r$ if and only if
\[
   0\in\partial\E_r(a_0),
\]
or, equivalently, $\partial^\circ\E_r(a_0)=0$,
where $\partial^\circ\E_r(a_0)$ denotes the
minimal section of the subdifferential.
The following result provides a sufficient condition
for minimality in terms of the stationary elliptic problem.
%without requiring strong duality or saddle-point attainment.

\begin{proposition}[Stationary minimizers]
\label{prop:stationary-minimizers}
Let $r>0$ and let $g:[0,\infty)\to[0,\infty)$
be finite, convex and continuous at zero.
Suppose that $a_0\in L^2_+(\Omega)$ and that
$u_0\in\Ha{r+a_0}$ is the corresponding
energy solution of
\begin{equation}\label{eq:stationary-pressure}
   -\operatorname{div}\bigl((r+a_0)\nabla u_0\bigr)=f
   \qquad\text{in }\Omega,
\end{equation}
with homogeneous Dirichlet boundary conditions.
Assume moreover that
\begin{equation}\label{eq:stationary-conductivity}
   |\nabla u_0|^2\in\partial g_+(a_0)
   \qquad\text{a.e. in }\Omega,
\end{equation}
where $g_+$ denotes the extension of $g$ by
$+\infty$ to negative arguments.
Then $a_0$ is a global minimizer of $\E_r$
on $L^2(\Omega)$. If $g$ is strictly convex,
the minimizer is unique.
\end{proposition}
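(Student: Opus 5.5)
The plan is to prove minimality directly from the common-core representation of Lemma~\ref{lem:Frrepr}, using the state $u_0$ itself as the test function. Neither the subdifferential formula nor the $L^4$ regularity of $\nabla u_0$ should be needed. Let $a\in L^2_+(\Omega)$ be arbitrary with $\E_r(a)<+\infty$; otherwise there is nothing to prove.

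\emph{Step 1: a lower bound for $\E_r(a)$.} By Lemma~\ref{lem:Frrepr}, for every $\varphi\in C_0^\infty(\Omega)$,
\[
   \E_r(a)\geq 2\int_\Omega f\varphi\,\d x-\int_\Omega(r+a)|\nabla\varphi|^2\,\d x+\int_\Omega g(a)\,\d x .
\]
The subgradient inequality \eqref{eq:stationary-conductivity} gives, pointwise a.e.,
\[
   g(a)\geq g(a_0)+|\nabla u_0|^2(a-a_0).
\]
I want to insert $\varphi=u_0$. This is where I expect the main obstacle: $u_0$ lies only in $\Ha{r+a_0}$, and it need not be approximable in the $\Ha{r+a}$-norm, since $a$ may be much larger than $a_0$.

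\emph{Step 2: handling the test function.} I would avoid density in $\Ha{r+a}$ and argue as follows. Take $\varphi_n\in C_0^\infty(\Omega)$ with $\varphi_n\to u_0$ in $\Ha{r+a_0}$. After passing to a subsequence, $\nabla\varphi_n\to\nabla u_0$ a.e. Insert $\varphi_n$ into the bound above and split
\[
   \int_\Omega(r+a)|\nabla\varphi_n|^2\,\d x=\int_\Omega(r+a_0)|\nabla\varphi_n|^2\,\d x+\int_\Omega(a-a_0)|\nabla\varphi_n|^2\,\d x .
\]
The first term converges to $\int_\Omega(r+a_0)|\nabla u_0|^2\,\d x$. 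The negative part of $(a-a_0)|\nabla\varphi_n|^2$ is dominated by $a_0|\nabla\varphi_n|^2$, which converges in $L^1$, so it is handled by convergence.

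For the positive part, convergence is not available in general. If $\int_\Omega(a-a_0)_+|\nabla u_0|^2\,\d x=+\infty$, the subgradient inequality makes $\int_\Omega g(a)\,\d x=+\infty$, which is excluded. So this integral is finite. Even then, the positive part of the sequence need not converge in $L^1$. The remedy is to replace $\varphi_n$ by truncated, mollified approximations adapted to the weight $r+\max\{a,a_0\}$. Concretely, I would pick the test functions as elements of $\Ha{r+\max\{a,a_0\}}$ approximating $u_0$. This reduces the problem to showing that $u_0$ can be approximated by functions in the space with the larger weight; that is the technical heart of the argument, and I would try truncation $T_k(u_0)$ combined with monotone convergence first.

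Alternatively, and more simply, I would use the concavity trick. By Lemma~\ref{lem:FrconvexLSC}, $\E_r$ is convex, so it suffices to show that the directional derivative at $a_0$ toward bounded competitors is nonnegative and then pass to general $a$ by truncation $a_k=\min\{a,k\}$ together with lower semicontinuity. For bounded $a$, Lemma~\ref{lem:SubDiffFr}'s computation gives
\[
   D\F_r(a_0)[a-a_0]=-\int_\Omega(a-a_0)|\nabla u_0|^2\,\d x ,
\]
and the weight comparison in that proof makes $u_0$ admissible.

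\emph{Step 3: conclusion.} With $\varphi=u_0$ justified, the weak equation gives $\int_\Omega fu_0\,\d x=\int_\Omega(r+a_0)|\nabla u_0|^2\,\d x$. Adding this to the subgradient inequality of Step~1, the cross terms $\int_\Omega(a-a_0)|\nabla u_0|^2\,\d x$ cancel, leaving
\[
   \E_r(a)\geq\F_r(a_0)+\int_\Omega g(a_0)\,\d x=\E_r(a_0).
\]

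\emph{Uniqueness.} Suppose $g$ is strictly convex and $a_1\neq a_0$ is another minimizer. Since $\F_r$ is convex and $\Gamma$ is strictly convex on $\{a_1\neq a_0\}$, the midpoint of $a_0$ and $a_1$ would have strictly smaller energy, a contradiction.
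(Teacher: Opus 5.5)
Your proposal has a genuine gap at the point you flag yourself. Neither route justifies testing the common-core representation of $\F_r(a)$ with $u_0$ when $a$ is unbounded, so the phrase ``with $\varphi=u_0$ justified'' in Step~3 is never earned.

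\textbf{First route.} Approximating $u_0$ by smooth functions in the norm of $\Ha{r+\max\{a,a_0\}}$ is a density statement in a weighted space with an unbounded weight. Such density can fail in general; this is the Lavrentiev/Zhikov phenomenon behind the remark after Lemma~\ref{lem:radial-energy-approximation}. Truncating the \emph{values} of $u_0$ by $T_k$ gives no control of $\int_\Omega a|\nabla(\rho_\varepsilon*T_k(u_0))|^2\,\d x$.

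\textbf{Second route.} The convexity argument is correct for bounded competitors, but the passage to general $a$ fails. Lower semicontinuity gives $\E_r(a)\le\liminf_k\E_r(a_k)$, which is the wrong direction. Upper continuity along $a_k=\min\{a,k\}$ is not available either. Indeed $\Gamma(a_k)\to\Gamma(a)$, but since $a_k\le a$ and $a_k$ is bounded, $\F_r(a_k)\ge 2\int_\Omega fv\,\d x-\int_\Omega(r+a)|\nabla v|^2\,\d x$ for every $v\in H_0^1(\Omega)$ with finite $(r+a)$-energy. Hence $\lim_k\F_r(a_k)$ dominates the supremum over the maximal finite-energy space. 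That supremum can strictly exceed $\F_r(a)$ when this space is larger than the smooth core. So $\E_r(a_k)\ge\E_r(a_0)$ says nothing about $\E_r(a)$.

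\textbf{The missing idea} is to eliminate $a$ \emph{before} any limit is taken.
\begin{itemize}
\item For $\varphi\in C_0^\infty(\Omega)$, the Fenchel--Young inequality $a|\nabla\varphi|^2-g(a)\le g_+^*(|\nabla\varphi|^2)$ turns Lemma~\ref{lem:Frrepr} into $\E_r(a)\ge-2H(\varphi)$.
\item Here $H(\varphi)=\int_\Omega\bigl(\frac r2|\nabla\varphi|^2+\frac12 g_+^*(|\nabla\varphi|^2)-f\varphi\bigr)\,\d x$ no longer involves $a$. Its integrand is spatially homogeneous, radial, convex and nondecreasing in $|\nabla\varphi|$.
\item Lemma~\ref{lem:radial-energy-approximation} then gives $\varphi_n\in C_0^\infty(\Omega)$ with $\varphi_n\to u_0$ in $H_0^1(\Omega)$ and $H(\varphi_n)\to H(u_0)$, provided $H(u_0)<+\infty$.
\item Finiteness of $H(u_0)$ follows from three facts, with $s_0:=|\nabla u_0|^2$: the Fenchel equality $g_+^*(s_0)=a_0s_0-g(a_0)$; the bound $0\le g(a_0)\le g(0)+a_0s_0$, which is the subgradient inequality at $v=0$; and the energy identity $\int_\Omega(r+a_0)s_0\,\d x=\int_\Omega fu_0\,\d x$.
\item The same bound gives $\Gamma(a_0)<+\infty$, which your Step~3 uses tacitly.
\item The Fenchel equality and the energy identity then convert $-2H(u_0)$ into exactly $\E_r(a_0)$. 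This is the paper's proof.
\end{itemize}

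Alternatively, you can keep your Step~3. The bound $a|\nabla\varphi_n|^2\le g(a)+g_+^*(|\nabla\varphi_n|^2)$, the $L^1$ convergence from the lemma, and Vitali's theorem show that $u_0\in\Ha{r+a}$ for every competitor with $\Gamma(a)<+\infty$. That makes your cancellation argument rigorous, but only through this Fenchel/radial-approximation device, not through truncation. Your uniqueness argument is fine.
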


\begin{proof}
Set $s_0:=|\nabla u_0|^2$.
By \eqref{eq:stationary-conductivity} and
Fenchel's equality,
\begin{equation}\label{eq:stationary-Fenchel}
   g_+^*(s_0)=a_0s_0-g(a_0)
   \qquad\text{a.e. in }\Omega.
\end{equation}
Since $g\geq0$ and $s_0\geq0$, we have
$g_+^*(s_0)\geq-g(0)$, and consequently
\[
   0\leq g(a_0)
   \leq a_0s_0+g(0).
\]
The weighted energy identity implies
\[
   \int_\Omega(r+a_0)s_0\,\d x
   =\int_\Omega fu_0\,\d x<+\infty.
\]
Thus $g(a_0)\in L^1(\Omega)$, and
\eqref{eq:stationary-Fenchel} shows that
$g_+^*(s_0)\in L^1(\Omega)$.
In particular, the Hamiltonian $H$
introduced in the preceding subsection
is finite at $u_0$.

By the Hamiltonian inequality established there,
whose extension from smooth test functions to
$H_0^1(\Omega)$ follows from
Lemma~\ref{lem:radial-energy-approximation},
we have for every $a\in L^2_+(\Omega)$
\[
   \E_r(a)\geq-2H(u_0).
\]
Using \eqref{eq:stationary-Fenchel} and the
weighted energy identity again, we obtain
\begin{align*}
   -2H(u_0)
   &=2\int_\Omega fu_0\,\d x
     -r\int_\Omega s_0\,\d x
     -\int_\Omega g_+^*(s_0)\,\d x\\
   &=2\int_\Omega fu_0\,\d x
     -\int_\Omega(r+a_0)s_0\,\d x
     +\int_\Omega g(a_0)\,\d x\\
   &=\int_\Omega(r+a_0)|\nabla u_0|^2\,\d x
     +\int_\Omega g(a_0)\,\d x\\
   &=\E_r(a_0).
\end{align*}
Hence $\E_r(a)\geq\E_r(a_0)$ for every
$a\in L^2_+(\Omega)$, proving global minimality.
If $g$ is strictly convex, then $\E_r$ is strictly
convex on its effective domain, and the minimizer
is unique.
\end{proof}

The condition \eqref{eq:stationary-conductivity}
is equivalent to
\[
   a_0\in\partial g_+^*(|\nabla u_0|^2)
   \qquad\text{a.e. in }\Omega.
\]
If $g_+^*$ is differentiable at
$|\nabla u_0|^2$, this reduces to
\[
   a_0=(g_+^*)'(|\nabla u_0|^2).
\]
%The subdifferential formulation is more general, as it also covers nonsmooth conjugates and
%gradient constraints arising, for instance, from linear metabolic costs.

Conversely, a minimizer of $\E_r$ gives rise
to a stationary pair of the above form whenever
the corresponding pressure dual problem is attained
without a duality gap and the optimizing conductivity
belongs to $L^2_+(\Omega)$. % with the required core-energy properties.
%The proposition itself does not require these additional assumptions.
%In particular, neither $\nabla u_0\in L^4(\Omega)$ nor $g'(a_0)\in L^2(\Omega)$ is needed for its sufficiency conclusion.

For the example of the power law $g(s)=\frac1\gamma s^\gamma$ with $\gamma>1$,
we calculated
\[
   g_+^*(v) = C_\gamma v^{\frac{\gamma}{\gamma-1}}
\]
for some constant $C_\gamma>0$. Hence
\[
   \left(g_+^*\right)'(v) = C_\gamma \frac{\gamma}{\gamma-1} v^{\frac1{\gamma-1}},
\]
and, after absorbing the preceding factor into $C_\gamma$,
\[
   a_0 = \left(g_+^*\right)'\left(|\nabla u_0|^2\right) = C_\gamma |\nabla u_0|^{\frac{2}{\gamma-1}}.
\]
The natural Hamiltonian estimate gives $a_0\in L^\gamma(\Omega)$. It therefore
guarantees $a_0\in L^2(\Omega)$ when $\gamma\geq2$, while
$\nabla u_0\in L^4(\Omega)$ and $g'(a_0)\in L^2(\Omega)$ follow when
$1<\gamma\leq2$.
%Hence all three regularity properties follow simultaneously from the natural estimates only for $\gamma=2$.
%They are not needed for the cancellation-aware variational characterization above.
The stationary certificate does not require that $\nabla u_0 \in L^4(\Omega)$ or $g'(a_0)\in L^2(\Omega)$.
However, the condition $a_0 \in L^2(\Omega)$ must be checked separately when $1 < \gamma < 2$.

%\paragraph{Convergence of the flow.}
Finally, we make an observation about the long-time behavior of the gradient flow.
If $\argmin\E_r\neq\emptyset$, then there exists $a_\infty\in\argmin\E_r$ such that
\[
   a(t)\rightharpoonup a_\infty
   \qquad\text{weakly in }L^2(\Omega)\quad\text{as }t\to+\infty.
\]
Indeed, for every $\bar a\in\argmin\E_r$, the subgradient inequality and the
evolution equation give
\[
   \frac12\frac{\d}{\d t}\|a(t)-\bar a\|_{L^2(\Omega)}^2
   \leq-\bigl(\E_r(a(t))-\E_r(\bar a)\bigr).
\]
Thus the distance to every minimizer has a limit, the energy converges to its
minimum, and every weak cluster point is a minimizer. Opial's lemma \cite{Opial, OnOpial} then gives
convergence of the whole trajectory. If the minimizer is unique, then it is $a_\infty$.
%is that unique minimizer.

%%%%%%%%%%%%%%%%%%%%%%%%%%%%%%%%%
%% Degenerate Mosco limit
%%%%%%%%%%%%%%%%%%%%%%%%%%%%%%%%%
\subsection{Passage to the degenerate problem}\label{subsec:DegenerateLimit}
All results above use the uniform ellipticity supplied by $r>0$. We now
identify the variational problem obtained in the limit $r\to 0^+$. It is more
convenient to work with the total conductivity $b:=r+a$, rather than with $a$ itself.
This places all cutoff problems on the common
space $L^2(\Omega)$ and makes their admissible sets nested. For a fixed
$a\in L^2_+(\Omega)$, we have $b\in L^2_r(\Omega)$, where
\[
   L^2_{r}(\Omega) :=
   \left\{   v\in L^2(\Omega):\ v(x)\geq r  \text{ for a.e. }x\in\Omega  \right\},
\]
and $r+L^2_+(\Omega)=L^2_{r}(\Omega)$.
We define
\[
   \G_r(b) :=
   \begin{cases}
      \displaystyle
      \int_\Omega b\,|\nabla u[b]|^2\,\d x,
         & \text{if } b\geq r \text{ a.e. in }\Omega,\\[3mm]
      +\infty,
         & \text{otherwise},
   \end{cases}
\]
where $u=u[b]\in\Ha{b}$ %\hookrightarrow H^1_0(\Omega)$
is the unique $H_0^1(\Omega, b)$-energy solution of
\[
   -\operatorname{div}(b\nabla u)=f
      \qquad\text{in }\Omega,
\]
subject to the homogeneous Dirichlet boundary condition $u=0$ on $\partial\Omega$.
Thus $\G_r:L^2(\Omega)\to\Rext$.
By construction,
\[
   \G_r(r+a)=\F_r(a)
   \qquad\text{for every }a\in L^2_+(\Omega).
\]
Thus $\G_r$ is the regularized pumping-energy functional written in
terms of the total conductivity; the parameter $r$ now enters only through the
constraint $b\geq r$. In particular, $\G_{r_1}$ extends $\G_{r_2}$ whenever
$0<r_1\leq r_2$.

For $b\in L^2_{r}(\Omega)$ the representation \eqref{eq:Frrepr} gives
\[
   \G_r(b)
   =\sup_{\varphi\in C_0^\infty(\Omega)}
   \left(2\int_\Omega f\varphi\,\d x
   -\int_\Omega b|\nabla\varphi|^2\,\d x\right).
\]
It is also useful to introduce the quadratic functional
\[
   \mathcal H_b(\varphi)
   :=
   \begin{cases}
      \displaystyle
      \frac12\int_\Omega b|\nabla\varphi|^2\,\d x,
         & \text{if } \varphi\in\Ha{b},\\[3mm]
      +\infty,
         & \text{otherwise}.
   \end{cases}
\]
on $L^2(\Omega)$. With this notation,
\[
   \G_r(b)=2\mathcal H_b^*(f),
\]
where the Legendre transform $\mathcal H_b^*$ is taken with respect to the
$L^2$-$L^2$ duality.

Let $(r_n)_{n\in\N}$ be a decreasing sequence with vanishing limit as $n\to\infty$.
The corresponding functionals are monotone,
\[
   \G_{r_{n+1}}\leq \G_{r_n}
   \qquad\text{on }L^2(\Omega).
\]
Indeed, if $b\geq r_n$ almost everywhere, then $b\geq r_k$ for every
$k\geq n$, and the variational representation, which depends only on $b$,
gives $\G_{r_k}(b)=\G_{r_n}(b)$. If $b\not\geq r_n$, then
$\G_{r_n}(b)=+\infty$, and the inequality is automatic.

The limit is best expressed in the Mosco topology because this is the
variational convergence compatible with maximal-monotone subdifferentials.
Here we identify only the pumping-energy functionals; no convergence of the
associated gradient flows is asserted.

\begin{proposition}[Degenerate Mosco limit]\label{prop:degenerate-Mosco}
Let $r_n\to 0^+$. Then $\G_{r_n}$ converges in the sense of Mosco on
$L^2(\Omega)$ to the relaxed pumping-energy functional
\begin{equation}\label{def:G0-relaxed}
   \G_0(b):=
   \begin{cases}
   \displaystyle\sup_{\varphi\in C_0^\infty(\Omega)}
   \left(2\int_\Omega f\varphi\,\d x
   -\int_\Omega b|\nabla\varphi|^2\,\d x\right),&b\in L^2_+(\Omega),\\[3mm]
   +\infty,&\text{otherwise.}
   \end{cases}
\end{equation}
\end{proposition}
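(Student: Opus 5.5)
We verify the two Mosco conditions directly, using the common-core representation. For every $r>0$ and every $b\in L^2(\Omega)$ we can write
\[
   \G_r(b)=\sup_{\varphi\in C_0^\infty(\Omega)}\ell_\varphi(b)+\iota_{L^2_r(\Omega)}(b),
   \qquad
   \ell_\varphi(b):=2\int_\Omega f\varphi\,\d x-\int_\Omega b|\nabla\varphi|^2\,\d x .
\]
Similarly, $\G_0=\sup_\varphi\ell_\varphi+\iota_{L^2_+(\Omega)}$. Each $\ell_\varphi$ is affine and continuous for the weak topology of $L^2(\Omega)$, because $|\nabla\varphi|^2\in L^\infty\subset L^2$.

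\emph{Liminf inequality.} Let $b_n\rightharpoonup b$ weakly in $L^2(\Omega)$. We may assume $\liminf_n\G_{r_n}(b_n)<+\infty$ and pass to a subsequence realizing the liminf, with $b_n\in L^2_{r_n}(\Omega)\subset L^2_+(\Omega)$. The cone $L^2_+(\Omega)$ is convex and strongly closed, hence weakly closed, so $b\in L^2_+(\Omega)$. For each fixed $\varphi\in C_0^\infty(\Omega)$ we have $\G_{r_n}(b_n)\geq\ell_\varphi(b_n)\to\ell_\varphi(b)$. Hence $\liminf_n\G_{r_n}(b_n)\geq\ell_\varphi(b)$, and taking the supremum over $\varphi$ gives $\liminf_n\G_{r_n}(b_n)\geq\G_0(b)$. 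This step is routine.

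\emph{Recovery sequence.} Let $b\in L^2(\Omega)$ with $\G_0(b)<+\infty$, so that $b\in L^2_+(\Omega)$. We take $b_n:=\max\{b,r_n\}$. Then $b_n\in L^2_{r_n}(\Omega)$ and $|b_n-b|\leq r_n$, so $b_n\to b$ strongly in $L^2(\Omega)$. Since $b_n\geq b$, every $\varphi$ satisfies $\ell_\varphi(b_n)\leq\ell_\varphi(b)$, and therefore
\[
   \G_{r_n}(b_n)=\sup_\varphi\ell_\varphi(b_n)\leq\sup_\varphi\ell_\varphi(b)=\G_0(b).
\]
This yields $\limsup_n\G_{r_n}(b_n)\leq\G_0(b)$. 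If $\G_0(b)=+\infty$, the constant sequence $b_n=b$ works, as does any strongly convergent sequence. Combined with the liminf inequality, this gives $\G_{r_n}(b_n)\to\G_0(b)$.

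\emph{Main point to check.} The key structural fact is that $\G_r$ coincides, on its domain, with the $r$-independent supremum $\sup_\varphi\ell_\varphi$. This is Lemma~\ref{lem:Frrepr} applied to $a=b-r\in L^2_+(\Omega)$, as recorded above for $\G_r$. It is exactly what makes the order monotonicity $b_n\geq b\Rightarrow\ell_\varphi(b_n)\leq\ell_\varphi(b)$ usable without any control of the states $u[b_n]$.

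The only genuine subtlety is that $\G_0$ is defined by the smooth core rather than by a weighted energy solution. This definition is precisely what makes the supremum formula valid at degenerate $b$, so no density or identification of weighted spaces is needed. The weak closedness of $L^2_+(\Omega)$ handles the constraint in the liminf step.
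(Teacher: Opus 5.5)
Your proof is correct and follows essentially the same route as the paper. The liminf inequality comes from the affine minorants $\ell_\varphi$; the paper phrases this as $\G_{r_n}\geq\G_0$ together with weak lower semicontinuity of $\G_0$. The recovery sequence lifts $b$ above $r_n$ and uses order reversal. Your choice $b_n=\max\{b,r_n\}$ instead of the paper's $b_n=b+r_n$, with the matching lower bound taken from the liminf step rather than from lower semicontinuity of $\G_0$, is an inessential variation.
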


\begin{proof}
The cone $L^2_+(\Omega)$ is weakly closed. Moreover, for every fixed
$\varphi\in C_0^\infty(\Omega)$, the mapping
\[
   b\mapsto
   2\int_\Omega f\varphi\,\d x
   -\int_\Omega b|\nabla\varphi|^2\,\d x
\]
is weakly continuous on $L^2(\Omega)$. Hence $\G_0$, being the supremum of
these affine functionals on the weakly closed cone, is weakly lower
semicontinuous.

Let $b_n\rightharpoonup b$ in $L^2(\Omega)$. Since
$\G_{r_n}(b_n)\geq\G_0(b_n)$, we obtain
\[
   \liminf_{n\to\infty}\G_{r_n}(b_n)
   \geq\liminf_{n\to\infty}\G_0(b_n)
   \geq\G_0(b).
\]
It remains to construct a recovery sequence when $\G_0(b)<+\infty$, in which
case $b\in L^2_+(\Omega)$. We take $b_n:=b+r_n$. Then
$b_n\to b$ strongly in $L^2(\Omega)$, $b_n\geq r_n$, and
\[
   \G_{r_n}(b_n)=\G_0(b+r_n).
\]
For each fixed $\varphi\in C_0^\infty(\Omega)$, the corresponding expression
in the defining supremum increases to
\[
   2\int_\Omega f\varphi\,\d x
   -\int_\Omega b|\nabla\varphi|^2\,\d x.
\]
Since $b+r_n \to b$ strongly in $L^2(\Omega)$, lower semicontinuity and order reversal give
\[
   \G_0(b) \leq \liminf_{n\to\infty} \G_0(b+r_n) \leq \limsup_{n\to\infty} \G_0(b+r_n) \leq \G_0(b).
\]
Consequently, $\G_0(b+r_n)\to \G_0(b)$ as $n\to\infty$. If $\G_0(b)=+\infty$, the Mosco
recovery inequality is automatic; for instance, one may take the constant
sequence $b_n=b$. This proves the claimed Mosco convergence
\cite{Maso93,mosco1969convergence,mosco1994composite}.
\end{proof}

The functional $\G_0$ is well defined on the entire nonnegative cone, but for
a general degenerate conductivity it is only a relaxed pumping-energy
functional:
the associated elliptic equation need not possess an energy solution. The
next section restores such a solution theory.

%%%%%%%%%%%%%%%%%%%%%%%%%%%%%%%%%
%% Linear mobility
%%%%%%%%%%%%%%%%%%%%%%%%%%%%%%%%%
\section{Linear mobility---weak cutoff}\label{sec:WeakCutoff}
Throughout this section we consider the (possibly degenerate) elliptic equation
\begin{equation}   \label{eq:linear}
   \begin{aligned}
      -\operatorname{div} (a\nabla u) &= f \qquad\mbox{in } \Omega,\\
      u &= 0 \qquad\mbox{on } \partial\Omega.
   \end{aligned}
\end{equation}
%\subsection{The inverse-moment cutoff and the energy solution}
For $K>0$ we define the set
\begin{equation}   \label{def:KK}
   \K_K := \left\{a\in L^2_+(\Omega);\; \int_\Omega a(x)^{-d/2} \,\d x \leq K \right\},
\end{equation}
where we recall that $\Omega\subset\R^d$ is bounded with Lipschitz boundary.
We refer to this inverse-moment condition as a weak cutoff. It implies that
$a>0$ almost everywhere, but does not require $a$ to be bounded away from
zero. We first show that $\K_K$ has the geometry required for
convex analysis and provides enough coercivity to solve the corresponding
weighted elliptic problem.

\begin{lemma}\label{lem:KKcc}
For any $K>0$, the set $\K_K$ defined in \eqref{def:KK} is convex and closed in $L^2(\Omega)$.
\end{lemma}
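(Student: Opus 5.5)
Convexity will follow from the convexity of the scalar function $s\mapsto s^{-d/2}$ on $(0,+\infty)$, extended by $+\infty$ at $s=0$, together with the convexity of $L^2_+(\Omega)$. Closedness will follow from Fatou's lemma along an almost everywhere convergent subsequence. Throughout, $\int_\Omega a^{-d/2}\,\d x$ is read with the convention $0^{-d/2}=+\infty$. Thus every $a\in\K_K$ satisfies $a>0$ almost everywhere, since otherwise the integral would be infinite.

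\emph{Convexity.} Let $a_1,a_2\in\K_K$ and $\theta\in[0,1]$, and set $a_\theta:=\theta a_1+(1-\theta)a_2$. Then $a_\theta\in L^2_+(\Omega)$. Because $a_1,a_2>0$ almost everywhere, also $a_\theta>0$ almost everywhere. The function $\psi(s):=s^{-d/2}$ is convex on $(0,+\infty)$, since $\psi''(s)=\frac d2(\frac d2+1)s^{-d/2-2}>0$. Hence, pointwise almost everywhere,
\[
   a_\theta^{-d/2}\leq\theta a_1^{-d/2}+(1-\theta)a_2^{-d/2}.
\]
Integrating gives $\int_\Omega a_\theta^{-d/2}\,\d x\leq\theta K+(1-\theta)K=K$, so $a_\theta\in\K_K$.

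\emph{Closedness.} Let $(a_n)\subset\K_K$ with $a_n\to a$ in $L^2(\Omega)$. Since $L^2_+(\Omega)$ is closed, $a\geq0$. Pass to a subsequence with $a_n\to a$ almost everywhere. Define $\bar\psi:[0,+\infty)\to(0,+\infty]$ by $\bar\psi(s)=s^{-d/2}$ for $s>0$ and $\bar\psi(0)=+\infty$. This function is continuous with values in $(0,+\infty]$, in the sense that $\bar\psi(s_k)\to+\infty$ as $s_k\to0^+$. Therefore $\bar\psi(a_n(x))\to\bar\psi(a(x))$ for almost every $x$. The functions are nonnegative, so Fatou's lemma gives
\[
   \int_\Omega\bar\psi(a)\,\d x\leq\liminf_{n\to\infty}\int_\Omega a_n^{-d/2}\,\d x\leq K.
\]
In particular the set $\{a=0\}$ is null, because $\bar\psi(a)$ is integrable. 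It follows that $a\in\K_K$.

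The only delicate point is the treatment of possible zeros of the limit. Using the extended-valued lower semicontinuous integrand $\bar\psi$, rather than $s^{-d/2}$ alone, handles this within Fatou's lemma. Since $\K_K$ is convex, closedness in the strong topology also yields weak closedness by Mazur's lemma, should that be needed later.
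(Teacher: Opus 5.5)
Your proof is correct and takes essentially the same route as the paper. Convexity comes from integrating the pointwise convexity inequality for $s\mapsto s^{-d/2}$, extended by $+\infty$ at $s=0$, and closedness comes from an almost everywhere convergent subsequence and Fatou's lemma, which also forces the limit's zero set to be null; the only cosmetic differences are that you use a.e.\ positivity to avoid the extended-valued integrand in the convexity step and invoke continuity rather than lower semicontinuity of that integrand for Fatou.
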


\begin{proof}
Extend $s\mapsto s^{-d/2}$ by the value $+\infty$ at $s=0$. The resulting
function is convex and lower semicontinuous on $[0,+\infty)$. Thus, for any
$a_0$, $a_1\in \K_K$ and $t \in (0,1)$,
\[
   [ta_1+(1-t)a_0]^{-d/2} \leq t a_1^{-d/2} + (1-t) a_0^{-d/2}.
\]
Integration over $\Omega$ gives
\[
   \int_\Omega [ta_1+(1-t)a_0]^{-d/2} \d x  \le tK+(1-t)K = K,
\]
so that $ta_1+(1-t)a_0 \in \K_K$. Therefore, $\K_K$ is convex.

To prove closedness, let $a_n\in\K_K$ converge to $a$ in $L^2(\Omega)$.
After passing to a subsequence, we may assume that
\[
   a_n(x)\to a(x)\qquad\text{for almost every }x\in\Omega.
\]
In particular, $a\geq0$ almost everywhere. Fatou's lemma then gives
\[
   \int_\Omega a(x)^{-d/2} \,\d x
    \leq \liminf_{n\to+\infty} \int_\Omega a_n(x)^{-d/2} \,\d x
   \leq K.
\]
Hence $a\in\K_K$, and the set is closed.
\end{proof}

From now on we assume that $d\geq 2$. The choice of the exponent $-d/2$ in
\eqref{def:KK} is dictated by the following weighted Sobolev--Poincar\'e
estimate: it allows the inverse-moment bound to compensate for the loss of
uniform ellipticity.

\begin{lemma}\label{lem:embdL2}
Let $a$ be locally integrable on $\Omega$ and $a^{-1/2} \in L^{d}_+(\Omega)$.
Let $C_\Omega>0$ be the constant in the Sobolev--Poincar\'{e} inequality on $\Omega$.
Then
\[
   \Norm{u}_{L^2(\Omega)} \leq %C \left( \int_\Omega a |\nabla u|^2 \d x \right)^{1/2}
      %\Norm{\nabla u}_{L^\frac{2d}{d+2}(\Omega)} \leq 
      C_\Omega \Norm{a^{-1/2} }_{L^{d}(\Omega)} \left( \int_\Omega a |\nabla u|^2 \d x \right)^\frac12
   \qquad\mbox{for all } u\in C_0^\infty(\Omega).
\]
\end{lemma}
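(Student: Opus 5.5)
The estimate follows from the unweighted Sobolev--Poincar\'e inequality combined with one H\"older step. Set $p_*:=\frac{2d}{d+2}\in[1,2)$, which is admissible since $d\geq2$. Its Sobolev conjugate is $p_*^*=\frac{dp_*}{d-p_*}=2$. So for $u\in C_0^\infty(\Omega)$, the Sobolev--Poincar\'e inequality on the bounded Lipschitz domain $\Omega$ gives
\[
   \Norm{u}_{L^2(\Omega)}\leq C_\Omega\Norm{\nabla u}_{L^{\frac{2d}{d+2}}(\Omega)}.
\]
This is the inequality whose constant is meant by $C_\Omega$ in the statement. For $d=2$ we have $p_*=1$, and the inequality $\Norm{u}_{L^2}\leq C\Norm{\nabla u}_{L^1}$ still holds for compactly supported smooth functions (Gagliardo--Nirenberg with $p=1$).

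Next we factor the gradient as $|\nabla u|=a^{-1/2}\cdot a^{1/2}|\nabla u|$ on the set where $a>0$. The hypothesis $a^{-1/2}\in L^d_+(\Omega)$ forces $a>0$ almost everywhere, so this factorization holds a.e. We then apply H\"older's inequality with exponents chosen so that $\frac{1}{p_*}=\frac1d+\frac12$, which is exactly the identity $\frac{d+2}{2d}=\frac1d+\frac12$. This yields
\[
   \Norm{\nabla u}_{L^{p_*}(\Omega)}\leq \Norm{a^{-1/2}}_{L^d(\Omega)}\Norm{a^{1/2}\nabla u}_{L^2(\Omega)}.
\]
Local integrability of $a$, together with $\nabla u$ being bounded with compact support, guarantees that $\int_\Omega a|\nabla u|^2\,\d x<\infty$, so the right-hand side is meaningful.

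Chaining the two displays proves the claim. There is no real obstacle here. The only points to check are the exponent bookkeeping, which is what dictates the choice $-d/2$ in \eqref{def:KK} since $\int a^{-d/2}=\Norm{a^{-1/2}}_{L^d}^d$, and the borderline case $d=2$, where $p_*=1$ and the $W^{1,1}$ Sobolev embedding must be used.
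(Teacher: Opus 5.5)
Your proposal is correct and follows the paper's proof: the Sobolev--Poincar\'e inequality with exponent $q=\frac{2d}{d+2}$, followed by H\"older's inequality with $\frac1q=\frac1d+\frac12$ applied to the factorization $\nabla u=a^{-1/2}(\sqrt a\,\nabla u)$. Your additional remarks, that $a>0$ almost everywhere and that the endpoint case $d=2$ (where $q=1$) is handled by the $W^{1,1}$ Gagliardo--Nirenberg--Sobolev inequality, are correct details the paper leaves implicit.
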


\begin{proof}
Let $u\in C_0^\infty(\Omega)$ and set $q:=2d/(d+2)$. The
Sobolev--Poincar\'e inequality gives
\[
   \Norm{u}_{L^2(\Omega)}\leq C_\Omega\Norm{\nabla u}_{L^q(\Omega)}.
\]
Since $1/q=1/d+1/2$, we write
$\nabla u=a^{-1/2}(\sqrt a\,\nabla u)$ and apply H\"older's inequality to
obtain
\[
   \Norm{\nabla u}_{L^q(\Omega)}
   \leq \left( \int_\Omega a(x)^{-d/2} \,\d x \right)^{1/d}
      \left( \int_\Omega a |\nabla u|^2 \d x \right)^{1/2}.
\]
Combining the two estimates proves the claim.
\end{proof}

For $a\in\K_K$, we use the weighted bilinear form
\[
   (u,v)_a := % \int_\Omega u v\,\d x + 
      \int_\Omega a \nabla u\cdot\nabla v \,\d x,
\]
and the induced norm $\Na{a}{u}:=(u,u)_a^{1/2}$. Lemma~\ref{lem:embdL2}
shows that $\Na{a}{u}=0$ implies $u=0$ almost everywhere, so this is indeed
a norm on $C_0^\infty(\Omega)$. We define $\Ha{a}$ as its completion.
Then the estimate of Lemma~\ref{lem:embdL2} extends by density. Moreover,
\[
   \Norm{a^{-1/2}}_{L^d(\Omega)}
   =\left(\int_\Omega a^{-d/2}\,\d x\right)^{1/d}\leq K^{1/d},
\]
and therefore
\[
   \Norm{u}_{L^2(\Omega)}
   \leq C_\Omega K^{1/d}\Na{a}{u}
   \qquad\text{for every }u\in\Ha{a}.
\]
Thus $\Ha{a}\hookrightarrow L^2(\Omega)$ continuously, and its elements have
well-defined $L^2$ representatives.
The gradient estimate used in the proof of Lemma~\ref{lem:embdL2} also
extends by density. With $q:=2d/(d+2)$, it yields
\[
   \Norm{\nabla u}_{L^q(\Omega)}
   \leq K^{1/d}\Na{a}{u}
   \qquad\text{for every }u\in\Ha{a}.
\]
Consequently, $\Ha{a}$ embeds continuously into $W^{1,q}_0(\Omega)$. In
particular, the variational equation \eqref{eq:uvarphi} below has the usual distributional
interpretation.

\begin{lemma} \label{lem:Riesz}
For every $a\in\K_K$ and $f\in L^2(\Omega)$, there exists a unique weak
solution $u\in\Ha{a}$ of the Dirichlet problem \eqref{eq:linear},
characterized by
\begin{equation}  \label{eq:uvarphi}
   (u,\varphi)_a=\int_\Omega f\varphi\,\d x
   \qquad\mbox{for every }\varphi\in\Ha{a}.
\end{equation}
In particular, $u$ solves \eqref{eq:linear} in the sense of distributions.
\end{lemma}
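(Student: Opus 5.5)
The plan is to argue exactly as in Lemma~\ref{lem:ur}, with the uniform lower bound $r>0$ replaced by the weighted Sobolev--Poincar\'e estimate of Lemma~\ref{lem:embdL2}. By construction $\Ha{a}$ is a Hilbert space: it is the completion of $C_0^\infty(\Omega)$ under $\Na{a}{\cdot}$. This is a norm on $C_0^\infty(\Omega)$ because Lemma~\ref{lem:embdL2} applies. Indeed, $a\in L^2(\Omega)$ is locally integrable, and $a^{-1/2}\in L^d(\Omega)$ with $\Norm{a^{-1/2}}_{L^d}\le K^{1/d}$ since $a\in\K_K$.

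First, I will show that $\ell(\varphi):=\int_\Omega f\varphi\,\d x$ is a bounded linear functional on $\Ha{a}$. For $\varphi\in C_0^\infty(\Omega)$, Cauchy--Schwarz and Lemma~\ref{lem:embdL2} give
\[
   |\ell(\varphi)|\le \Norm{f}_{L^2(\Omega)}\Norm{\varphi}_{L^2(\Omega)}\le C_\Omega K^{1/d}\Norm{f}_{L^2(\Omega)}\Na{a}{\varphi}.
\]
The continuous embedding $\Ha{a}\hookrightarrow L^2(\Omega)$, already extended by density, makes $\ell$ well defined on the whole completion with the same bound. The Riesz representation theorem then gives a unique $u\in\Ha{a}$ with $(u,\varphi)_a=\ell(\varphi)$ for all $\varphi\in\Ha{a}$; this is \eqref{eq:uvarphi}. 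Uniqueness within $\Ha{a}$ is also part of the Riesz statement: the difference $w$ of two solutions satisfies $(w,\varphi)_a=0$ for all $\varphi$, so testing with $\varphi=w$ gives $\Na{a}{w}=0$.

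For the distributional interpretation, I will take $\varphi\in C_0^\infty(\Omega)\subset\Ha{a}$ in \eqref{eq:uvarphi}. The step that needs care, and the one I expect to be the main obstacle, is making sense of $a\nabla u$ as a locally integrable field whose pairing with $\nabla\varphi$ agrees with the abstract inner product. Elements of the completion are not a priori functions with gradients; they are equivalence classes of Cauchy sequences. My plan is as follows:
\begin{itemize}
\item Take a sequence $u_n\in C_0^\infty(\Omega)$ that is Cauchy in $\Na{a}{\cdot}$. Then $\sqrt a\nabla u_n$ is Cauchy in $L^2(\Omega)$, with some limit $G$.
\item By the gradient estimate $\Norm{\nabla u}_{L^q}\le K^{1/d}\Na{a}{u}$ with $q=2d/(d+2)$, the sequence $u_n$ also converges in $W^{1,q}_0(\Omega)$, to the $L^2$ representative $u$.
\item Passing to an a.e.\ convergent subsequence identifies $G=\sqrt a\,\nabla u$ a.e.
\end{itemize}
Consequently $a\nabla u=\sqrt a\,G\in L^1(\Omega)$, since $\sqrt a\in L^4(\Omega)$ and $G\in L^2(\Omega)$. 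Moreover, $(u,\varphi)_a=\lim_n\int_\Omega a\nabla u_n\cdot\nabla\varphi\,\d x=\int_\Omega a\nabla u\cdot\nabla\varphi\,\d x$, because $\sqrt a\,\nabla\varphi\in L^2(\Omega)$ for smooth $\varphi$. Hence $-\operatorname{div}(a\nabla u)=f$ in $\mathcal D'(\Omega)$. The zero boundary condition is encoded by membership in $\Ha{a}\subset W^{1,q}_0(\Omega)$.

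Finally, testing \eqref{eq:uvarphi} with $\varphi=u$ gives, as in Lemma~\ref{lem:ur}, the a priori bound $\Na{a}{u}\le C_\Omega K^{1/d}\Norm{f}_{L^2(\Omega)}$. This bound is uniform over $a\in\K_K$, which will be convenient later.
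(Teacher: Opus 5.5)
Your proposal is correct and follows the paper's proof: you bound $\varphi\mapsto\int_\Omega f\varphi\,\d x$ on $\Ha{a}$ with the weighted Sobolev--Poincar\'e estimate and $\Norm{a^{-1/2}}_{L^d(\Omega)}\le K^{1/d}$, then apply the Riesz representation theorem. Your identification of the $L^2$-limit of $\sqrt a\,\nabla u_n$ with $\sqrt a\,\nabla u$ via the $W^{1,q}_0(\Omega)$ embedding is a correct, more detailed version of the distributional interpretation, which the paper settles in one sentence just before the lemma.
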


\begin{proof}
For every $\varphi\in\Ha{a}$, the preceding embedding estimate gives
\[
   \left|\int_\Omega f\varphi\,\d x\right|
   \leq\Norm{f}_{L^2(\Omega)}\Norm{\varphi}_{L^2(\Omega)}
   \leq C_\Omega K^{1/d}\Norm{f}_{L^2(\Omega)}\Na{a}{\varphi}.
\]
Hence $\varphi\mapsto\int_\Omega f\varphi\,\d x$ is a continuous linear
functional on $\Ha{a}$. The existence and uniqueness of $u$ now follow from
the Riesz representation theorem.
\end{proof}

\subsection{Relaxed pumping energy and dominated coefficients}

Lemma~\ref{lem:Riesz} solves the state equation for each fixed
$a\in\K_K$. However, since the energy space $\Ha{a}$ itself depends on $a$,
it is preferable to describe variations of the coefficient through a fixed
test space. The required functional has already appeared as the Mosco limit
$\G_0$ in \eqref{def:G0-relaxed}; from now on we denote it
$\G:L^2(\Omega)\to\Rext$,
\begin{equation}  \label{def:GLin}
   \G(a) :=
   \begin{cases}
   \displaystyle\sup_{u\in C_0^\infty(\Omega)}
   \left(2\int_\Omega f u\,\d x-\int_\Omega a|\nabla u|^2\,\d x\right),
      &\text{if }a\in L^2_+(\Omega),\\[3mm]
   +\infty,&\text{otherwise.}
   \end{cases}
\end{equation}
On the weak-cutoff set $\K_K$, this relaxed definition agrees with the energy of
the uniquely determined state. Indeed, density of $C_0^\infty(\Omega)$ in $\Ha{a}$,
continuity of the $H_0^1(a)$-norm and of the scalar product map $u \mapsto \int_\Omega f u \d x$, $H_0 ^1(a) \mapsto \R$, 
and Lemma~\ref{lem:Riesz} give
\[
   \G(a)
   =\sup_{u\in\Ha{a}}\left(2\int_\Omega fu\,\d x-\int_\Omega a|\nabla u|^2\,\d x\right)
   = \int_\Omega a |\nabla u[a]|^2 \d x,
\]
where $u[a]\in\Ha{a}$ is the unique weak solution of \eqref{eq:linear}.
Using analogous arguments as in the proof of Lemma \ref{lem:Frrepr}, we conclude that for any $a\in \K_K$,
\[
   \sup_{u\in\Ha{a}}\left(2\int_\Omega fu\,\d x-\int_\Omega a|\nabla u|^2\,\d x\right)
   =
   \sup_{u\in C_0^\infty(\Omega)}   \left(2\int_\Omega f u\,\d x-\int_\Omega a|\nabla u|^2\,\d x\right). 
\]
The advantage of \eqref{def:GLin} is that the common test space makes the
convexity and lower semicontinuity transparent.

Besides its convexity and lower semicontinuity, the functional
$\G$ admits a flux representation and can be characterized
as the strong-$L^2$ relaxation of the classical pumping
energy. We establish these properties below.

\begin{lemma}\label{lem:Gconv1}
The functional $\G$ defined by \eqref{def:GLin} is convex and lower
semicontinuous on $L^2(\Omega)$.
\end{lemma}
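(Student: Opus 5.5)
The plan is to repeat the argument of Lemma~\ref{lem:FrconvexLSC}, using the common test space $C_0^\infty(\Omega)$ in \eqref{def:GLin}. For each fixed $u\in C_0^\infty(\Omega)$, define
\[
   \ell_u(a):=2\int_\Omega fu\,\d x-\int_\Omega a|\nabla u|^2\,\d x .
\]
Since $|\nabla u|^2\in L^\infty(\Omega)\subset L^2(\Omega)$, the map $a\mapsto\ell_u(a)$ is affine and continuous on $L^2(\Omega)$; it is even weakly continuous. This is the whole reason for working with smooth test functions rather than the $a$-dependent spaces $\Ha{a}$.

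Next I would write $\G=\sup_{u\in C_0^\infty(\Omega)}\ell_u+\chi_{L^2_+(\Omega)}$, where $\chi_{L^2_+(\Omega)}$ is the indicator function ($0$ on the cone, $+\infty$ off it).
\begin{itemize}
\item A pointwise supremum of affine continuous functionals is convex and lower semicontinuous, with values in $\R\cup\{+\infty\}$. The value $-\infty$ cannot occur, because the choice $u=0$ gives $\sup_u\ell_u\geq0$.
\item $L^2_+(\Omega)$ is a convex cone, and it is closed: an $L^2$-limit of nonnegative functions has an a.e.\ convergent subsequence, so the limit is nonnegative. Hence its indicator is convex and lower semicontinuous.
\item A sum of two convex lower semicontinuous functionals with values in $\Rext$ is again convex and lower semicontinuous. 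Both summands are bounded below by $0$, so the sum is well defined and no $\infty-\infty$ ambiguity arises.
\end{itemize}

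I would also remark that the same argument gives weak lower semicontinuity, exactly as in the proof of Proposition~\ref{prop:degenerate-Mosco}. This is consistent with $\G=\G_0$. Alternatively, one could simply cite that proof, since $\G$ coincides with the Mosco limit $\G_0$ and its weak lower semicontinuity was established there.

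There is no real obstacle here. The only point needing care is not to pass through the state $u[a]$ or the spaces $\Ha{a}$, which vary with $a$. Convexity must be read off the supremum formula \eqref{def:GLin}, not off the identity $\G(a)=\int_\Omega a|\nabla u[a]|^2\,\d x$, which holds only on $\K_K$.
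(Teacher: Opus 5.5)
Your proof is correct and matches the paper's argument: for each fixed $u\in C_0^\infty(\Omega)$ the bracket in \eqref{def:GLin} is a continuous affine function of $a\in L^2(\Omega)$, so the supremum is convex and lower semicontinuous, and adding the indicator of the closed convex cone $L^2_+(\Omega)$ preserves both properties. Your additional checks, namely that $u=0$ excludes the value $-\infty$ and that $L^2_+(\Omega)$ is closed, are details the paper leaves implicit.
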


\begin{proof}
For each fixed $u\in C_0^\infty(\Omega)$, the expression in parentheses in
\eqref{def:GLin} is a continuous affine functional of $a\in L^2(\Omega)$.
Its supremum is therefore convex and lower semicontinuous. Adding the indicator
of the closed convex cone $L^2_+(\Omega)$ preserves both properties.
\end{proof}

\begin{proposition}[Flux representation]\label{prop:flux-representation}
Let $a\in L^2_+(\Omega)$ and define
\[
   \mathcal Q_f:=
   \left\{j\in L^{4/3}(\Omega;\R^d):
   -\operatorname{div}j=f\text{ in }\mathcal D'(\Omega)\right\}.
\]
Then the relaxed pumping energy $\G$ defined in \eqref{def:GLin}
admits the representation
\begin{equation}\label{eq:G-flux-representation}
   \G(a)=\inf_{j\in\mathcal Q_f}
   \int_\Omega\frac{|j|^2}{a}\,\d x,
\end{equation}
where $|j|^2/a$ is interpreted as $+\infty$ on
$\{a=0,\ j\neq0\}$ and as zero on $\{a=0,\ j=0\}$.
If $\G(a)<+\infty$, the infimum is attained by a
unique flux $j_a\in\mathcal Q_f$.
\end{proposition}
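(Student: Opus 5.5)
The plan is to prove the two inequalities between $\G(a)$ and the flux infimum separately, and then obtain attainment and uniqueness from the direct method.

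\emph{Weak duality: $\G(a)\leq\inf_{j\in\mathcal Q_f}\int_\Omega |j|^2/a\,\d x$.} Fix $j\in\mathcal Q_f$ with finite weighted energy, so that $j=0$ a.e. on $\{a=0\}$. Fix $\varphi\in C_0^\infty(\Omega)$. Since $-\operatorname{div}j=f$ in $\mathcal D'(\Omega)$ and $j\in L^{4/3}$, we have $\int_\Omega f\varphi\,\d x=\int_\Omega j\cdot\nabla\varphi\,\d x$. On $\{a>0\}$ the pointwise Young inequality gives
\[
   2j\cdot\nabla\varphi-a|\nabla\varphi|^2\leq |j|^2/a,
\]
and on $\{a=0\}$ both sides vanish. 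Integrating and taking the supremum over $\varphi$ yields the inequality. No regularity beyond $j\in L^{4/3}$ and $\varphi$ smooth is needed here.

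\emph{Reverse inequality and attainment.} Assume $\G(a)<+\infty$; otherwise there is nothing to prove. Consider the Hilbert space $X:=\overline{\{\sqrt a\,\nabla\varphi:\varphi\in C_0^\infty(\Omega)\}}^{L^2(\Omega;\R^d)}$. Finiteness of $\G(a)$ means that the sup of $2\ell(\varphi)-\|\sqrt a\nabla\varphi\|^2$ with $\ell(\varphi)=\int f\varphi$ is finite. Applying this to $t\varphi$ gives $|\ell(\varphi)|\leq \G(a)^{1/2}\|\sqrt a\nabla\varphi\|_{L^2}$. Hence $\ell$ is well defined on the dense set and extends continuously to $X$. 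Riesz representation then gives a unique $w\in X$ with $\ell(\varphi)=\int_\Omega w\cdot\sqrt a\nabla\varphi\,\d x$, and the sup equals $\|w\|_{L^2}^2=\G(a)$. Set $j_a:=\sqrt a\,w$. Then $-\operatorname{div}j_a=f$ distributionally and $\int |j_a|^2/a=\|w\|^2=\G(a)$, where $j_a=0$ on $\{a=0\}$ because $w$ can be taken to vanish there. Integrability follows from Hölder's inequality: $\sqrt a\in L^4$ and $w\in L^2$ give $j_a\in L^{4/3}$. So $j_a\in\mathcal Q_f$ realizes equality, which proves \eqref{eq:G-flux-representation} together with attainment.

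\emph{Uniqueness.} The map $j\mapsto\int|j|^2/a$ is convex on the affine set $\mathcal Q_f$. It is strictly convex in the following sense: if $j_1,j_2$ are two minimizers, both vanish on $\{a=0\}$, and on $\{a>0\}$ the integrand is strictly convex. Evaluating at the midpoint forces $(j_1-j_2)/\sqrt a=0$ a.e. on $\{a>0\}$, hence $j_1=j_2$.

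The main obstacle is this reverse step. One must ensure that the Riesz vector $w$ vanishes on $\{a=0\}$, which holds since every element of $X$ is an $L^2$-limit of functions vanishing there. One must also check that the $L^{4/3}$ membership is exactly what makes $j\cdot\nabla\varphi$ integrable, so that the weak-duality pairing is legitimate for arbitrary competitors.
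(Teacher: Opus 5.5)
Your proof is correct, but it takes a different route from the paper.

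\textbf{The paper's route.} The paper first rewrites $\G(a)$ as a supremum over $W^{1,4}_0(\Omega)$. This uses density of $C_0^\infty(\Omega)$ and continuity of $\Psi(\sigma)=\int_\Omega a|\sigma|^2\,\d x$ on $L^4(\Omega;\R^d)$. It then applies the Fenchel--Rockafellar theorem to the gradient operator $W^{1,4}_0(\Omega)\to L^4(\Omega;\R^d)$. Continuity of $\Psi$ at the origin gives strong duality and dual attainment. The conjugate $\Psi^*(\tau)=\int_\Omega|\tau|^2/(4a)\,\d x$ on $L^{4/3}$ supplies the dual integrand, and the substitution $j=\tau/2$ turns the dual constraint $-\operatorname{div}\tau=2f$ into membership in $\mathcal Q_f$. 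The uniqueness argument is the same as yours.

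\textbf{Your route.} You split the argument in two.
\begin{itemize}
\item Weak duality follows from the pointwise completion of squares $|j|^2/a-2j\cdot\nabla\varphi+a|\nabla\varphi|^2\geq0$.
\item The reverse inequality and attainment follow from a Riesz representation in the closure $X$ of $\{\sqrt a\,\nabla\varphi:\varphi\in C_0^\infty(\Omega)\}$ in $L^2(\Omega;\R^d)$.
\end{itemize}

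\textbf{What your route buys.}
\begin{itemize}
\item It is self-contained and elementary. It needs no abstract duality theorem, no computation of the conjugate of an integral functional on $L^4$ (with the usual care on $\{a=0\}$), and no detour through $W^{1,4}_0(\Omega)$.
\item It produces the optimal flux explicitly as $j_a=\sqrt a\,w$, where $w$ is the Riesz representative of $\varphi\mapsto\int_\Omega f\varphi\,\d x$. This makes transparent why $j_a$ vanishes on $\{a=0\}$ and why $j_a\in L^{4/3}$ (H\"older with $\sqrt a\in L^4$).
\item It characterizes finiteness of $\G(a)$ as boundedness of that functional in the weighted seminorm.
\item The vector $w$ is exactly the abstract replacement for $\sqrt a\,\nabla u[a]$. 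This is the mechanism behind the subsequent remark that $j_a=a\nabla u[a]$ on $\K_K$, and behind the claim that the minimizing flux is a canonical state even when no pressure exists.
\item Your weak-duality step only uses $j\in L^1_{\mathrm{loc}}$. Any finite-energy flux lies in $L^{4/3}$ anyway, by the same H\"older estimate. So the integrability class in the definition of $\mathcal Q_f$ plays no role in the value of the infimum.
\end{itemize}

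\textbf{What the paper's route buys.} It is shorter once the machinery is granted. It also does not rely on the Hilbert structure of the quadratic integrand, so it would carry over more directly to non-quadratic convex integrands.
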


\begin{proof}
Since $a\in L^2(\Omega)$, the functional
\[
   \Psi(\sigma):=\int_\Omega a|\sigma|^2\,\d x,
   \qquad \sigma\in L^4(\Omega;\R^d),
\]
is finite, convex and continuous on $L^4(\Omega;\R^d)$.
Indeed, by H\"older's inequality,
\[
   \Psi(\sigma)\leq
   \|a\|_{L^2(\Omega)}\|\sigma\|_{L^4(\Omega)}^2.
\]
Moreover, $C_0^\infty(\Omega)$ is dense in
$W^{1,4}_0(\Omega)$, and both terms in the
variational definition \eqref{def:GLin} are continuous
on this space. Consequently,
\[
   \G(a)=\sup_{u\in W^{1,4}_0(\Omega)}
   \left(2\int_\Omega fu\,\d x-\Psi(\nabla u)\right).
\]

We apply the Fenchel--Rockafellar duality theorem
to the gradient operator
\[
   \nabla:W^{1,4}_0(\Omega)
   \longrightarrow L^4(\Omega;\R^d).
\]
The continuity of $\Psi$ at the origin ensures
strong duality and attainment of the dual minimum
whenever the primal value is finite.
The convex conjugate of $\Psi$ is
\[
   \Psi^*(\tau)=
   \int_\Omega\frac{|\tau|^2}{4a}\,\d x,
   \qquad \tau\in L^{4/3}(\Omega;\R^d),
\]
with the same convention on the zero set of $a$.
The dual constraint reads
\[
   \int_\Omega\tau\cdot\nabla u\,\d x
   =2\int_\Omega fu\,\d x
   \qquad\text{for all }u\in W^{1,4}_0(\Omega),
\]
or equivalently,
$-\operatorname{div}\tau=2f$ in $\mathcal D'(\Omega)$.
Setting $j=\tau/2$ yields
\[
   \G(a)=\inf_{j\in\mathcal Q_f}
   \int_\Omega\frac{|j|^2}{a}\,\d x.
\]

If $\G(a)<+\infty$, the dual minimum is attained.
Every minimizing flux vanishes almost everywhere on
$\{a=0\}$, and the integrand $|j|^2/a$ is strictly
convex in $j$ on $\{a>0\}$. Hence the minimizing
flux $j_a$ is unique.
\end{proof}

We note that the quadratic structure of the flux functional
\eqref{eq:G-flux-representation} also appears in the
Benamou--Brenier formulation of optimal transport and its
extensions to nonlinear control systems; see
\cite{Elamvazhuthi2025}.
The difference is that our flux satisfies a stationary
divergence constraint, rather than a time-dependent
continuity equation.

\begin{remark}
The flux representation \eqref{eq:G-flux-representation}
holds for arbitrary nonnegative coefficients in
$L^2(\Omega)$, including degenerate coefficients
for which a weighted energy solution need not exist.
Thus the minimizing flux provides a canonical
variational state independently of the existence
of a pressure solution.

If $a\in\K_K$, the weighted energy solution $u[a]$
exists and the minimizing flux is given by
\[
   j_a=a\nabla u[a].
\]
Indeed, this flux belongs to $\mathcal Q_f$ and
satisfies
\[
   \int_\Omega\frac{|j_a|^2}{a}\,\d x
   =\int_\Omega a|\nabla u[a]|^2\,\d x
   =\G(a).
\]
%Notice that $j_a$ has the opposite sign to the Darcy flux $-a\nabla u[a]$.
%No normal-flux boundary condition is imposed, consistently with the homogeneous Dirichlet condition for the pressure.
\end{remark}

The next result shows that the definition \eqref{def:GLin}
is not merely a convenient extension of the pumping energy to degenerate conductivities.
In fact, it is precisely the lower-semicontinuous relaxation
of the classical pumping energy defined for bounded,
uniformly positive coefficients.

\begin{proposition}[Relaxation of the pumping energy]
\label{prop:G-relaxation}
Let $\Omega\subset\R^d$ be a bounded Lipschitz domain
and $f\in L^2(\Omega)$. Define
$\mathcal C:L^2(\Omega)\to\Rext$ by
\[
   \mathcal C(a):=
   \begin{cases}
   \displaystyle\int_\Omega a|\nabla u[a]|^2\,\d x,
   &\text{if }a\in L^\infty(\Omega)
   \text{ and }\operatorname*{ess\,inf}_\Omega a>0,\\[2mm]
   +\infty,&\text{otherwise},
   \end{cases}
\]
where $u[a]\in H_0^1(\Omega)$ is the unique weak
solution of
\[
   -\operatorname{div}(a\nabla u[a])=f
   \qquad\text{in }\Omega.
\]
Then the functional $\G$ defined in \eqref{def:GLin}
is the lower-semicontinuous envelope of $\mathcal C$
with respect to the strong topology of $L^2(\Omega)$.
In particular, for every sequence $a_n\to a$ in
$L^2(\Omega)$,
\begin{equation}\label{eq:G-relaxation-liminf}
   \G(a)\leq\liminf_{n\to\infty}\mathcal C(a_n).
\end{equation}
Conversely, for every $a\in L^2_+(\Omega)$ with
$\G(a)<+\infty$, there exists a sequence
$a_n\in L^\infty(\Omega)$ such that
\[
   \operatorname*{ess\,inf}_\Omega a_n>0,
   \qquad a_n\to a\quad\text{in }L^2(\Omega),
\]
and
\begin{equation}\label{eq:G-relaxation-recovery}
   \lim_{n\to\infty}\mathcal C(a_n)=\G(a).
\end{equation}
If $a\geq r_0>0$ almost everywhere on $\Omega$, the recovery sequence can
be chosen such that $a_n\geq r_0$ almost everywhere on $\Omega$.
\end{proposition}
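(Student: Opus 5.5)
The plan is to prove the two halves of the relaxation statement separately: first that $\G\le\mathcal C$ everywhere, giving the liminf inequality, and then the recovery sequence. Together with the lower semicontinuity of $\G$ (Lemma~\ref{lem:Gconv1}), these identify $\G$ with the strong-$L^2$ lower-semicontinuous envelope of $\mathcal C$.

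\emph{Step 1: $\G\leq\mathcal C$.} If $a\in L^\infty(\Omega)$ with $\operatorname*{ess\,inf}a=r_0>0$, then $a\in L^2_+(\Omega)$ and the space $\Ha{a}$ coincides with $H^1_0(\Omega)$ with equivalent norms. Lemma~\ref{lem:Frrepr}, applied with $r=r_0$ and coefficient $a-r_0$, or equivalently the identity $\G_r(b)=\G_0(b)$ on $L^2_r(\Omega)$ from Subsection~\ref{subsec:DegenerateLimit}, gives $\G(a)=\int_\Omega a|\nabla u[a]|^2\,\d x=\mathcal C(a)$. Off this class $\mathcal C=+\infty$, so $\G\leq\mathcal C$ holds everywhere. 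Since $\G$ is lower semicontinuous, for every sequence $a_n\to a$ we get
\[
\G(a)\leq\liminf_{n}\G(a_n)\leq\liminf_n\mathcal C(a_n),
\]
which is \eqref{eq:G-relaxation-liminf}. It also shows that the lsc envelope $\overline{\mathcal C}$ satisfies $\overline{\mathcal C}\geq\G$.

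\emph{Step 2: recovery sequence.} Let $a\in L^2_+(\Omega)$ with $\G(a)<+\infty$. I would use a two-parameter approximation with $a_{k,r}:=\min\{a,k\}+r$, where $r>0$ and $k\ge 1$. This coefficient is bounded, satisfies $a_{k,r}\geq r$, and converges to $a$ in $L^2(\Omega)$ as $k\to\infty$ and $r\to0$. By Step 1, $\mathcal C(a_{k,r})=\G(a_{k,r})$.
\begin{itemize}
\item For $r$ fixed and $k\to\infty$, the coefficients $a_{k,r}$ increase to $a+r$. Since $\G$ is order-reversing (a supremum of functions that decrease in $a$), $\G(a_{k,r})\geq\G(a+r)$. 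Lower semicontinuity gives $\liminf_k\G(a_{k,r})\geq\G(a+r)$, but the reverse inequality is needed. For this I would use the upper bound $\limsup_k\G(a_{k,r})\leq\G(a+r)$ obtained from the flux representation, Proposition~\ref{prop:flux-representation}. Take the optimal flux $j$ for $a+r$, so that $|j|^2/(a+r)\in L^1(\Omega)$ and $j\in L^{4/3}$. Then
\[
\G(a_{k,r})\le\int_\Omega\frac{|j|^2}{\min\{a,k\}+r}\,\d x\longrightarrow\int_\Omega\frac{|j|^2}{a+r}\,\d x=\G(a+r)
\]
by monotone convergence. Note that $|j|^2/(\min\{a,k\}+r)\le|j|^2/r$, and $|j|^2/r$ is integrable because the same bound holds at $k=\infty$, namely $|j|^2\le(a+r)\cdot|j|^2/(a+r)$; this should be double-checked, and $|j|^2\in L^1$ should follow from $r\,|j|^2/(a+r)\cdot\ldots$. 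The cleaner route is: for $k\ge1$ the ratio $(a+r)/(\min\{a,k\}+r)$ is finite pointwise and decreases to $1$, so monotone convergence from above applies once the $k=1$ term is finite. That term is bounded by $\int_\Omega (a+r)|j|^2/((1+r)\cdot\ldots)$, and this is where care is needed.
\item For $r\to0$, the proof of Proposition~\ref{prop:degenerate-Mosco} already gives $\G(a+r)\to\G(a)$.
\end{itemize}
A diagonal choice $k_n,r_n$ then yields \eqref{eq:G-relaxation-recovery}. If $a\geq r_0$, I would skip the shift and use $a_n=\min\{a,k_n\}$, which stays $\geq r_0$ for $k_n\ge r_0$.

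\emph{Main obstacle.} The delicate step is the upper bound under truncation from above, $\limsup_k\G(\min\{a,k\}+r)\leq\G(a+r)$. The flux route needs integrability of $|j|^2/(\min\{a,k\}+r)$. This is automatic on $\{a\le k\}$. On $\{a>k\}$ the ratio is $\frac{a+r}{k+r}\cdot\frac{|j|^2}{a+r}$, which is not obviously integrable. If the flux route fails, I would switch to the primal side: the energy solutions $u_k=u[a_{k,r}]$ are bounded in $H^1_0$ with $\int\min\{a,k\}|\nabla u_k|^2$ bounded. I would then show $u_k\to u_r[a]$ by testing the weak equations against $C_0^\infty$ functions and using weak lower semicontinuity of $\int(a+r)|\nabla u|^2$ on $\Ha{a+r}$, in the style of the proof of Lemma~\ref{lem:SubDiffFr}. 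The identity $\G(a_{k,r})=\int f u_k\,\d x\to\int f u_r[a]\,\d x=\G(a+r)$ then follows from weak $H^1_0$ convergence once the limit is identified via uniqueness in $\Ha{a+r}$. That identification, which requires the limit to lie in the smooth-core space, is the point I expect to require the most work.
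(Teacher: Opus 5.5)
Your Step~1 (the bound $\G\le\mathcal C$, hence the liminf inequality and $\overline{\mathcal C}\ge\G$) is correct and coincides with the paper's. The recovery step has a genuine gap, and it is not a missing detail. The inequality $\limsup_k\G(\min\{a,k\}+r)\le\G(a+r)$ that you flag as the main obstacle is false in general.

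Set $W:=\{\varphi\in H^1_0(\Omega):\int_\Omega(a+r)|\nabla\varphi|^2\,\d x<+\infty\}$. Since $\min\{a,k\}+r$ is bounded, $\G(\min\{a,k\}+r)$ is a supremum over all of $H^1_0(\Omega)$. Testing with $\varphi\in W$ gives
\[
   \G(\min\{a,k\}+r)\;\geq\;\G^W(a+r):=\sup_{\varphi\in W}\Bigl(2\int_\Omega f\varphi\,\d x-\int_\Omega(a+r)|\nabla\varphi|^2\,\d x\Bigr)
   \qquad\text{for every }k.
\]
Your primal argument in fact shows $\lim_k\G(\min\{a,k\}+r)=\G^W(a+r)$. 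The weak limit $\bar u$ of $u_k$ has finite $(a+r)$-energy by lower semicontinuity, so $\bar u\in W$. But nothing places $\bar u$ in the smooth-core space $\Ha{a+r}$, over which $\G(a+r)$ is the supremum.

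This gap can be strict. If $\Ha{a+r}\subsetneq W$, which is the Zhikov non-density phenomenon the paper's remark after this proposition explicitly warns about, then some $f\in L^2(\Omega)$ gives $\G^W(a+r)>\G(a+r)$. Otherwise the $W$-Riesz representatives of all $f\in L^2(\Omega)$, which are dense in $W$, would lie in the closed subspace $\Ha{a+r}$.

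This already happens for coefficients bounded below. Take $d=3$ and $a=1+|x|^{-\alpha}\mathbf 1_{C}$, with $C$ a double cone with vertex $0\in\Omega$ and $1<\alpha<3/2$; then $a\in L^2(\Omega)$. A cut-off degree-zero function equal to $\pm1$ on the two halves of $C$ has finite weighted energy. It is nevertheless not a weighted limit of smooth functions: since $\alpha>d-2$, the vertex has positive weighted capacity inside each half-cone, which would force smooth approximants to be close to both $1$ and $-1$ at $0$.

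For such $a$ (with $r_0=1$ and suitable $f$) you get $\G(\min\{a,k\})\ge\G^W(a)>\G(a)$ for all $k$. Likewise $\G(\min\{a,k\}+r)\ge\G^W(a+r)\to\G^W(a)$ as $r\to0$. So neither a diagonal choice of $(k_n,r_n)$ nor dropping the shift rescues the construction. Your flux route fails at the same point. If the optimal flux $j$ for $a+r$ had $|j|^2\in L^1(\{a>k\})$, then $j\in L^2(\Omega)$ and $\G^W(a+r)\le\int_\Omega|j|^2/(a+r)\,\d x=\G(a+r)$, contradicting the gap.

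What is missing is an approximation whose flux energy is bounded above using only $\int_\Omega|j|^2/a\,\d x<+\infty$ and $j\in L^{4/3}$. The paper gets this by mollifying the coefficient jointly with the flux instead of truncating it:
\begin{enumerate}
\item It transports $(a,j,f)$ by near-identity diffeomorphisms onto a neighbourhood of $\overline\Omega$. This preserves $-\operatorname{div}j=f$ and costs a factor $1+C\delta$.
\item It mollifies all three functions and applies Jensen's inequality to the jointly convex, positively one-homogeneous integrand $(s,z)\mapsto|z|^2/s$. This gives $\int_\Omega|q_\delta|^2/A_\delta\,\d x\le(1+C\delta)\G(a)$.
\item It restores the exact source with $r_\delta=\nabla z_\delta$, where $-\Delta z_\delta=f-s_\delta$.
\item It lifts the coefficient by $t_\delta=\delta+\|r_\delta\|_{L^2(\Omega)}$. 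The inequality $|q+r|^2/(A+t)\le|q|^2/A+|r|^2/t$ then yields $\mathcal C(a_\delta)\le(1+C\delta)\G(a)+\|r_\delta\|_{L^2(\Omega)}$.
\end{enumerate}
The lower bound $a_n\ge r_0$ is obtained afterwards by taking $\max\{a_\delta,r_0\}$ and using the order-reversing property of $\G$.
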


\begin{proof}
We divide the proof into four steps.

\smallskip
\noindent
\emph{Step 1. Lower bound.}
For every bounded, uniformly positive coefficient $a$,
the energy solution belongs to $H_0^1(\Omega)$ and
the smooth-core representation gives $\mathcal C(a)=\G(a)$.
Consequently, $\G\leq\mathcal C$ on $L^2(\Omega)$.
Since $\G$ is lower semicontinuous by
Lemma~\ref{lem:Gconv1}, for every $a_n\to a$ in $L^2$
we have
\[
   \G(a)\leq\liminf_{n\to\infty}\G(a_n)
   \leq\liminf_{n\to\infty}\mathcal C(a_n).
\]
It remains to construct a recovery sequence.

\smallskip
\noindent
\emph{Step 2. Approximation of the conductivity and flux.}
Fix $a\in L^2_+(\Omega)$ with $\G(a)<+\infty$.
By Proposition~\ref{prop:flux-representation},
there exists $j\in\mathcal Q_f$ such that
\begin{equation}\label{eq:relaxation-optimal-flux}
   -\operatorname{div}j=f,\qquad
   \int_\Omega\frac{|j|^2}{a}\,\d x=\G(a).
\end{equation}
Our aim is to approximate the pair $(a,j)$ while
preserving the divergence constraint and controlling
the flux energy.

To avoid boundary effects during mollification, we first
extend the pair to a slightly larger domain.
By the geometric construction of~\cite{ErnGuermond2016},
for sufficiently small $\delta>0$, there exist
smooth diffeomorphisms $T_\delta$ of $\R^d$,
with inverses $F_\delta$, such that
\[
   \|DT_\delta-I\|_{L^\infty}
   +\|DF_\delta-I\|_{L^\infty}\leq C\delta,
\]
and
\[
   \Omega_\delta:=F_\delta(\Omega)
   \supset\Omega+B(0,c\delta)
\]
for some $c>0$ independent of $\delta$.
In particular, their Jacobian determinants
converge uniformly to one.

For $x\in\Omega$, set $y=F_\delta(x)$ and
$D_\delta(x)=\det DF_\delta(x)$. Define
on $\Omega_\delta$
\[
   a^\delta(y):=\frac{a(x)}{D_\delta(x)},\qquad
   j^\delta(y):=\frac{DF_\delta(x)j(x)}{D_\delta(x)},
   \qquad
   f^\delta(y):=\frac{f(x)}{D_\delta(x)},
\]
and extend these functions by zero outside
$\Omega_\delta$.

The transformation of the flux is chosen to preserve
its divergence. Indeed, for every
$\varphi\in C_0^\infty(\Omega_\delta)$,
the change of variables $y=F_\delta(x)$ gives
\begin{align*}
   \int_{\Omega_\delta}j^\delta\cdot\nabla\varphi\,\d y
   =\int_\Omega
     j\cdot\nabla(\varphi\circ F_\delta)\,\d x
   =\int_\Omega f(\varphi\circ F_\delta)\,\d x
   =\int_{\Omega_\delta}f^\delta\varphi\,\d y.
\end{align*}
Thus $-\operatorname{div}j^\delta=f^\delta$
in $\mathcal D'(\Omega_\delta)$.

Moreover, the change of variables and
$\|DF_\delta-I\|_{L^\infty}\leq C\delta$ yield
\begin{align}
   \int_{\Omega_\delta}
   \frac{|j^\delta|^2}{a^\delta}\,\d y
   &=\int_\Omega
     \frac{|DF_\delta j|^2}{a}\,\d x
   \notag\\
   &\leq(1+C\delta)
     \int_\Omega\frac{|j|^2}{a}\,\d x
   =(1+C\delta)\G(a).
   \label{eq:relaxation-transport-estimate}
\end{align}
Here and below $C$ denotes a constant independent
of $\delta$, whose value may change from line to line.

Let $\rho_\varepsilon$ be a standard nonnegative
mollifier and choose
$0<\varepsilon_\delta<c\delta/2$, with
$\varepsilon_\delta\to0$ sufficiently fast.
Define on $\Omega$
\[
   A_\delta:=\rho_{\varepsilon_\delta}*a^\delta,
   \qquad
   q_\delta:=\rho_{\varepsilon_\delta}*j^\delta,
   \qquad
   s_\delta:=\rho_{\varepsilon_\delta}*f^\delta.
\]
The convolutions are well defined throughout $\Omega$,
since the mollification balls are contained in
$\Omega_\delta$. Hence
\begin{equation}\label{eq:relaxation-mollified-div}
   -\operatorname{div}q_\delta=s_\delta
   \qquad\text{in }\mathcal D'(\Omega).
\end{equation}
The functions $A_\delta$ and $q_\delta$ are smooth
and bounded, $A_\delta\geq0$, and the near-identity
property of the transformations, together with
standard mollification, gives
\begin{equation}\label{eq:relaxation-mollified-convergence}
   A_\delta\to a,\qquad
   s_\delta\to f
   \qquad\text{strongly in }L^2(\Omega).
\end{equation}

The crucial observation is that the function
\[
   (s,z)\longmapsto\frac{|z|^2}{s},
   \qquad s\geq0,\quad z\in\R^d,
\]
with the convention of Proposition~
\ref{prop:flux-representation}, is jointly convex
and positively one-homogeneous.
Jensen's inequality therefore gives
\[
   \frac{|q_\delta|^2}{A_\delta}
   \leq\rho_{\varepsilon_\delta}*
   \left(\frac{|j^\delta|^2}{a^\delta}\right).
\]
Integrating over $\Omega$ and using
\eqref{eq:relaxation-transport-estimate}, we obtain
\begin{equation}\label{eq:relaxation-mollified-energy}
   \int_\Omega\frac{|q_\delta|^2}{A_\delta}\,\d x
   \leq(1+C\delta)\G(a).
\end{equation}

\smallskip
\noindent
\emph{Step 3. Correction of the source and uniform ellipticity.}
%The mollified flux satisfies $-\operatorname{div}q_\delta=s_\delta$, whereas we need the original source $f$.
Let $z_\delta\in H_0^1(\Omega)$ solve
\[
   -\Delta z_\delta=f-s_\delta
   \qquad\text{in }\Omega,
\]
and set $r_\delta:=\nabla z_\delta$.
By the standard energy estimate and
\eqref{eq:relaxation-mollified-convergence},
\begin{equation}\label{eq:relaxation-correction-estimate}
   \|r_\delta\|_{L^2(\Omega)}
   \leq C_\Omega\|f-s_\delta\|_{L^2(\Omega)}
   \longrightarrow0.
\end{equation}
The corrected flux
\[
   J_\delta:=q_\delta+r_\delta
\]
therefore satisfies
\[
   -\operatorname{div}J_\delta=f
   \qquad\text{in }\mathcal D'(\Omega),
\]
so that $J_\delta\in\mathcal Q_f$.

To ensure strict positivity of the conductivity,
set
\[
   t_\delta:=\delta+\|r_\delta\|_{L^2(\Omega)},
   \qquad
   a_\delta:=A_\delta+t_\delta.
\]
Then $a_\delta\in L^\infty(\Omega)$,
$a_\delta\geq t_\delta>0$, and
$a_\delta\to a$ strongly in $L^2(\Omega)$.

For any $A,t>0$ and vectors $q,r\in\R^d$,
the Cauchy--Schwarz inequality gives
\[
   \frac{|q+r|^2}{A+t}
   \leq\frac{|q|^2}{A}+\frac{|r|^2}{t}.
\]
The inequality extends to $A=0$ with the
usual convention. Applying it to
$J_\delta=q_\delta+r_\delta$ and
$a_\delta=A_\delta+t_\delta$, we deduce
\begin{align*}
   \int_\Omega\frac{|J_\delta|^2}{a_\delta}\,\d x
   &\leq
   \int_\Omega\frac{|q_\delta|^2}{A_\delta}\,\d x
   +\frac{\|r_\delta\|_{L^2(\Omega)}^2}{t_\delta}\\
   &\leq
   (1+C\delta)\G(a)+\|r_\delta\|_{L^2(\Omega)}.
\end{align*}
By the flux representation and
$\mathcal C(a_\delta)=\G(a_\delta)$,
\[
   \mathcal C(a_\delta)
   \leq\int_\Omega\frac{|J_\delta|^2}{a_\delta}\,\d x
   \leq(1+C\delta)\G(a)
      +\|r_\delta\|_{L^2(\Omega)}.
\]
Consequently,
\[
   \limsup_{\delta\to0}\mathcal C(a_\delta)
   \leq\G(a).
\]
Together with the lower bound from Step 1,
this proves the recovery property
\eqref{eq:G-relaxation-recovery}.

\smallskip
\noindent
\emph{Step 4. Preservation of a positive lower bound.}
Finally, suppose that $a\geq r_0>0$ a.e. on $\Omega$.
Replace the preceding recovery sequence by
\[
   \widetilde a_\delta:=\max\{a_\delta,r_0\}.
\]
The truncation is Lipschitz, hence
$\widetilde a_\delta\to a$ strongly in $L^2(\Omega)$.
Moreover, $\widetilde a_\delta$ remains bounded
and uniformly positive. By the order-reversing
property of the pumping energy,
\[
   \mathcal C(\widetilde a_\delta)
   =\G(\widetilde a_\delta)
   \leq\G(a_\delta)=\mathcal C(a_\delta).
\]
Thus the modified sequence also satisfies
\eqref{eq:G-relaxation-recovery}, and the proof
is complete.
\end{proof}

\begin{remark}[Variational meaning of the relaxation]
Proposition~\ref{prop:G-relaxation} shows that the
smooth-core functional $\G$ is the canonical
strong-$L^2$ relaxation of the classical
Dirichlet pumping energy. In particular, it is
the largest lower-semicontinuous functional
bounded above by $\mathcal C$.

This also explains why the common smooth test
space in \eqref{def:GLin} is essential.
For unbounded or degenerate coefficients, the
space of functions having finite weighted
Dirichlet integral may be strictly larger than
the closure of $C_0^\infty(\Omega)$ in the
weighted energy norm. Taking the supremum over
this larger space can therefore yield a
different functional, which need not coincide
with the relaxation of $\mathcal C$.

The recovery construction is adapted to the fixed
source $f$ and does not imply convergence of the
associated weighted Dirichlet forms along arbitrary
sequences of coefficients. In particular, it does
not assert convergence of the corresponding pressure
solutions.
\end{remark}

The weak cutoff enters the variational problem through the restriction
\[
   \G_K:=\G+\iota_{\K_K},
\]
where $\iota_{\K_K}$ is the indicator of $\K_K$. A sufficiently large
positive constant belongs to $\K_K$ and has finite $\G$ energy, so $\G_K$ is
proper. Lemmas~\ref{lem:KKcc} and~\ref{lem:Gconv1} show that it is also convex
and lower semicontinuous on $L^2(\Omega)$.

The common-core representation \eqref{def:GLin} also preserves the natural order: increasing
the conductivity can only decrease the value of the pumping-energy functional.
This property will be central when the conductivity is composed with a nonlinear
mobility.

\begin{lemma}\label{lem:EtildeMonotonicity}
Let $a_1,a_2\in L^2_+(\Omega)$ satisfy
\[
   a_1(x)\leq a_2(x) \qquad\text{for almost all } x\in\Omega.
\]
Then
\[
   \G(a_1) \geq \G(a_2).
\]
\end{lemma}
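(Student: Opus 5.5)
The plan is to compare the two functionals term by term inside the common-core representation \eqref{def:GLin}. The key structural fact is that the test space $C_0^\infty(\Omega)$ does not depend on the coefficient. Only the weight in the quadratic term does, and that term enters with a minus sign.

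First, since $a_1,a_2\in L^2_+(\Omega)$, both values $\G(a_1)$ and $\G(a_2)$ are given by the supremum in \eqref{def:GLin}, not by the value $+\infty$ reserved for coefficients outside the cone. Fix an arbitrary $u\in C_0^\infty(\Omega)$. Then $|\nabla u|^2$ is bounded and nonnegative, so both integrals $\int_\Omega a_i|\nabla u|^2\,\d x$ are finite. The pointwise inequality $a_1\leq a_2$ a.e., multiplied by $|\nabla u|^2\geq0$ and integrated, gives
\[
   2\int_\Omega fu\,\d x-\int_\Omega a_1|\nabla u|^2\,\d x
   \;\geq\;
   2\int_\Omega fu\,\d x-\int_\Omega a_2|\nabla u|^2\,\d x .
\]
The left-hand side is at most $\G(a_1)$ by definition of the supremum. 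Hence the right-hand side is at most $\G(a_1)$ for every $u\in C_0^\infty(\Omega)$.

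Next, I take the supremum over $u\in C_0^\infty(\Omega)$ on the right, which yields $\G(a_2)\leq\G(a_1)$. The argument is valid in $\Rext$. If $\G(a_1)=+\infty$ there is nothing to prove. If $\G(a_1)<+\infty$, the bound shows $\G(a_2)<+\infty$ as well.

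There is no real obstacle here, because the common-core formulation removes the dependence of the energy space $\Ha{a}$ on the coefficient. The only point deserving a remark is that one should not argue through the state solutions $u[a_i]$. For general $a_i\in L^2_+(\Omega)$ these need not exist. Even on $\K_K$ they live in different spaces $\Ha{a_1}\supseteq\Ha{a_2}$, and comparing them directly would require checking an inclusion of completions. Working with the fixed test class sidesteps this entirely.
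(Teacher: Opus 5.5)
Your proof is correct and is essentially the paper's proof: both compare the affine expressions in \eqref{def:GLin} pointwise for each fixed $u\in C_0^\infty(\Omega)$ using $a_1\leq a_2$ and $|\nabla u|^2\geq0$, then take the supremum over the common test class. Your extra remarks on finiteness in $\Rext$ and on avoiding the state solutions are accurate, though the paper does not need them.
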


\begin{proof}
For every $u\in C_0^\infty(\Omega)$, the pointwise order $a_1\leq a_2$ gives
\[
   2\int_\Omega fu\,\d x-\int_\Omega a_1|\nabla u|^2\,\d x
   \geq
   2\int_\Omega fu\,\d x-\int_\Omega a_2|\nabla u|^2\,\d x.
\]
Taking the supremum over $C_0^\infty(\Omega)$ proves the claim.
\end{proof}

The auxiliary result concerns admissible variations around a fixed
coefficient. Estimates between coefficient-dependent energy spaces are
simplest when the comparison coefficient is bounded above by a multiple of
the reference coefficient. The following lemma shows that restricting first
to such coefficients entails no loss at the level of $L^2$ density.

%%%%%%%%%%%%%%%%%%%%%%%%%%%%%%%%%%%%%%%%%%%%%%%%%%%%%%%%%%%%
% New Lemma 3.6
\begin{lemma} %[Density of dominated coefficients]
\label{lem:BmuK-dominated-density}
Let $\mu\in C([0,+\infty))$ satisfy $\mu(0)=0$, and assume that $\mu$ is
strictly increasing on $[0,+\infty)$ and of at most linear growth at infinity.
For $K>0$ define the set
\begin{equation}\label{def:BmuK-density}
   \mathcal B_{\mu,K}
   :=\left\{b\in L^2_+(\Omega):\mu(b)\in\K_K\right\}
   =\left\{b\in L^2_+(\Omega): \mu(b)\in L^2(\Omega),\,
      \int_\Omega \mu(b)^{-d/2}\,\d x\leq K\right\}.
\end{equation}
For $b_0\in\mathcal B_{\mu,K}$ denote
\begin{equation}\label{def:Amu-density}
   A_\mu(b_0)
   :=\bigcup_{n=1}^\infty
   \left\{b\in\mathcal B_{\mu,K}:
      b(x)\leq n b_0(x)\ \text{for almost every }x\in\Omega\right\}.
\end{equation}
Then $A_\mu(b_0)$ is dense in $\mathcal B_{\mu,K}$ with respect to the
$L^2(\Omega)$ topology.
\end{lemma}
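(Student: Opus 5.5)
Given $b\in\mathcal B_{\mu,K}$, we construct elements of $A_\mu(b_0)$ converging to $b$ in $L^2(\Omega)$ by a pointwise truncation against multiples of $b_0$. The natural candidate is
\[
   b_n:=\min\{b,\,n b_0\},\qquad n\in\N .
\]
By construction $0\leq b_n\leq b$ and $b_n\leq nb_0$ almost everywhere, so the only nontrivial point is membership $b_n\in\mathcal B_{\mu,K}$. Convergence is then straightforward. Since $\mu(b_0)\in\K_K$, we have $\mu(b_0)>0$ almost everywhere. Because $\mu$ is strictly increasing with $\mu(0)=0$, this gives $b_0>0$ almost everywhere. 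Hence $nb_0\to+\infty$ a.e., so $b_n\to b$ a.e. Since $|b_n-b|\leq b\in L^2(\Omega)$, dominated convergence gives $b_n\to b$ in $L^2(\Omega)$.

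The main obstacle is the inverse-moment constraint for $b_n$, because truncating from above decreases $b$ and could increase $\int_\Omega\mu(b_n)^{-d/2}\,\d x$. Monotonicity of $\mu$ gives pointwise
\[
   \mu(b_n)=\min\{\mu(b),\mu(nb_0)\},
   \qquad
   \mu(b_n)^{-d/2}=\max\{\mu(b)^{-d/2},\mu(nb_0)^{-d/2}\}.
\]
For $n\geq1$ we have $nb_0\geq b_0$, hence $\mu(nb_0)^{-d/2}\leq\mu(b_0)^{-d/2}$. We therefore cannot simply bound by $K$, since a maximum of two integrands each with integral $\leq K$ may have integral up to $2K$.

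This needs to be handled by a modified construction. Split $\Omega$ into $E_n:=\{b\leq nb_0\}$, where $b_n=b$, and its complement, where $b_n=nb_0$ and $\mu(b_n)^{-d/2}\leq\mu(b_0)^{-d/2}$. This gives
\[
   \int_\Omega\mu(b_n)^{-d/2}\,\d x\leq\int_{E_n}\mu(b)^{-d/2}\,\d x+\int_{\Omega\setminus E_n}\mu(b_0)^{-d/2}\,\d x.
\]
The second term tends to $0$ because $|\Omega\setminus E_n|\to0$ and $\mu(b_0)^{-d/2}\in L^1$. Still, the bound may slightly exceed $K$ when $\int_\Omega\mu(b)^{-d/2}\,\d x=K$ exactly.

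To repair this, first replace $b$ by a strict-interior approximant. Take $\tilde b_\theta:=\mu^{-1}\bigl((1-\theta)\mu(b)+\theta\mu(b_0)\bigr)$ for $\theta\in(0,1)$. By convexity of $s\mapsto s^{-d/2}$ its moment is $\leq K$, and it is strict unless $\mu(b)=\mu(b_0)$ a.e. A cleaner alternative is to truncate $\mu$-values directly: set $\mu(b_n):=\min\{\mu(b),\mu(nb_0)\}$ and compare it with the convex combination above. If strictness fails, i.e.\ $b=b_0$, then $b\in A_\mu(b_0)$ trivially.

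Otherwise use a two-parameter scheme. First take $\theta\to0$, using dominated convergence together with the linear growth of $\mu$ and the continuity of $\mu^{-1}$ to get $\tilde b_\theta\to b$ in $L^2$; the at most linear growth guarantees $\mu(\tilde b_\theta)\in L^2$. Then, for fixed $\theta$, take $n\to\infty$ and use the slack $K-\int_\Omega\mu(\tilde b_\theta)^{-d/2}\,\d x>0$ to absorb the vanishing error term.

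The remaining checks are the $L^2$ integrability of $b_n$ and $\mu(b_n)$, both immediate since $b_n\leq b$, and a diagonal argument to conclude density.
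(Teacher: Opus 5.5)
Your argument is correct and has the same structure as the paper's proof. You first push $b$ into the strict interior $\int_\Omega\mu(b)^{-d/2}\,\d x<K$, then truncate by $\min\{b,nb_0\}$ and let the slack absorb the vanishing term $\int_{\{b>nb_0\}}\mu(b_0)^{-d/2}\,\d x$.

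The only difference is how the slack is created. The paper uses the dilation $(1+\varepsilon)b$ with strict monotonicity of $\mu$, plus linear growth to keep $\mu((1+\varepsilon)b)\in L^2$. Your convex combination in $\mu$-values with $b_0$ instead relies on strict convexity of $s\mapsto s^{-d/2}$ and needs the separate case $b=b_0$. Its $L^2$ domination comes from $\min\{b,b_0\}\leq\tilde b_\theta\leq\max\{b,b_0\}$ (monotonicity of $\mu^{-1}$), not from linear growth as you state. In fact your route never needs the linear-growth hypothesis, since $\mu(\tilde b_\theta)$ is already a convex combination of $L^2$ functions.
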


\begin{proof}
Note that the (growth) assumptions on $\mu$ imply that $\mu(b) \in L^2 _+(\Omega)$ if $b \in L^2 _+ (\Omega)$.
We first create slack in the inverse-moment constraint. Define
\[
   \widetilde{\mathcal B}_{\mu,K}
   :=\left\{b\in\mathcal B_{\mu,K}:
      \int_\Omega\mu(b)^{-d/2}\,\d x<K\right\}.
\]
Given $b_1\in\mathcal B_{\mu,K}$ and $\varepsilon>0$, let
\[
   b_1^\varepsilon:=(1+\varepsilon)b_1.
\]
Then
$\mu(b_1^\varepsilon)>\mu(b_1)$ on the set $\{b_1>0\}$. Moreover, the
integrability of $\mu(b_1)^{-d/2}$, together with $\mu(0)=0$, implies that
$\{b_1=0\}$ has Lebesgue measure zero. Consequently,
\[
   \int_\Omega\mu(b_1^\varepsilon)^{-d/2}\,\d x
   =\int_{\{b_1>0\}}\mu(b_1^\varepsilon)^{-d/2}\,\d x
   <\int_\Omega\mu(b_1)^{-d/2}\,\d x
   \leq K.
\]
Since $b_1^\varepsilon\to b_1$ in $L^2(\Omega)$ as
$\varepsilon\to 0^+$, it follows that
$\widetilde{\mathcal B}_{\mu,K}$ is dense in
$\mathcal B_{\mu,K}$.

Now fix $b\in\widetilde{\mathcal B}_{\mu,K}$ and define
\[
   b_n:=\min\{b,n b_0\},
   \qquad n\in\mathbb N.
\]
Since $b_0>0$ almost everywhere in $\Omega$, we have
$b_n(x)\to b(x)$ as $n\to\infty$ almost everywhere in $\Omega$. Moreover,
$0\leq b_n\leq b$, and hence the dominated convergence theorem gives
$b_n\to b$ in $L^2(\Omega)$.
Set
\[
   B_n:=\{x\in\Omega:b(x)\leq n b_0(x)\}.
\]
Then
\[
   b_n=b\,\mathbf 1_{B_n}+n b_0\,\mathbf 1_{\Omega\setminus B_n},
\]
and therefore, using the monotonicity of $\mu$,
\begin{align*}
   \int_\Omega\mu(b_n)^{-d/2}\,\d x
   &=\int_{B_n}\mu(b)^{-d/2}\,\d x
     +\int_{\Omega\setminus B_n}\mu(n b_0)^{-d/2}\,\d x\\
   &\leq\int_{B_n}\mu(b)^{-d/2}\,\d x
     +\int_{\Omega\setminus B_n}\mu(b_0)^{-d/2}\,\d x.
\end{align*}
The first integral is bounded above by
\[
   \int_\Omega\mu(b)^{-d/2}\,\d x<K.
\]
Furthermore, $\mathbf 1_{\Omega\setminus B_n}\to0$ almost everywhere, and
$\mu(b_0)^{-d/2}\in L^1(\Omega)$. Hence
\[
   \int_{\Omega\setminus B_n}\mu(b_0)^{-d/2}\,\d x\to 0
   \qquad\text{as }n\to\infty.
\]
It follows that, for all sufficiently large $n$,
\[
   \int_\Omega\mu(b_n)^{-d/2}\,\d x<K.
\]
Thus $b_n\in\mathcal B_{\mu,K}$ and $b_n\leq n b_0$ almost everywhere,
so $b_n\in A_\mu(b_0)$. Since $b_n\to b$ in $L^2(\Omega)$ and
$\widetilde{\mathcal B}_{\mu,K}$ is dense in
$\mathcal B_{\mu,K}$, the proof is complete.
\end{proof}

We have therefore retained, under the weak cutoff, both coefficient-wise
elliptic solvability and the convex, order-reversing structure of the relaxed
pumping-energy functional. Lemma~\ref{lem:BmuK-dominated-density} provides
a dense family of coefficients dominated by a multiple of a fixed reference
coefficient.
%An analogous density property after nonlinear reparametrization is not automatic and will therefore be imposed explicitly where it is needed.

%%%%%%%%%%%%%%%%%%%%%%%%%%%%%%%%%%%%%%%%%%%%%%%%%%%%%%%%%%%%
\subsection{Gradient flow under the weak cutoff}
\label{subsec:WeakCutoffGradientFlow}
%%%%%%%%%%%%%%%%%%%%%%%%%%%%%%%%%%%%%%%%%%%%%%%%%%%%%%%%%%%%

We now derive the $L^2$-gradient flow associated with the weak cutoff.
We recall the functional $\Gamma$ from \eqref{def:Gamma} and set
\begin{equation}\label{def:EKweak}
   \E_K(a):=\G(a)+\Gamma(a)+\iota_{\K_K}(a)
   =\G_K(a)+\Gamma(a),
   \qquad a\in L^2(\Omega).
\end{equation}
Here $\iota_{\K_K}$ denotes the indicator of the closed convex set $\K_K$.
The abstract flow can be constructed without identifying the subdifferential
pointwise. This distinction is important because the weak cutoff guarantees
solvability of the elliptic state equation, but it does not imply the
$L^4$-regularity of the pressure gradient needed for a pointwise $L^2$
subgradient.

\begin{theorem}[Existence and uniqueness under the weak cutoff]
\label{thm:GFweak-existence}
Let $d\geq2$, $K>0$, $f\in L^2(\Omega)$, and let
$g:[0,+\infty)\to[0,+\infty)$ be convex and continuous at zero. Then
$\E_K$ is proper, convex and lower semicontinuous on $L^2(\Omega)$, and
\begin{equation}\label{eq:closure-dom-EKweak}
   \overline{\operatorname{dom}(\E_K)}^{\,L^2(\Omega)}=\K_K.
\end{equation}
Consequently, for every $a^0\in\K_K$ there exists a unique
$a\in C([0,+\infty);\K_K)$ satisfying
\begin{equation}\label{eq:GFweak-abstract}
   \frac{\d a}{\d t}(t)+\partial\E_K(a(t))\ni0,
   \qquad t>0,
   \qquad a(0)=a^0,
\end{equation}
in the sense of maximal-monotone operators.

If $a^0\in\operatorname{dom}(\E_K)$, then
$a\in H^1(0,T;L^2(\Omega))$ for every $T>0$ and
\begin{equation}\label{eq:GFweak-minimal}
   \frac{\d a}{\d t}(t)+\partial^\circ\E_K(a(t))=0
   \qquad\text{for almost every }t>0,
\end{equation}
where $\partial^\circ\E_K(a)$ denotes the element of minimal $L^2$ norm in
$\partial\E_K(a)$. Moreover,
\begin{equation}\label{eq:GFweak-energy}
   \E_K(a(t))
   +\int_0^t\left\|\frac{\d a}{\d s}(s)\right\|_{L^2(\Omega)}^2\,\d s
   =\E_K(a^0)
   \qquad(t\geq0).
\end{equation}
\end{theorem}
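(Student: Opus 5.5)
The plan follows the proof of Theorem~\ref{thm:GFexistence}. The only new ingredients are the properness of $\E_K$ and the identification \eqref{eq:closure-dom-EKweak} of the closure of its domain.

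\emph{Convexity and lower semicontinuity.} By Lemma~\ref{lem:Gconv1}, $\G$ is convex and lower semicontinuous on $L^2(\Omega)$. The same holds for $\Gamma$ by Lemma~\ref{lem:GammaLSC}, and for $\iota_{\K_K}$ by Lemma~\ref{lem:KKcc}. Hence $\E_K$ is convex and lower semicontinuous. For properness, take a constant $c>0$ with $c^{-d/2}|\Omega|\leq K$. Then $c\in\K_K$ and $\Gamma(c)=g(c)|\Omega|<+\infty$. Moreover $\G(c)=c^{-1}\G(1)$, because the supremum in \eqref{def:GLin} scales; equivalently, $\G(c)$ equals the energy of the uniformly elliptic solution. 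So $\E_K(c)<+\infty$.

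\emph{Closure of the domain.} The inclusion $\operatorname{dom}(\E_K)\subset\K_K$ together with closedness of $\K_K$ gives "$\subseteq$". For the reverse inclusion, fix $a\in\K_K$ and truncate from above: $a_k:=\min\{a,k\}$. Then $a_k\to a$ in $L^2(\Omega)$, and $a_k\leq a$ implies $a_k^{-d/2}\geq a^{-d/2}$. This is the wrong direction, so truncation alone may leave $\K_K$, and this is the step needing care. I would instead use $a_k:=\max\{\min\{a,k\},\,1/k\}$, or better $a_k:=\min\{a,k\}+1/k$, and then restore the moment constraint.

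Since $a^{-d/2}\in L^1$ with $\int a^{-d/2}\leq K$, the moment $\int(\min\{a,k\})^{-d/2}$ differs from $\int a^{-d/2}$ only on $\{a>k\}$. There the integrand is $k^{-d/2}$ instead of $a^{-d/2}\leq k^{-d/2}$, so the excess is at most $k^{-d/2}|\{a>k\}|\to0$. To absorb this excess, use the scaling trick of Lemma~\ref{lem:BmuK-dominated-density}. First replace $a$ by $(1+\varepsilon)a$, which has moment at most $(1+\varepsilon)^{-d/2}K<K$ and converges to $a$ in $L^2$. Then truncate at level $k$ large enough that the excess fits into the slack.

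The resulting $a_{\varepsilon,k}=\min\{(1+\varepsilon)a,k\}$ lies in $\K_K\cap L^\infty$. Its metabolic term $\Gamma(a_{\varepsilon,k})$ is finite because $g$ is finite on $[0,k]$. For the pumping term, monotonicity (Lemma~\ref{lem:EtildeMonotonicity}) does not help directly, since $a_{\varepsilon,k}$ is not bounded below. Instead I would use the state equation on $\K_K$: $\G(a_{\varepsilon,k})=\int f u\,\d x\leq C_\Omega^2K^{2/d}\Norm{f}_{L^2}^2$ by Lemma~\ref{lem:Riesz} and the embedding estimate. So $a_{\varepsilon,k}\in\operatorname{dom}(\E_K)$, and a diagonal choice gives \eqref{eq:closure-dom-EKweak}. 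In fact this bound shows $\G\leq C K^{2/d}\Norm{f}^2$ on all of $\K_K$, so the only obstruction to finiteness was $\Gamma$.

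\emph{Conclusion.} The rest is verbatim from Theorem~\ref{thm:GFexistence}. Rockafellar's theorem makes $\partial\E_K$ maximal monotone, with $\overline{\operatorname{dom}\partial\E_K}=\overline{\operatorname{dom}\E_K}=\K_K$. Br\'ezis--Komura then gives a unique contraction semigroup solution on $\K_K$ for each $a^0\in\K_K$. For $a^0\in\operatorname{dom}(\E_K)$, the regularizing effect for subdifferentials of convex lower semicontinuous functionals gives $a\in H^1(0,T;L^2)$ and the minimal-section equation \eqref{eq:GFweak-minimal}. The chain rule for $t\mapsto\E_K(a(t))$ then yields the energy identity \eqref{eq:GFweak-energy}. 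I expect the main obstacle to be the density step above, specifically keeping the approximants inside $\K_K$ while making $\Gamma$ finite.
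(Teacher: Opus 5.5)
Your proposal is correct and follows the paper's proof: the paper also truncates to $\min\{a,n\}$ and restores the inverse-moment bound with a multiplicative factor slightly above one, namely $c_n=\max\{1,(I_n/K)^{2/d}\}$ where $I_n=\int_\Omega\min\{a,n\}^{-d/2}\,\d x\to\int_\Omega a^{-d/2}\,\d x\le K$. Because the paper rescales after truncating, it needs only a single sequence rather than your diagonal argument, and it then uses Lemma~\ref{lem:Riesz}, Rockafellar and Br\'ezis--Komura exactly as you do; your observation that $\G\le C_\Omega^2K^{2/d}\|f\|_{L^2(\Omega)}^2$ on all of $\K_K$ is correct and slightly sharper than what the paper states.
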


\begin{proof}
The convexity and lower semicontinuity of $\G$, $\Gamma$, and
$\iota_{\K_K}$ follow from Lemmas~\ref{lem:Gconv1}, \ref{lem:GammaLSC}, and
\ref{lem:KKcc}, respectively. %Their sum therefore has the same properties.
To prove properness, choose a constant $c>0$ sufficiently large that
$|\Omega|c^{-d/2}\leq K$. Then $c\in\K_K$,
$\Gamma(c)=|\Omega|g(c)<+\infty$, and Lemma~\ref{lem:Riesz} gives
$\G(c)<+\infty$.

It remains to verify \eqref{eq:closure-dom-EKweak}. Since
$\operatorname{dom}(\E_K)\subseteq\K_K$ and $\K_K$ is closed, it suffices
to approximate an arbitrary $a\in\K_K$ by finite-energy elements of
$\K_K$. Set $p:=d/2$ and
\[
   \widetilde a_n:=\min\{a,n\},
   \qquad
   I_n:=\int_\Omega \widetilde a_n^{-p}\,\d x,
   \qquad
   c_n:=\max\left\{1,\left(\frac{I_n}{K}\right)^{1/p}\right\}.
\]
Since
$\widetilde a_n^{-p}=\max\{a^{-p},n^{-p}\}$, dominated convergence gives
$I_n\to\int_\Omega a^{-p}\,\d x\leq K$, and hence $c_n\to1$. The bounded
functions $a_n:=c_n\widetilde a_n$ satisfy
\[
   \int_\Omega a_n^{-p}\,\d x=c_n^{-p}I_n\leq K,
\]
so $a_n\in\K_K$. Moreover, $a_n\to a$ in $L^2(\Omega)$. Since $a_n$ is
bounded, the continuity of $g$ on bounded intervals gives
$\Gamma(a_n)<+\infty$, while Lemma~\ref{lem:Riesz} gives
$\G(a_n)<+\infty$. Thus $a_n\in\operatorname{dom}(\E_K)$ and
\eqref{eq:closure-dom-EKweak} follows.

Since $\E_K$ is proper, convex and lower semicontinuous, its $L^2$
subdifferential is maximal monotone. Moreover,
\[
   \overline{\operatorname{dom}(\partial\E_K)}^{\,L^2(\Omega)}
   =\overline{\operatorname{dom}(\E_K)}^{\,L^2(\Omega)}
   =\K_K.
\]
The Br\'{e}zis--Komura \cite{Brezis,Komura} theorem therefore yields the unique semigroup solution
of \eqref{eq:GFweak-abstract}. For finite-energy initial data, the standard
regularity theory for subgradient evolutions gives
\eqref{eq:GFweak-minimal}. The convex chain rule then yields
\[
   \frac{\d}{\d t}\E_K(a(t))
   =\left\langle\partial^\circ\E_K(a(t)),
      \frac{\d a}{\d t}(t)\right\rangle_{L^2}
   =-\left\|\frac{\d a}{\d t}(t)\right\|_{L^2(\Omega)}^2,
\]
and integration proves \eqref{eq:GFweak-energy}.
\end{proof}

We next identify the subdifferential of the pumping-energy part. For
$a\in\K_K$, let $N_{\K_K}(a)$ denote the $L^2$ normal cone to $\K_K$ at
$a$.

\begin{lemma}[Subdifferential of the constrained pumping energy]
\label{lem:subdiff-GK}
Let $a\in\K_K$, let $u[a]\in\Ha{a}$ be the solution supplied by
Lemma~\ref{lem:Riesz}, and assume that $\nabla u[a]\in L^4(\Omega)$. Then
\begin{equation}\label{eq:subdiff-GK}
   \partial\G_K(a)=-|\nabla u[a]|^2+N_{\K_K}(a).
\end{equation}
\end{lemma}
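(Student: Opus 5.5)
The plan is to follow the template of Lemma~\ref{lem:SubDiffFr}, with the cone $L^2_+(\Omega)$ replaced by the closed convex set $\K_K$. The cleanest route is the sum rule $\partial(\G+\iota_{\K_K})(a)=\partial\G(a)+N_{\K_K}(a)$. That rule needs a qualification condition, which is not available here: neither functional is continuous at any point of the other's domain in $L^2$. I would therefore argue directly with one-sided directional derivatives along segments inside $\K_K$.

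\emph{Step 1 (the ``$\supseteq$'' inclusion).} Let $\xi:=|\nabla u[a]|^2\in L^2(\Omega)$, write $u:=u[a]$, and let $\nu\in N_{\K_K}(a)$. I must show $\G_K(b)\geq\G_K(a)+\int_\Omega(\nu-\xi)(b-a)\,\d x$ for all $b\in\K_K$. Since $\int\nu(b-a)\leq0$, it suffices to prove $\G(b)\geq\G(a)-\int_\Omega(b-a)|\nabla u|^2\,\d x$. For this I use the common-core representation \eqref{def:GLin} with $u$ as a competitor:
\[
   \G(b)\geq 2\int_\Omega fu\,\d x-\int_\Omega b|\nabla u|^2\,\d x
   =\G(a)-\int_\Omega(b-a)|\nabla u|^2\,\d x .
\]
This uses $\int fu=\int a|\nabla u|^2=\G(a)$ from Lemma~\ref{lem:Riesz}. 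Plugging $u$ in is justified because $\nabla u\in L^4$ and $b\in L^2$ make the integral finite. To pass from $C_0^\infty$ to $u$, I would observe that $u\in W^{1,q}_0$ with $\nabla u\in L^4$, so $u\in W^{1,4}_0(\Omega)$ (Poincaré plus the zero trace). Then the density of $C_0^\infty$ in $W^{1,4}_0$, already used in Proposition~\ref{prop:flux-representation}, gives continuity of $\varphi\mapsto 2\int f\varphi-\int b|\nabla\varphi|^2$ along the approximations.

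\emph{Step 2 (the ``$\subseteq$'' inclusion).} Take $y\in\partial\G_K(a)$ and set $z:=y+\xi\in L^2$. The goal is $z\in N_{\K_K}(a)$, i.e.\ $\int z(b-a)\leq0$ for all $b\in\K_K$. First consider $b$ in the dense subset $A_{\mathrm{id}}(a)$ of Lemma~\ref{lem:BmuK-dominated-density}, applied with $\mu(s)=s$: these are $b\in\K_K$ with $b\leq na$. Convexity keeps $a+t(b-a)\in\K_K$, and the subgradient inequality gives $t\int y(b-a)\leq\G(a+t(b-a))-\G(a)$. I then compute the directional derivative exactly as in Lemma~\ref{lem:SubDiffFr}. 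Put $a_t:=a+t(b-a)$. Domination gives $a_t\leq(1+tn)a$, and hence $a\geq(1-t)^{-1}\cdots$; more precisely, $a_t\geq(1-t)a$ and $a_t\leq Ca$, so the norms of $\Ha{a}$ and $\Ha{a_t}$ are equivalent and $u_t:=u[a_t]$ and $u$ are admissible cross-tests. This yields $(\G(a_t)-\G(a))/t=-\int(b-a)\nabla u_t\cdot\nabla u$. Next I show $u_t\rightharpoonup u$ weakly in $\Ha{a}$: the family is bounded by equivalence of norms, and the limit is identified through uniqueness in Lemma~\ref{lem:Riesz}. Expanding $b-a$ and using $\nabla u\in L^4$ for the $b$-part, I would pass to the limit to get $D\G(a)[b-a]=-\int(b-a)|\nabla u|^2$. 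For the term $\int b\nabla u_t\cdot\nabla u$, write it as $\int (b\nabla u)\cdot\nabla u_t$. Here $b\nabla u\in L^{4/3}$, but I only know that $\nabla u_t$ converges weakly in $L^q$ with $q=2d/(d+2)$. A better choice is to use $b\leq na$: then $\int b\nabla u_t\cdot\nabla u=\int\sqrt{a}\nabla u_t\cdot(b/\sqrt a)\nabla u$, and $(b/\sqrt a)\nabla u\in L^2$ because $b^2/a\leq nb$ and $b|\nabla u|^2\in L^1$. Weak convergence of $\sqrt a\nabla u_t$ in $L^2$ then closes the argument. This gives $\int z(b-a)\leq0$ for all $b$ in the dense set. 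Since $z\in L^2$, the inequality extends by $L^2$-density to all of $\K_K$, so $z\in N_{\K_K}(a)$.

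\emph{Main obstacle.} The delicate point is the cross-testing and the limit passage in Step 2. Without a uniform lower bound on $a$, one has to verify that $\Ha{a_t}=\Ha{a}$ with equivalent norms. That follows from $(1-t)a\leq a_t\leq(1+tn)a$, which is precisely why the dominated directions of Lemma~\ref{lem:BmuK-dominated-density} are used. Once this is in place, the density step is immediate because $z$ lies in $L^2$, which is where the hypothesis $\nabla u[a]\in L^4$ enters.
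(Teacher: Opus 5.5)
Your proposal is correct and follows essentially the same route as the paper. The inclusion $\supseteq$ comes from inserting $u[a]\in W^{1,4}_0(\Omega)$ into the common-core supremum \eqref{def:GLin}. The inclusion $\subseteq$ comes from the one-sided derivative along coefficients $b\le na$ (cross-testing in the common space $\Ha{a_t}=\Ha{a}$, weak convergence $u_t\rightharpoonup u[a]$, and the bound $(b-a)/a\in L^\infty(\Omega)$), extended by the $L^2$-density of Lemma~\ref{lem:BmuK-dominated-density} with $\mu(s)=s$. The garbled aside ``$a\geq(1-t)^{-1}\cdots$'' should simply be deleted, since the two-sided comparison $(1-t)a\leq a_t\leq(1+tn)a$ is all that is used.
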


\begin{proof}
We first compute the directional derivative along the comparable
coefficients of Lemma~\ref{lem:BmuK-dominated-density}. Let $b\in\K_K$
satisfy $b\leq La$ almost everywhere for some $L>0$, and set
$a_t:=(1-t)a+tb$ for $0<t<1$. Then
\[
   (1-t)a\leq a_t\leq(1-t+tL)a,
\]
so the spaces $\Ha{a_t}$ and $\Ha{a}$ coincide with equivalent norms for
$t$ in a sufficiently small interval. Writing $u_t:=u[a_t]$ and
$u:=u[a]$, we may therefore cross-test their weak equations. This gives
\begin{equation}\label{eq:GK-difference-quotient}
   \frac{\G(a_t)-\G(a)}{t}
   =-\int_\Omega(b-a)\nabla u_t\cdot\nabla u\,\d x.
\end{equation}
The energy estimate from Lemma~\ref{lem:Riesz} and the preceding
comparability show that $(u_t)$ is bounded in $\Ha{a}$. Passing to a weakly
convergent subsequence, testing first against $C_0^\infty(\Omega)$, and using
uniqueness of the energy solution, we obtain
\[
   u_t\rightharpoonup u
   \qquad\text{weakly in }\Ha{a}
   \quad\text{as }t\to 0^+.
\]
Since $(b-a)/a\in L^\infty(\Omega)$, we may pass to the limit in
\eqref{eq:GK-difference-quotient} and find
\begin{equation}\label{eq:GK-directional-comparable}
   \lim_{t\downarrow0}\frac{\G(a+t(b-a))-\G(a)}{t}
   =-\int_\Omega |\nabla u[a]|^2(b-a)\,\d x.
\end{equation}

Let $\xi\in\partial\G_K(a)$. Applying the subgradient inequality to
$a+t(b-a)$ and then using \eqref{eq:GK-directional-comparable}, we obtain
\[
   \int_\Omega\bigl(\xi+|\nabla u[a]|^2\bigr)(b-a)\,\d x\leq0
\]
for every comparable $b$. Lemma~\ref{lem:BmuK-dominated-density} and the
$L^2$ regularity of $|\nabla u[a]|^2$ extend this inequality to every
$b\in\K_K$. Hence
\[
   \xi+|\nabla u[a]|^2\in N_{\K_K}(a).
\]

Conversely, $u[a]\in W^{1,q}_0(\Omega)$ with $q=2d/(d+2)$, while the
additional assumption $\nabla u[a]\in L^4(\Omega)$ implies
$u[a]\in W^{1,4}_0(\Omega)$. We may therefore approximate $u[a]$ by
$C_0^\infty(\Omega)$-functions in $W^{1,4}_0(\Omega)$.
Using $u[a]$ in the common-core representation \eqref{def:GLin}, we obtain
for every $b\in L^2_+(\Omega)$
\[
   \G(b)
   \geq2\int_\Omega fu[a],\d x
      -\int_\Omega b|\nabla u[a]|^2\,\d x
   =\G(a)-\int_\Omega|\nabla u[a]|^2(b-a)\,\d x.
\]
Thus $-|\nabla u[a]|^2\in\partial\G(a)$. Adding any element of
$N_{\K_K}(a)$ gives a subgradient of $\G_K$, proving
\eqref{eq:subdiff-GK}.
\end{proof}

The preceding formula gives a particularly simple evolution when
there is no metabolic cost.

\begin{corollary}[Pumping-energy flow]
\label{cor:GFweak-pumping}
Let $g\equiv0$ and let $a$ be the solution supplied by
Theorem~\ref{thm:GFweak-existence}. Assume that
\[
   \nabla u[a(t)]\in L^4(\Omega)
   \qquad\text{for almost every }t>0.
\]
Then
\begin{equation}\label{eq:GFweak-pumping}
   \partial_t a=|\nabla u[a]|^2
   \qquad\text{almost everywhere in }\Omega\times(0,+\infty).
\end{equation}
In particular, $a(\cdot,t)$ is nondecreasing in time almost everywhere,
and
\begin{equation}\label{eq:inverse-moment-dissipation}
   \int_\Omega a(x,t)^{-d/2}\,\d x
   \leq
   \int_\Omega a(x,s)^{-d/2}\,\d x
   \qquad(0\leq s\leq t).
\end{equation}
Moreover,
\begin{equation}\label{eq:GFweak-pumping-energy}
   \G(a(t))
   +\int_0^t\int_\Omega|\nabla u[a(s)]|^4\,\d x\,\d s
   =\G(a^0).
\end{equation}
\end{corollary}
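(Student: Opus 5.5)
With $g\equiv0$ we have $\Gamma\equiv0$, so $\E_K=\G_K$. The plan is to identify the minimal section $\partial^\circ\E_K(a(t))$ using Lemma~\ref{lem:subdiff-GK} and then read off the velocity from \eqref{eq:GFweak-minimal}.

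For almost every $t$ the hypothesis $\nabla u[a(t)]\in L^4(\Omega)$ holds. Lemma~\ref{lem:subdiff-GK} then gives
\[
\partial\G_K(a(t))=-|\nabla u[a(t)]|^2+N_{\K_K}(a(t)).
\]
We want to show that the minimal-norm element is $-|\nabla u|^2$, i.e.\ that the normal-cone contribution vanishes. The key observation is that $w:=|\nabla u|^2\geq0$, and that $\K_K$ is \emph{upward closed}: if $a\in\K_K$ and $h\geq0$ with $h\in L^2$, then $a+h\in\K_K$, because $s\mapsto s^{-d/2}$ is decreasing. Hence every $\nu\in N_{\K_K}(a)$ satisfies $\int\nu h\leq0$ for all $h\in L^2_+$, so $\nu\leq0$ a.e.

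Minimizing $\|-w+\nu\|_{L^2}$ over $\nu\le 0$ in the cone: since $-w\leq0$ and $\nu\leq0$, we have $|-w+\nu|\geq w$ pointwise. Equality holds if and only if $\nu=0$, and $\nu=0$ lies in the normal cone. Therefore
\[
\partial^\circ\E_K(a)=-|\nabla u[a]|^2,
\]
and \eqref{eq:GFweak-minimal} gives \eqref{eq:GFweak-pumping}. This requires finite-energy initial data so that \eqref{eq:GFweak-minimal} is available. For general $a^0\in\K_K$ I would use the regularizing effect of subgradient flows: $a(t)\in\operatorname{dom}(\partial\E_K)$ for $t>0$, and the equation holds for a.e.\ $t>0$.

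The remaining claims follow from \eqref{eq:GFweak-pumping}.
\begin{itemize}
\item \emph{Monotonicity.} Since $a\in H^1_{\mathrm{loc}}(0,\infty;L^2)$ is absolutely continuous in time, we get $a(t)-a(s)=\int_s^t|\nabla u|^2\geq0$ a.e.
\item \emph{Inverse-moment bound.} Monotonicity of $s\mapsto s^{-d/2}$ then yields \eqref{eq:inverse-moment-dissipation}.
\item \emph{Energy identity.} Insert $\|\partial_t a\|_{L^2}^2=\int|\nabla u|^4$ into \eqref{eq:GFweak-energy}, noting $\E_K=\G$ on $\K_K$. This gives \eqref{eq:GFweak-pumping-energy}.
\end{itemize}
If $a^0$ does not have finite energy, I would pass to the limit $s\downarrow0$ via lower semicontinuity and monotone convergence. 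Alternatively, interpret the identity as in the theorem, which presupposes $a^0\in\operatorname{dom}(\E_K)$.

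The main obstacle is the normal-cone step: justifying rigorously that elements of $N_{\K_K}(a)$ are nonpositive. The upward-closedness argument above handles it. One subtlety is that a test direction $a+h$ must stay in $L^2$, which holds for $h\in L^2_+$.
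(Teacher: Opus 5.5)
Your proof is correct and follows the paper's argument. Upward closedness of $\K_K$ forces every element of $N_{\K_K}(a)$ to be nonpositive, so the minimal-norm element of $-|\nabla u[a]|^2+N_{\K_K}(a)$ is $-|\nabla u[a]|^2$, and the remaining claims follow from \eqref{eq:GFweak-minimal} and \eqref{eq:GFweak-energy}. Your detour for infinite-energy initial data is unnecessary: with $g\equiv0$, the weak-cutoff estimate behind Lemma~\ref{lem:Riesz} gives $\G(a)\leq C_\Omega^2K^{2/d}\|f\|_{L^2(\Omega)}^2$ on all of $\K_K$, so $\operatorname{dom}(\E_K)=\K_K$ and every $a^0\in\K_K$ already has finite energy.
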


\begin{proof}
The set $\K_K$ is upward closed: if $b\in L^2(\Omega)$ and $b\geq a$
almost everywhere, then $b\in\K_K$. Consequently, every
$\eta\in N_{\K_K}(a)$ satisfies $\eta\leq0$ almost everywhere. Since
$|\nabla u[a]|^2\geq0$, formula \eqref{eq:subdiff-GK} implies that the
element of minimal norm in $\partial\G_K(a)$ is
$-|\nabla u[a]|^2$. Equation \eqref{eq:GFweak-pumping} now follows from
\eqref{eq:GFweak-minimal}. The monotonicity of $a$ and
\eqref{eq:inverse-moment-dissipation} follow immediately. Finally,
substituting \eqref{eq:GFweak-pumping} into
\eqref{eq:GFweak-energy} gives \eqref{eq:GFweak-pumping-energy}.
\end{proof}

Now we consider the case when the metabolic cost is present.
Define
\begin{equation}\label{def:Phi-weak}
   \Phi(a):=\int_\Omega a^{-d/2}\,\d x,
   \qquad
   z_a:=a^{-d/2-1}.
\end{equation}
The next lemma records the additional regularity needed to represent the
normal cone by a scalar multiplier.

\begin{lemma}[Normal cone to the weak-cutoff set]
\label{lem:normal-KK}
Let $a\in\K_K$ and assume that $z_a\in L^2(\Omega)$. Then
\begin{equation}\label{eq:normal-KK}
   N_{\K_K}(a)
   =
   \begin{cases}
      \{0\},&\Phi(a)<K,\\[1mm]
      \{-\Lambda z_a:\ \Lambda\geq0\},&\Phi(a)=K.
   \end{cases}
\end{equation}
\end{lemma}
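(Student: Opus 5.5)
The plan is to argue directly with admissible perturbations $a+th$, where $h$ is a bounded multiple of $a$. Three facts make this work. First, $\K_K$ is not known to have nonempty $L^2$-interior, since $\Phi$ is not continuous on $L^2(\Omega)$, so abstract sublevel-set calculus is not immediately available. Second, every $a\in\K_K$ satisfies $a>0$ a.e., because $a^{-d/2}\in L^1(\Omega)$. Third, the convexity of $s\mapsto s^{-d/2}$ gives the tangent inequality
\[
   s^{-d/2}\geq a^{-d/2}-\tfrac d2\,a^{-d/2-1}(s-a),\qquad s\geq 0,\ a>0.
\]
Throughout I use the class of directions
\[
   V:=\{h\in L^\infty(\Omega):\ |h|\leq \tfrac12\min\{a,1\}\ \text{a.e.}\}.
\]
For $h\in V$ and $|t|\leq1$ we have $a+th\geq a/2>0$ and $a+th\in L^2(\Omega)$. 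The pointwise bound $(a+th)^{-d/2}\leq 2^{d/2}a^{-d/2}$ then gives $\Phi(a+th)\to\Phi(a)$ by dominated convergence.

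\emph{Step 1 (directional derivative).} For $h\in V$ I will show
\[
   \lim_{t\to0}\frac{\Phi(a+th)-\Phi(a)}{t}=-\tfrac d2\int_\Omega z_a h\,\d x .
\]
By the mean value theorem the difference quotient is pointwise bounded by $\tfrac d2(a/2)^{-d/2-1}|h|\leq C z_a|h|$. The right-hand side lies in $L^1(\Omega)$ because $z_a\in L^2(\Omega)$ and $h\in L^\infty(\Omega)$, so dominated convergence applies. This is the only place the hypothesis $z_a\in L^2(\Omega)$ is needed beyond making $-\Lambda z_a$ an $L^2$ element.

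\emph{Step 2 (the case $\Phi(a)<K$).} Let $\eta\in N_{\K_K}(a)$ and $h\in V$. By the continuity of $\Phi$ along $a+th$, we have $a\pm th\in\K_K$ for small $t>0$. The normal-cone inequality in both directions gives $\int_\Omega\eta h\,\d x=0$. Now take $h=\tfrac12\min\{a,1\}\,\psi$ for arbitrary $\psi\in L^\infty(\Omega)$ with $|\psi|\leq1$. Since $\min\{a,1\}>0$ a.e., this forces $\eta=0$. The inclusion $0\in N_{\K_K}(a)$ is trivial.

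\emph{Step 3 (the case $\Phi(a)=K$).} For the inclusion $\supseteq$, integrate the tangent inequality with $s=b\in\K_K$. This gives $-\tfrac d2\int_\Omega z_a(b-a)\,\d x\leq\Phi(b)-\Phi(a)\leq0$, and hence $\int_\Omega(-\Lambda z_a)(b-a)\,\d x\leq0$ for every $\Lambda\geq0$.

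For the inclusion $\subseteq$, let $\eta\in N_{\K_K}(a)$. If $h\in V$ satisfies $\int_\Omega z_ah\,\d x>0$, then Step 1 shows $\Phi(a+th)<K$ for small $t>0$. So $a+th\in\K_K$, and therefore $\int_\Omega\eta h\,\d x\leq0$. Thus the linear functional $\ell(h)=\int_\Omega\eta h\,\d x$ is nonpositive on the open half-space $\{h\in\operatorname{span}V:\ \int_\Omega z_ah\,\d x>0\}$. By a closure argument, replacing $h$ by $h+\varepsilon h_0$, it is also nonpositive on the closed half-space. Here $h_0:=\tfrac12\min\{a,1\}\in V$, and $\int_\Omega z_ah_0\,\d x>0$, so the half-space is nondegenerate.

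The elementary Farkas lemma (a linear functional nonpositive on a closed half-space is a nonpositive multiple of its defining functional) then gives $\Lambda\geq0$ with $\int_\Omega(\eta+\Lambda z_a)h\,\d x=0$ for all $h\in\operatorname{span}V$. The same density argument as in Step 2 then gives $\eta=-\Lambda z_a$ a.e.

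I expect the main obstacle to be the bookkeeping in Step 3. Specifically, one must ensure that the perturbations remain in $L^2_+(\Omega)$, and that $\operatorname{span}V$ separates $L^1_{\mathrm{loc}}$-type functions such as $\eta+\Lambda z_a$. The truncation by $\min\{a,1\}$ is what makes both work.
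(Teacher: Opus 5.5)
Your proof is correct, and its main ingredients match the paper's:
\begin{itemize}
\item the supporting-line inequality for $s\mapsto s^{-d/2}$ gives the inclusion $\supseteq$ in the active case;
\item the negative one-sided derivative $-\frac d2\int_\Omega z_ah\,\d x$ shows that directions with $\int_\Omega z_ah\,\d x>0$ are feasible;
\item a shift by a strictly positive direction ($h+\varepsilon h_0$ for you, $h+\varepsilon a$ in the paper) handles the borderline case $\int_\Omega z_ah\,\d x=0$.
\end{itemize}

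The difference lies in the duality step. The paper first computes the full $L^2$ tangent cone. It approximates an arbitrary $h\in L^2(\Omega)$ by the truncations $h\mathbf 1_{\{1/n\leq a\leq n,\ |h|\leq n\}}$ and obtains $T_{\K_K}(a)=L^2(\Omega)$ in the inactive case and $T_{\K_K}(a)=\{h:\int_\Omega z_ah\,\d x\geq0\}$ in the active case. It then takes the polar cone in the Hilbert space. This uses $z_a\in L^2(\Omega)$ so that $h\mapsto\int_\Omega z_ah\,\d x$ is continuous on $L^2(\Omega)$. It also produces the tangent cone itself as a by-product, which is the object the paper's density arguments elsewhere refer to.

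You never identify the tangent cone. Instead you:
\begin{itemize}
\item test the normal-cone inequality only on the (non-dense) class of bounded directions dominated by $\frac12\min\{a,1\}$;
\item apply the purely algebraic Farkas lemma (kernel inclusion) on $\operatorname{span}V$;
\item recover $\eta$ pointwise from the a.e.\ positivity of the weight $\min\{a,1\}$.
\end{itemize}
This avoids all $L^2$-approximation and topology.

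Your route in fact gives slightly more than you state. Since $|h|\leq a/2$ already yields $z_a|h|\leq\frac12a^{-d/2}\in L^1(\Omega)$, Step 1 does not need $z_a\in L^2(\Omega)$, contrary to your remark. So your inclusion $N_{\K_K}(a)\subseteq\{-\Lambda z_a:\Lambda\geq0\}$ holds without that hypothesis. The hypothesis is needed only so that $-\Lambda z_a$ is an element of $L^2(\Omega)$ in the reverse inclusion. Without it, your argument shows $N_{\K_K}(a)=\{0\}$ even at an active constraint.
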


\begin{proof}
We first identify the tangent cone. If $\Phi(a)<K$, localize and truncate an
arbitrary $h\in L^2(\Omega)$ by
\[
   h_n:=h\,\mathbf 1_{\{1/n\leq a\leq n,\ |h|\leq n\}}.
\]
Then $h_n\to h$ in $L^2(\Omega)$, and for each fixed $n$ one has
$a+t h_n>0$ and $\Phi(a+t h_n)<K$ for all sufficiently small $t>0$.
Hence $h_n\in T_{\K_K}(a)$ and therefore
$T_{\K_K}(a)=L^2(\Omega)$.

Suppose next that $\Phi(a)=K$. Convexity of $s\mapsto s^{-d/2}$ gives, for
every $b\in\K_K$,
\[
   \Phi(b)\geq\Phi(a)-\frac d2\int_\Omega z_a(b-a)\,\d x.
\]
Since $\Phi(b)\leq\Phi(a)=K$, every tangent direction $h$ satisfies
$\int_\Omega z_a h\,\d x\geq0$. Conversely, if this integral is strictly
positive, the preceding localized truncations satisfy
$\int_\Omega z_a h_n\,\d x>0$ for all sufficiently large $n$. For each such
$n$,
\[
   \left.\frac{\d}{\d t}\Phi(a+t h_n)\right|_{t=0+}
   =-\frac d2\int_\Omega z_a h_n\,\d x<0,
\]
and hence $a+t h_n\in\K_K$ for all sufficiently small $t>0$. Thus
$h\in T_{\K_K}(a)$. If $\int_\Omega z_a h\,\d x=0$, apply the preceding
argument to $h+\varepsilon a$ and let $\varepsilon\to 0^+$, noting that
$\int_\Omega z_a a\,\d x=\Phi(a)=K$. Consequently,
\[
   T_{\K_K}(a)
   =\left\{h\in L^2(\Omega):
      \int_\Omega z_a h\,\d x\geq0\right\}
   \qquad\text{when }\Phi(a)=K.
\]
Taking the polar cone gives \eqref{eq:normal-KK}; the factor $d/2$ has been
absorbed into $\Lambda$.
\end{proof}

We can now identify the full pumping--metabolic flow. The following
assumptions are stronger than the weak cutoff itself and are stated
explicitly for this reason.

\begin{corollary}[Pointwise form of the weak-cutoff flow]
\label{cor:GFweak-identified}
Let the assumptions of Theorem~\ref{thm:GFweak-existence} hold, assume in
addition that $g\in C^1([0,+\infty))$, and let $a$ be the solution of
\eqref{eq:GFweak-abstract} with $a^0\in\operatorname{dom}(\E_K)$. Suppose
that, for almost every $t>0$,
\begin{equation}\label{eq:GFweak-regularity}
   \nabla u[a(t)]\in L^4(\Omega),
   \qquad
   g'(a(t))\in L^2(\Omega),
   \qquad
   a(t)^{-d/2-1}\in L^2(\Omega),
\end{equation}
and that the subdifferential sum rule
\begin{equation}\label{eq:GFweak-sum-rule}
   \partial\E_K(a(t))
   =\partial\G_K(a(t))+\partial\Gamma(a(t))
\end{equation}
is valid. Then, with $u(t):=u[a(t)]$ and
$z_a(t):=a(t)^{-d/2-1}$,
\begin{equation}\label{eq:GFweak-identified}
   \partial_t a
   =|\nabla u|^2-g'(a)+\Lambda(t)z_a(t)
   \qquad\text{almost everywhere in }\Omega\times(0,+\infty),
\end{equation}
where $\Lambda(t)\geq0$ satisfies
\begin{equation}\label{eq:GFweak-complementarity}
   \Lambda(t)\bigl(\Phi(a(t))-K\bigr)=0.
\end{equation}
More precisely, the minimal-section selection in
\eqref{eq:GFweak-minimal} gives
\begin{equation}\label{eq:GFweak-multiplier}
   \Lambda(t)
   =
   \begin{cases}
      0,&\Phi(a(t))<K,\\[2mm]
      \displaystyle
      \frac{\left[
         \int_\Omega
         \bigl(g'(a(t))-|\nabla u(t)|^2\bigr)z_a(t)\,\d x
      \right]_+}
      {\displaystyle\int_\Omega z_a(t)^2\,\d x},
      &\Phi(a(t))=K.
   \end{cases}
\end{equation}
\end{corollary}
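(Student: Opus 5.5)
The identification reduces to computing the minimal-norm element of $\partial\E_K(a(t))$ for almost every $t$. By Theorem~\ref{thm:GFweak-existence} we have $\partial_t a=-\partial^\circ\E_K(a(t))$ for almost every $t$. Fix such a $t$ at which, in addition, \eqref{eq:GFweak-regularity} and the sum rule \eqref{eq:GFweak-sum-rule} hold, and write $a=a(t)$, $u=u[a]$, $z=z_a$.

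\emph{Step 1: the subdifferential.} Lemma~\ref{lem:subdiff-GK} gives $\partial\G_K(a)=-|\nabla u|^2+N_{\K_K}(a)$. For $\Gamma$, note that $a>0$ almost everywhere and $g'(a)\in L^2(\Omega)$. The pointwise convexity inequality $g(b)\geq g(a)+g'(a)(b-a)$ therefore shows $g'(a)\in\partial\Gamma(a)$. For the reverse inclusion, argue as in the derivation of \eqref{eq:subdiff_exp}: difference quotients along $a+th$ with bounded localized $h$ converge to $\int_\Omega g'(a)h\,\d x$. Since $a>0$ almost everywhere, the normal cone of $L^2_+(\Omega)$ at $a$ is trivial, so $\partial\Gamma(a)=\{g'(a)\}$. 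Combining this with the sum rule and Lemma~\ref{lem:normal-KK} gives
\[
   \partial\E_K(a)=\bigl\{w-\Lambda z:\ \Lambda\in\Lambda_{\mathrm{adm}}\bigr\},\qquad w:=g'(a)-|\nabla u|^2\in L^2(\Omega),
\]
where $\Lambda_{\mathrm{adm}}=\{0\}$ if $\Phi(a)<K$ and $\Lambda_{\mathrm{adm}}=[0,\infty)$ if $\Phi(a)=K$.

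\emph{Step 2: minimal section.} If $\Phi(a)<K$, the subdifferential is the singleton $\{w\}$, so $\partial_t a=-w=|\nabla u|^2-g'(a)$ and $\Lambda=0$. If $\Phi(a)=K$, we minimize the convex quadratic
\[
   \Lambda\mapsto\|w-\Lambda z\|_{L^2}^2=\|w\|^2-2\Lambda\int_\Omega wz\,\d x+\Lambda^2\|z\|^2
\]
over $\Lambda\geq0$. Since $\|z\|_{L^2}>0$ (because $z>0$ almost everywhere), the unique minimizer is the projection $\Lambda=\bigl[\int_\Omega wz\,\d x\bigr]_+/\int_\Omega z^2\,\d x$, which is exactly \eqref{eq:GFweak-multiplier}. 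Then $\partial_t a=-(w-\Lambda z)$, which is \eqref{eq:GFweak-identified}. Complementarity \eqref{eq:GFweak-complementarity} holds in both cases by construction.

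\emph{Main obstacle.} The only delicate point is the claim $\partial\Gamma(a)=\{g'(a)\}$, in particular excluding extra normal-cone contributions from the constraint $a\geq0$ hidden in $\Gamma$. Positivity of $a$ almost everywhere, which follows from $a\in\K_K$, disposes of this, since $N_+(a)=\{0\}$ when $\{a=0\}$ is null. Measurability of $t\mapsto\Lambda(t)$ follows from the explicit formula, given the measurability of $t\mapsto\partial_t a$ and of the data.
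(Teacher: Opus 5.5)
Your proposal is correct and follows essentially the paper's own proof. Like the paper, you combine the sum rule with Lemma~\ref{lem:subdiff-GK}, the identity $\partial\Gamma(a)=\{g'(a)\}$ (which holds because $a>0$ a.e.) and Lemma~\ref{lem:normal-KK} to describe $\partial\E_K(a(t))$, and then obtain $\Lambda(t)$ by minimizing the quadratic $\Lambda\mapsto\|g'(a)-|\nabla u|^2-\Lambda z_a\|_{L^2}^2$ over $\Lambda\geq0$; your explicit justification of $\partial\Gamma(a)=\{g'(a)\}$ just fills in a step the paper states in one line.
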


\begin{proof}
Since $a(t)>0$ almost everywhere, the assumptions on $g$ and
$g'(a(t))$ give
\[
   \partial\Gamma(a(t))=\{g'(a(t))\}.
\]
Combining the sum rule \eqref{eq:GFweak-sum-rule},
Lemma~\ref{lem:subdiff-GK}, and Lemma~\ref{lem:normal-KK}, we obtain
\[
   \partial\E_K(a)
   =
   \left\{
      g'(a)-|\nabla u[a]|^2-\Lambda z_a:
      \Lambda\geq0,\quad \Lambda(\Phi(a)-K)=0
   \right\}.
\]
At an active constraint, the element of smallest $L^2$ norm is obtained by
minimizing
\[
   \left\|g'(a)-|\nabla u[a]|^2-\Lambda z_a\right\|_{L^2(\Omega)}^2
   \qquad\text{over }\Lambda\geq0,
\]
which gives \eqref{eq:GFweak-multiplier}. Equations
\eqref{eq:GFweak-identified} and \eqref{eq:GFweak-complementarity} now follow
from \eqref{eq:GFweak-minimal}.
\end{proof}

\begin{remark}
The regularity assumptions in \eqref{eq:GFweak-regularity} are not
consequences of the weak cutoff. In particular,
$a^{-d/2}\in L^1(\Omega)$ does not imply
$a^{-d/2-1}\in L^2(\Omega)$. Likewise, the sum rule
\eqref{eq:GFweak-sum-rule} is a separate qualification because
$\K_K$ has empty interior in $L^2(\Omega)$. A convenient sufficient
condition for this sum rule is that $g'$ have at most linear growth. Indeed,
one may then extend $g$ affinely to $(-\infty,0)$; the resulting integral
functional is continuously differentiable on $L^2(\Omega)$, while its values
on $\K_K$ agree with those of $\Gamma$.

For $g\equiv0$, formula \eqref{eq:GFweak-multiplier} gives
$\Lambda(t)=0$, consistently with Corollary~\ref{cor:GFweak-pumping}. Thus
the inverse-moment constraint is preserved by the pumping-energy flow but
does not exert an active force: the velocity $|\nabla u[a]|^2$ increases the
conductivity and therefore moves the solution further into the admissible
set.
\end{remark}

%%%%%%%%%%%%%%%%%%%%%%%%%%%%%%%%%%%%%%%
\section{Nonlinear mobility}\label{sec:NonlinearMobility}
The preceding sections treated the physical conductivity $a$ itself as the
variable of the pumping-energy functional. We now introduce a constitutive law
$a=\mu(b)$, where $b$ is the quantity that evolves. Although the relaxed
pumping-energy functional is convex in $a$, its composition with $\mu$ need
not retain the
same curvature in $b$. The purpose of this section is to identify, through a
formal second-variation calculation, two structural conditions on $\mu$:
concavity of $\mu$, which leads to formal convexity of the pumping-energy
functional, and
concavity of $1/\mu$, which provides a sufficient criterion for its formal
concavity. These conditions motivate
the two nonlinear regimes studied in Sections~\ref{sec:ConcaveMobility}
and~\ref{sec:ReciprocalConcave}.

Let $\mu:[0,\infty)\to[0,\infty)$ satisfy $\mu(s)>0$ for $s>0$ and be twice
continuously differentiable on $(0,\infty)$.
For $b>0$ almost everywhere on $\Omega$, let us consider the functional
\begin{equation}\label{eq:J-def}
  J_\mu(b) := \frac12 \int_\Omega \mu(b)\, |\nabla u[\mu(b)]|^2 \,\d x,
\end{equation}
where $u=u[\mu(b)]$ is the solution of the Dirichlet problem
\begin{equation}\label{eq:pde}
   \begin{aligned}
      -\operatorname{div} \left( \mu(b) \nabla u \right) &= f \qquad\mbox{in } \Omega,\\
      u &= 0 \qquad\mbox{on } \partial\Omega,
   \end{aligned}
\end{equation}
whenever it is uniquely defined.

Under the uniformly elliptic assumptions imposed below, the energy
representation from Section~\ref{sec:WeakCutoff} gives
\[
   J_\mu(b)=\frac12\G(\mu(b)).
\]
Thus the calculation concerns the nonlinear composition of the
relaxed pumping-energy functional introduced above; the factor $1/2$ is only a
convenient normalization.

To justify the calculation in a simple auxiliary setting, take
$b_0,b_1\in L^\infty(\Omega)$ with
$\mathop{\rm ess\,inf}_\Omega b_0>0$. For all sufficiently small $|t|$, the
functions $b_0+t b_1$ then take their values in a compact subset of
$(0,+\infty)$. Consequently, there are constants $0<c<C<+\infty$ such that
\[
   c\leq\mu(b_0+t b_1)\leq C
   \qquad\text{almost everywhere in }\Omega
\]
for all such $t$, and the coefficient depends twice differentiably on $t$ in
$L^\infty(\Omega)$. Standard parameter dependence for uniformly elliptic
problems then gives twice differentiable dependence of the solution in
$H^1_0(\Omega)$. The calculation below is therefore rigorous under these
temporary assumptions, but it is used only to identify the relevant
curvature conditions. The subsequent sections derive the corresponding
variational conclusions under substantially weaker hypotheses.

Fix $b_0$ and a perturbation direction $b_1$, and set
\begin{equation*}
  u_j:=\left.\frac{\d^j}{\d t^j}u[\mu(b_0+t b_1)]\right|_{t=0},
  \quad j=0,1,2,\qquad
  \mu_0:=\mu(b_0).
\end{equation*}
%Equivalently, for $a_t=a_0+t a_1$,
%\begin{equation*}   u[a_t]=u_0+t u_1+\frac{t^2}{2}u_2+o(t^2).   \end{equation*}
In the formulas below, we denote
\begin{equation*}
  \mu'_0:=\mu'(b_0),\qquad \mu''_0:=\mu''(b_0).
\end{equation*}
%and derivatives of $\mu$ are evaluated pointwise at $a_0(x)$.
%The derivatives of the state are determined by the linearized elliptic equations.
Differentiating \eqref{eq:pde} once gives
\begin{equation}\label{eq:first-diff-pde}
  -\operatorname{div}\bigl(\mu'_0 b_1\nabla u_0+\mu_0\nabla u_1\bigr)=0,
  \qquad u_1|_{\partial\Omega}=0.
\end{equation}
A second differentiation gives
\begin{equation}\label{eq:second-diff-pde}
  -\operatorname{div}\bigl(
    \mu''_0 b_1^2\nabla u_0
    +2\mu'_0 b_1\nabla u_1
    +\mu_0\nabla u_2
  \bigr)=0,
  \qquad u_2|_{\partial\Omega}=0.
\end{equation}
We first compute the variation of $J_\mu$. Direct differentiation of
\eqref{eq:J-def} at $b_0$ in the direction $b_1$ gives
\begin{align*}
  DJ_\mu(b_0)[b_1]
  &= \frac12\int_\Omega \mu'_0 b_1 |\nabla u_0|^2\d x
     +\int_\Omega \mu_0\nabla u_0\cdot\nabla u_1\d x.
\end{align*}
The term involving the implicit state variation can be eliminated using the
linearized equation. Indeed, testing \eqref{eq:first-diff-pde} by $u_0$ gives
\begin{equation*}
  \int_\Omega \mu_0\nabla u_0\cdot\nabla u_1\d x
  = -\int_\Omega \mu'_0 b_1|\nabla u_0|^2\d x,
\end{equation*}
and therefore
\[
  DJ_\mu(b_0)[b_1]
  = -\frac12\int_\Omega \mu'_0 b_1|\nabla u_0|^2\d x.
\]
%Thus the first variation depends only on the unperturbed state $u_0$.

The second variation of $J_\mu(b_0)$ in the direction $b_1$ reads
\begin{align*}
  D^2J_\mu(b_0)[b_1,b_1]
  &=\frac12\int_\Omega \mu''_0 b_1^2|\nabla u_0|^2\d x
    +\int_\Omega \mu'_0 b_1\nabla u_0\cdot\nabla u_1\d x\notag\\
  &\quad
    +\int_\Omega \mu'_0 b_1\nabla u_0\cdot\nabla u_1\d x
    +\int_\Omega \mu_0|\nabla u_1|^2\d x
    +\int_\Omega \mu_0\nabla u_0\cdot\nabla u_2\d x\notag\\
  &=\frac12\int_\Omega \mu''_0 b_1^2|\nabla u_0|^2\d x
    +2\int_\Omega \mu'_0 b_1\nabla u_0\cdot\nabla u_1\d x\notag\\
  &\quad
    +\int_\Omega \mu_0|\nabla u_1|^2\d x
    +\int_\Omega \mu_0\nabla u_0\cdot\nabla u_2\d x.
\end{align*}
%The linearized state equations allow us to eliminate these auxiliary terms.
Testing \eqref{eq:first-diff-pde} by $u_1$ gives
\begin{equation}\label{eq:first-diff-test-u1}
  \int_\Omega \mu_0|\nabla u_1|^2\d x
  =-\int_\Omega \mu'_0 b_1\nabla u_0\cdot\nabla u_1\d x.
\end{equation}
Substituting this identity into the preceding expression eliminates the mixed
term and gives
\[
  D^2J_\mu(b_0)[b_1,b_1]
  =\frac12\int_\Omega \mu''_0 b_1^2|\nabla u_0|^2\d x
    -\int_\Omega \mu_0|\nabla u_1|^2\d x
    +\int_\Omega \mu_0\nabla u_0\cdot\nabla u_2\d x.
\]
%It remains to eliminate the second state derivative.
Moreover, testing \eqref{eq:second-diff-pde} by $u_0$ gives
\[
  \int_\Omega \mu_0\nabla u_2\cdot\nabla u_0\d x
  =-\int_\Omega \mu''_0 b_1^2|\nabla u_0|^2\d x
   -2\int_\Omega \mu'_0 b_1\nabla u_1\cdot\nabla u_0\,\d x.
\]
Combining this relation with \eqref{eq:first-diff-test-u1} yields
\begin{equation}\label{eq:second-var-J-final}
  D^2J_\mu(b_0)[b_1,b_1]
  =-\frac12\int_\Omega \mu''_0 b_1^2|\nabla u_0|^2\d x
   +\int_\Omega \mu_0|\nabla u_1|^2\d x.
\end{equation}
%The two terms in \eqref{eq:second-var-J-final} have a transparent interpretation. The second one is the nonnegative contribution generated by
%the response of the elliptic state, whereas the sign of the first one is determined directly by the curvature of $\mu$.
We observe that if $\mu$ is concave on $(0,\infty)$, i.e., $\mu''\leq0$, then
\begin{equation*}
  D^2J_\mu(b_0)[b_1,b_1]\geq 0,
\end{equation*}
and thus $J_\mu$ is formally convex.

Formula \eqref{eq:second-var-J-final} also yields a complementary sufficient criterion for concavity.
%In this case the positive state-response term must be controlled by the term containing $\mu''$.
Applying the Cauchy--Schwarz inequality followed by Young's inequality in
\eqref{eq:first-diff-test-u1}, we obtain
\begin{align}\label{eq:young-estimate}
  \int_\Omega \mu_0|\nabla u_1|^2\d x
  &=-\int_\Omega \mu'_0 b_1\nabla u_0\cdot\nabla u_1\d x\notag\\
  &=-\int_\Omega \sqrt{\mu_0}\nabla u_1\cdot
       \frac{\mu'_0}{\sqrt{\mu_0}} b_1\nabla u_0\d x\notag\\
  &\leq \frac{1}{2}\int_\Omega \mu_0|\nabla u_1|^2\d x
       +\frac{1}{2}\int_\Omega
       \frac{(\mu'_0)^2}{\mu_0} b_1^2|\nabla u_0|^2\d x,
\end{align}
hence
\begin{equation}\label{eq:u1-estimate}
  \int_\Omega \mu_0|\nabla u_1|^2\d x
  \leq  \int_\Omega \frac{(\mu'_0)^2}{\mu_0} b_1^2|\nabla u_0|^2\d x.
\end{equation}
Substituting \eqref{eq:u1-estimate} into \eqref{eq:second-var-J-final} gives
\begin{equation}\label{eq:D2J-upper-epsilon}
  D^2J_\mu(b_0)[b_1,b_1]
  \leq  \int_\Omega \left( -\frac{\mu''_0}{2} + \frac{(\mu'_0)^2}{\mu_0} \right) b_1^2|\nabla u_0|^2\d x.
\end{equation}
Therefore, a sufficient pointwise condition for the formal concavity of
$J_\mu$ is
\begin{equation}\label{eq:concavity-condition}
  -\frac{\mu''(s)}{2}+\frac{(\mu'(s))^2}{\mu(s)}\leq 0.
\end{equation}
The identity
\[
   \left(\frac{1}{\mu}\right)''  = \frac{2(\mu')^2-\mu\mu''}{\mu^3}
\]
shows that \eqref{eq:concavity-condition} is equivalent to $(1/\mu)''\leq0$ on $(0,+\infty)$.
%Thus the reciprocal mobility $1/\mu$, rather than $\mu$ itself, is concave in this second regime.

The calculation therefore singles out two complementary cases. When $\mu$
is concave, the pumping-energy functional is formally convex and can be
combined directly
with a convex metabolic cost; this is the setting of
Section~\ref{sec:ConcaveMobility}. When $1/\mu$ is concave, the pumping-energy
functional is formally concave, so that its negative has the curvature needed for a convex
energy; this case is treated in Section~\ref{sec:ReciprocalConcave}.
The subsequent sections establish these curvature properties through primal and flux variational
representations. Lower semicontinuity is established in the concave-mobility regime. In the reciprocal-
concave regime it is recovered on the separately specified uniformly elliptic box.

%%%%%%%%%%%%%%%%%%%%%%%%%%%%%%%%%%%%%%%
\section{Concave mobility}\label{sec:ConcaveMobility}
We begin with the first of the two curvature regimes identified in
Section~\ref{sec:NonlinearMobility}. Our aim is to turn the formal convexity
observation into a variational statement and then identify the corresponding
subdifferential.

Let $\mu\in C([0,+\infty))\cap C^1((0,+\infty))$ satisfy $\mu(0) = 0$ and
$\mu(s)>0$ for $s>0$, and assume that $\mu$ is concave on $[0,+\infty)$,
\[ 
   \mu(t s_1+(1-t)s_2) \geq t\mu(s_1)+(1-t)\mu(s_2) \qquad
    \mbox{for all } t\in[0,1] \mbox{ and } s_1,s_2\geq 0.
\]
Since a positive concave function on an unbounded interval cannot eventually
decrease, $\mu$ is nondecreasing and $\mu'$ is non-increasing on
$(0,+\infty)$. Moreover, $\mu$ has at most linear growth: there are constants
$C_1,C_2>0$ such that
\[
   \mu(s)\leq C_1 + C_2 s \qquad\text{for all }s\geq 0.
\]
In particular, the limit $\lim_{s\to+\infty}\mu(s)$ exists in
$[0,+\infty]$, and the associated Nemytskii operator
$\mu:L^2_+(\Omega)\longrightarrow L^2_+(\Omega)$
is continuous on $L^2_+(\Omega)$ with respect to the $L^2(\Omega)$-topology.
To avoid pathological situations we shall for the following %slightly strengthen the assumption of concavity of $\mu$ and
impose that $\mu$ is strictly increasing on $(0,\infty)$.

\subsection{The transformed admissible set and its convexity}
\iffalse
We first consider the genuinely degenerate case $\mu(0)=0$, for which the
inverse-moment constraint from Section~\ref{sec:WeakCutoff} is needed.
The uniformly positive case $\mu(0)=r>0$ will be recovered after the
directional calculation.
Define the transformed admissible set
\begin{equation}\label{def:BmuK}
   \mathcal B_{\mu,K}:=\left\{b\in L^2_+(\Omega):\,
   \mu(b)\in\K_K\right\}
   =\left\{b\in L^2_+(\Omega):\,
   \int_\Omega\mu(b)^{-d/2}\,\d x\leq K\right\}.
\end{equation}
\fi
With the normalization used in \eqref{eq:J-def}, the relaxed pumping-energy
functional satisfies
\[
   \G(\mu(b))=2J_\mu(b)
\]
whenever the expression \eqref{eq:J-def} defining $J_\mu$ is meaningful.
We define the functional $\mathcal H:L^2(\Omega)\to\Rext$ by setting,
for $b\in L^2(\Omega)$,
\[
   \mathcal H(b) =
    \begin{cases}
       \G(\mu(b)),  & \text{if }b\in\mathcal B_{\mu,K},\\
        +\infty, & \text{otherwise},
\end{cases}
\]
with $\mathcal B_{\mu,K}$ defined in \eqref{def:BmuK-density}.
Note that if $\mu(b)\in\K_K$, we have
\begin{align*}
   \mathcal H(b)
   &= \int_\Omega \mu(b) \left|\nabla u[\mu(b)]\right|^2\d x \\
   &= \sup_{u\in\Ha{\mu(b)}}
      \left( 2\int_\Omega f u \,\d x
      - \int_\Omega \mu(b)|\nabla u|^2 \d x \right).
\end{align*}
As before, $u=u[\mu(b)]$ denotes the unique energy solution of
\begin{equation}  \label{eq:Poisson_mub}
   -\operatorname{div}\left(\mu(b)\nabla u\right)=f \qquad\text{in }\Omega,
\end{equation}
subject to the homogeneous Dirichlet boundary condition on $\partial\Omega$.

\begin{proposition}\label{prop:H-convex-lsc}
The set $\mathcal B_{\mu,K}$ is closed and convex in $L^2(\Omega)$. Moreover,
\[
   \operatorname{dom}\mathcal H=\mathcal B_{\mu,K},
\]
and $\mathcal H$ is convex and lower semicontinuous on $L^2(\Omega)$.
\end{proposition}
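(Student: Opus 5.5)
We split the argument into three parts: closedness and convexity of $\mathcal B_{\mu,K}$, identification of the domain, and convexity and lower semicontinuity of $\mathcal H$. The two structural properties of $\mu$ that drive everything are concavity together with monotonicity, combined with the order-reversing property of $\G$ from Lemma~\ref{lem:EtildeMonotonicity}.

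\emph{Convexity of $\mathcal B_{\mu,K}$.} Let $b_0,b_1\in\mathcal B_{\mu,K}$ and $t\in[0,1]$, and set $b_t:=tb_1+(1-t)b_0\in L^2_+(\Omega)$. Concavity of $\mu$ gives $\mu(b_t)\geq t\mu(b_1)+(1-t)\mu(b_0)=:m_t$ pointwise. The function $s\mapsto s^{-d/2}$ is decreasing and convex, so
\[
   \mu(b_t)^{-d/2}\leq m_t^{-d/2}\leq t\mu(b_1)^{-d/2}+(1-t)\mu(b_0)^{-d/2}.
\]
Integrating yields $\int_\Omega\mu(b_t)^{-d/2}\,\d x\leq K$. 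The linear growth of $\mu$ ensures $\mu(b_t)\in L^2(\Omega)$, hence $b_t\in\mathcal B_{\mu,K}$.

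\emph{Closedness of $\mathcal B_{\mu,K}$.} Let $b_n\to b$ in $L^2(\Omega)$ with $b_n\in\mathcal B_{\mu,K}$. Continuity of the Nemytskii operator $\mu$ on $L^2_+(\Omega)$ gives $\mu(b_n)\to\mu(b)$ in $L^2(\Omega)$, and $L^2_+(\Omega)$ is closed, so $b\geq 0$. Lemma~\ref{lem:KKcc} then gives $\mu(b)\in\K_K$, that is, $b\in\mathcal B_{\mu,K}$.

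\emph{Domain.} By definition $\operatorname{dom}\mathcal H\subseteq\mathcal B_{\mu,K}$. Conversely, for $b\in\mathcal B_{\mu,K}$ we have $\mu(b)\in\K_K$. Lemma~\ref{lem:Riesz} supplies the energy solution $u[\mu(b)]$, and the identity $\G(\mu(b))=\int_\Omega\mu(b)|\nabla u[\mu(b)]|^2\,\d x$, together with the Riesz bound, shows that this value is at most $C_\Omega^2K^{2/d}\|f\|_{L^2(\Omega)}^2<\infty$. Hence equality of the sets holds.

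\emph{Convexity of $\mathcal H$.} This is the step that uses no differentiation of the pressure. With $b_t$ and $m_t$ as above, $\mu(b_t)\geq m_t\geq0$, and both functions lie in $L^2_+(\Omega)$. Lemma~\ref{lem:EtildeMonotonicity} followed by convexity of $\G$ (Lemma~\ref{lem:Gconv1}) gives
\[
   \mathcal H(b_t)=\G(\mu(b_t))\leq\G(m_t)\leq t\,\G(\mu(b_1))+(1-t)\,\G(\mu(b_0)).
\]
Since $\mathcal B_{\mu,K}$ is convex and $\mathcal H=+\infty$ outside it, this proves convexity on all of $L^2(\Omega)$.

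\emph{Lower semicontinuity of $\mathcal H$.} This is the point that needs the most care, although it still reduces to earlier results. We write $\mathcal H=\G\circ\mu+\iota_{\mathcal B_{\mu,K}}$, where $\G\circ\mu$ is understood on $L^2_+(\Omega)$. Take $b_n\to b$ in $L^2(\Omega)$ with $\liminf_n\mathcal H(b_n)<\infty$, and pass to a subsequence lying in $\mathcal B_{\mu,K}$ along which the $\liminf$ is attained. Closedness of $\mathcal B_{\mu,K}$ gives $b\in\mathcal B_{\mu,K}$. Continuity of the Nemytskii operator gives $\mu(b_n)\to\mu(b)$ strongly in $L^2(\Omega)$. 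Strong lower semicontinuity of $\G$ (Lemma~\ref{lem:Gconv1}) then yields $\G(\mu(b))\leq\liminf_n\G(\mu(b_n))$.

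The only subtlety is the continuity of $\mu:L^2_+(\Omega)\to L^2_+(\Omega)$. This follows from continuity of $\mu$, its linear growth, and dominated convergence along a.e.\ convergent subsequences, together with the usual subsequence argument.
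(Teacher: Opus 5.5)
Your proof is correct and follows the paper's argument. Convexity of $\mathcal H$ comes from concavity of $\mu$, the order-reversal Lemma~\ref{lem:EtildeMonotonicity} and convexity of $\G$, and lower semicontinuity comes from closedness of $\mathcal B_{\mu,K}$, continuity of the Nemytskii map and strong lower semicontinuity of $\G$. The only deviation is in the closedness of $\mathcal B_{\mu,K}$: you push the sequence through the Nemytskii operator into the closed set $\K_K$ of Lemma~\ref{lem:KKcc}, whereas the paper applies Fatou's lemma directly to the convex lower semicontinuous composite integrand $s\mapsto\mu(s)^{-d/2}$. You also make explicit the Riesz bound $\G(\mu(b))\leq C_\Omega^2K^{2/d}\|f\|_{L^2(\Omega)}^2$ behind $\operatorname{dom}\mathcal H=\mathcal B_{\mu,K}$, which the paper leaves implicit.
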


\begin{proof}
The function $s\mapsto\mu(s)^{-d/2}$, extended by the value $+\infty$ when
$\mu(s)=0$, is convex because $\rho\mapsto\rho^{-d/2}$ is convex and decreasing
while $\mu$ is concave. Its integral functional is lower semicontinuous: from
any $L^2$-convergent sequence, choose a subsequence realizing the lower limit
and then a further subsequence converging almost everywhere, to which Fatou's
lemma applies. This proves that $\mathcal B_{\mu,K}$ is closed and convex.

For $b_1,b_2\in\mathcal B_{\mu,K}$ and $t\in[0,1]$, concavity of $\mu$ gives
\[
   \mu(tb_1+(1-t)b_2)\geq t\mu(b_1)+(1-t)\mu(b_2).
\]
Lemma~\ref{lem:EtildeMonotonicity}, followed by the convexity of $\G$ from
Lemma~\ref{lem:Gconv1}, therefore yields
\[
\begin{aligned}
   \G\bigl(\mu(tb_1+(1-t)b_2)\bigr)
   &\leq\G\bigl(t\mu(b_1)+(1-t)\mu(b_2)\bigr)\\
   &\leq t\G(\mu(b_1))+(1-t)\G(\mu(b_2)).
\end{aligned}
\]
Thus $\mathcal H$ is convex. To prove lower semicontinuity, let
$b_n\to b$ in $L^2(\Omega)$. If
$\liminf_n\mathcal H(b_n)=+\infty$, there is nothing to prove.
Otherwise, after passing to a subsequence realizing the lower limit, we may
assume that $b_n\in\mathcal B_{\mu,K}$ for every $n$. Since
$\mathcal B_{\mu,K}$ is closed, $b\in\mathcal B_{\mu,K}$, while continuity of
the Nemytskii map gives $\mu(b_n)\to\mu(b)$ in $L^2(\Omega)$. The lower
semicontinuity of $\G$ therefore yields
\[
   \mathcal H(b)=\G(\mu(b))
   \leq\liminf_{n\to\infty}\G(\mu(b_n))
   =\liminf_{n\to\infty}\mathcal H(b_n).
\]
\end{proof}

\subsection{Directional derivative and the chain-rule qualification}

Proposition~\ref{prop:H-convex-lsc} supplies the abstract convex structure,
but it does not by itself identify the $L^2$ subgradient. We begin with
directions for which the coefficient-dependent energy spaces remain comparable.
%Throughout this subsection, $D\mathcal H(b_0)[h]$ denotes the feasible one-sided derivative as $t\downarrow0$.

\begin{proposition}[Derivative along comparable directions]
\label{prop:H-directional-comparable}
Let $b_0,b\in\mathcal B_{\mu,K}$ satisfy
\begin{equation}\label{eq:comparable-b-direction}
   0\leq b\leq Lb_0\qquad\text{a.e. in }\Omega,
\end{equation}
for some $L>0$, and let $h=\tau(b-b_0)$ with $\tau>0$. Then the feasible
right directional derivative exists and is given by
\begin{equation}\label{eq:H-directional-comparable}
   D\mathcal H(b_0)[h] := \lim_{t\to 0^+}
   \frac{\mathcal H(b_0+th)-\mathcal H(b_0)}{t}
   =-\int_\Omega
      \mu'(b_0)|\nabla u[\mu(b_0)]|^2h\,\d x.
\end{equation}
\end{proposition}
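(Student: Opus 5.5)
The plan is to reduce the claim to the linear directional derivative of $\G$ along comparable coefficients, already obtained in the proof of Lemma~\ref{lem:subdiff-GK}, and then to linearize $\mu$. Set $b_t:=b_0+th=(1-t\tau)b_0+t\tau b$ for $0<t<1/\tau$. Convexity of $\mathcal B_{\mu,K}$ (Proposition~\ref{prop:H-convex-lsc}) gives $b_t\in\mathcal B_{\mu,K}$, so $\mathcal H(b_t)=\G(\mu(b_t))$. We have $a_0:=\mu(b_0)\in\K_K$ and $a_t:=\mu(b_t)\in\K_K$. First I would show that $a_t$ and $a_0$ are comparable. Since $(1-t\tau)b_0\leq b_t\leq(1-t\tau+t\tau L)b_0$, monotonicity and concavity of $\mu$ with $\mu(0)=0$ give $\mu(\lambda s)\le\lambda\mu(s)$ for $\lambda\ge1$ and $\mu(\lambda s)\ge\lambda\mu(s)$ for $\lambda\le1$. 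Hence $(1-t\tau)a_0\leq a_t\leq(1+t\tau L)a_0$. Consequently $\Ha{a_t}=\Ha{a_0}$ with uniformly equivalent norms for small $t$, and the cross-testing argument of Lemma~\ref{lem:subdiff-GK} applies verbatim, with $u_t:=u[a_t]$ and $u_0:=u[a_0]$:
\[
   \frac{\G(a_t)-\G(a_0)}{t}
   =-\int_\Omega\frac{a_t-a_0}{t}\,\nabla u_t\cdot\nabla u_0\,\d x .
\]

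Next I would pass to the limit, as in Lemma~\ref{lem:subdiff-GK}. The family $(u_t)$ is bounded in $\Ha{a_0}$, and uniqueness of the energy solution yields $u_t\rightharpoonup u_0$ weakly in $\Ha{a_0}$. For the coefficient quotient, write $w_t:=(a_t-a_0)/(t\,a_0)$. The comparability bounds give $|w_t|\le\tau\max\{1,L\}$ a.e. Pointwise on $\{b_0>0\}$, which is of full measure, the $C^1$ regularity of $\mu$ on $(0,\infty)$ gives $w_t\to\mu'(b_0)h/\mu(b_0)=:w$. I would then split
\[
   \int_\Omega w_t a_0\nabla u_t\cdot\nabla u_0\,\d x
   =\int_\Omega w\, a_0\nabla u_t\cdot\nabla u_0\,\d x
   +\int_\Omega (w_t-w)\, a_0\nabla u_t\cdot\nabla u_0\,\d x .
\]
In the first term, $w\sqrt{a_0}\nabla u_0\in L^2$ because $w\in L^\infty$. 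This is the functional $v\mapsto\int w a_0\nabla v\cdot\nabla u_0$, which is continuous on $\Ha{a_0}$, so weak convergence gives the limit $\int w a_0|\nabla u_0|^2$. The second term is bounded by $\|\sqrt{a_0}\nabla u_t\|_{L^2}\,\|(w_t-w)\sqrt{a_0}\nabla u_0\|_{L^2}$. This tends to zero by dominated convergence, with dominant $C|a_0||\nabla u_0|^2\in L^1$. Together these give $-\int\mu'(b_0)h|\nabla u_0|^2$. The integrability of the result is automatic, since $\mu'(b_0)|h|\le C\mu(b_0)$.

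The main obstacle is the pointwise bound $|w_t|\le C$, which replaces any integrability assumption on $\mu'(b_0)$. Near $b_0=0$, $\mu'$ may blow up, and the bound comes only from the one-sided scaling inequalities of a concave $\mu$ with $\mu(0)=0$. I would check these carefully, including on the set where $b<b_0$, where $h<0$ and the lower bound $(1-t\tau)$ is the one used. A secondary point is the limit of the difference quotient for the full family $t\to0^+$ rather than along sequences. I would handle it by the subsequence-uniqueness argument of Lemma~\ref{lem:SubDiffFr}: every sequence $t_n\to0^+$ has a subsequence with the same limit.
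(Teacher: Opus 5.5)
Your proposal is correct and follows essentially the paper's argument: the cross-testing identity for comparable coefficients, weak convergence $u_t\rightharpoonup u[\mu(b_0)]$ in $\Ha{\mu(b_0)}$, splitting off the linearized term $\mu'(b_0)h$, and dominated convergence using a uniform bound on the coefficient quotient relative to $\mu(b_0)$. The only difference is how you get that bound: you derive $|w_t|\le\tau\max\{1,L\}$ directly from the one-sided scaling inequalities for a concave $\mu$ with $\mu(0)=0$, which is somewhat cleaner than the paper's mean-value-theorem argument with its case split on $\{b_0\le b\}$ and $\{b_0>b\}$.
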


\begin{proof}
For $0\leq\tau t\leq1$,
\[
   b_0+th=(1-\tau t)b_0+\tau tb\in\mathcal B_{\mu,K}.
\]
Moreover, monotonicity and concavity of $\mu$, together with $\mu(0)=0$ and
\eqref{eq:comparable-b-direction}, give constants $c,C>0$, independent of
sufficiently small $t>0$, such that
\[
   c\,\mu(b_0)\leq\mu(b_0+th)\leq C\,\mu(b_0)
   \qquad\text{a.e. in }\Omega.
\]
Thus the corresponding weighted energy spaces are comparable, and all
cross-tests used below are admissible. For an arbitrary sequence
$t_n\to 0^+$, consider the directional difference quotient
\[
   \frac{\mathcal H(b_0+t_n h)-\mathcal H(b_0)}{t_n}
   =
   \int_\Omega
      \frac{\mu_n |\nabla u_n|^2-\mu_0|\nabla u_0|^2}{t_n}
   \,\d x .
\]
We denote
\[
   u_0:=u[\mu(b_0)],\qquad
   u_n:=u[\mu(b_0+t_nh)],\qquad
   \mu_0:=\mu(b_0),\qquad
   \mu_n:=\mu(b_0+t_nh),
\]
for $t_n\to 0^+$.
We subtract the elliptic equations
\begin{equation}   \label{eq:sec:concave:ell0}
   -\operatorname{div}(\mu_0\nabla u_0)=f,\qquad u_0|_{\partial\Omega}=0,
\end{equation}
and
\[
   -\operatorname{div}(\mu_n\nabla u_n)=f,\qquad u_n|_{\partial\Omega}=0,
\]
test the difference against $u_0+u_n$, and obtain, after a short computation,
\begin{equation}
   \frac{\mathcal H(b_0+t_n h)-\mathcal H(b_0)}{t_n}
   =
   -\int_\Omega
      \frac{\mu_n-\mu_0}{t_n}
      \nabla u_n\cdot\nabla u_0
   \,\d x .
   \label{eq:direct-quotient}
\end{equation}
Writing $h=\tau(b-b_0)$ with $b\in \mathcal B_{\mu,K}$, we have
\[
   \mu_n
   =
   \mu(b_0+t_nh)
   =
   \mu\bigl( b_0+\tau t_n(b-b_0)\bigr)
   =
   \mu\bigl( (1-\tau t_n)b_0+\tau t_n b\bigr).
\]
By concavity of $\mu$,
\[
   \mu_n
   \geq
   (1-\tau t_n)\mu(b_0)+\tau t_n\mu(b)
   \geq
   \frac12 \mu(b_0)
\]
provided $t_n\leq (2\tau)^{-1}$. From now on we assume this without further
comment.

%\paragraph{Stability of the elliptic solutions.}
Lemma~\ref{lem:Riesz}, applied with $a=\mu_n$, gives
\[
   \sup_n\int_\Omega\mu_n|\nabla u_n|^2\,\d x<+\infty,
\]
and consequently $(u_n)_{n\in\N}$ is bounded in $\Ha{\mu_0}$.
We now consider the weak formulation of
\[
   -\operatorname{div}(\mu_n\nabla u_n)=f,\qquad u_n|_{\partial\Omega}=0.
\]
For every test function $\varphi\in C_0^\infty(\Omega)$,
\[
   \int_\Omega \mu_n \nabla u_n\cdot\nabla\varphi\,\d x
   =
   \int_\Omega f\varphi\,\d x.
\]
Equivalently,
\[
   \int_\Omega \mu_0 \nabla u_n\cdot\nabla\varphi\,\d x
   =
   \int_\Omega f\varphi\,\d x
   -
   \int_\Omega (\mu_n-\mu_0)\nabla u_n\cdot\nabla\varphi\,\d x .
\]
We prove that the last integral tends to zero. Indeed, using H\"older's inequality,
\[
\begin{aligned}
   \left|
   \int_\Omega (\mu_n-\mu_0)\nabla u_n\cdot\nabla\varphi\,\d x
   \right|
   &=
   \left|
   \int_\Omega
      \left(\sqrt{\mu_n}-\frac{\mu_0}{\sqrt{\mu_n}}\right)
      \sqrt{\mu_n}\,\nabla u_n\cdot\nabla\varphi
   \,\d x
   \right|                                                        \\
   &\leq
   \left(\int_\Omega \mu_n|\nabla u_n|^2\,\d x\right)^{1/2}
   \left(
   \int_\Omega
      \left(\sqrt{\mu_n}-\frac{\mu_0}{\sqrt{\mu_n}}\right)^2
      |\nabla\varphi|^2
   \,\d x
   \right)^{1/2} .
\end{aligned}
\]
Since $\mu_n\to\mu_0$ pointwise a.e. in $\Omega$ and, for all sufficiently
large $n$,
\[
   0\leq \frac{\mu_0^2}{\mu_n}\leq 2\mu_0,
   \qquad
   0\leq\mu_n\leq\max\{\mu(b_0),\mu(b)\},
\]
where the right-hand sides are integrable, dominated convergence gives
\[
   \int_\Omega \left(\sqrt{\mu_n}-\frac{\mu_0}{\sqrt{\mu_n}}\right)^2 \,\d x \to 0.
\]
Passing to the limit in the weak formulation shows that every weak cluster
point $\bar u\in\Ha{\mu_0}$ satisfies
\[
   \int_\Omega\mu_0\nabla\bar u\cdot\nabla\varphi\,\d x
   =\int_\Omega f\varphi\,\d x
   \qquad \mbox{for all } \varphi\in C_0^\infty(\Omega).
\]
By density of $C_0^\infty(\Omega)$ in $\Ha{\mu_0}$, this identity extends to
every $\varphi\in\Ha{\mu_0}$. Uniqueness in Lemma~\ref{lem:Riesz} then gives
$\bar u=u_0$. Consequently, the whole sequence satisfies
\[
   u_n \rightharpoonup u_0
   \qquad \text{weakly in } \Ha{\mu_0}.
\]

%\paragraph{Passage to the limit in the difference quotient.}
We now write \eqref{eq:direct-quotient} as
\begin{equation}
\begin{aligned}
   \frac{\mathcal H(b_0+t_n h)-\mathcal H(b_0)}{t_n}
   &=
   -\int_\Omega
      \left(
      \frac{\mu_n-\mu_0}{t_n}-\mu'_0h
      \right)
      \nabla u_n\cdot\nabla u_0
   \,\d x                                                     \\
   &\quad
   -\int_\Omega
      \mu'_0h\,\nabla u_n\cdot\nabla u_0
   \,\d x ,
\end{aligned}
\label{eq:quotient-split}
\end{equation}
where $\mu'_0:=\mu'(b_0)$.
Note that $b_0\in\mathcal B_{\mu,K}$ implies $\mu_0\in\K_K$, so the set where
$\mu_0$ vanishes has Lebesgue measure zero. Since $\mu(0)=0$ and
$\mu(s)>0$ for $s>0$, we also have $b_0>0$ almost everywhere; hence
$\mu'(b_0)$ is finite almost everywhere.
Using the weak convergence just established, the second term in
\eqref{eq:quotient-split} satisfies
\[
   -\int_\Omega \mu'_0h\,\nabla u_n\cdot\nabla u_0\,\d x
   \,\to\,
   -\int_\Omega \mu'_0|\nabla u_0|^2 h\,\d x,
\]
provided
\[
   \frac{\mu'_0}{\mu_0}h\in L^\infty(\Omega),
\]
which guarantees
\[
   \frac{\mu'_0}{\mu_0}h\,\nabla u_0
   \in L^2(\Omega,\mu_0\d x;\mathbb R^d).
\]
Since $h=\tau(b-b_0)$, we estimate
\[
   \frac{|\mu'_0|}{\mu_0}|h|
   \leq
   \tau \frac{|\mu'_0|}{\mu_0}b
   +
   \tau \frac{|\mu'_0|}{\mu_0}b_0 .
\]
The supporting-hyperplane inequality for the concave function $\mu$ gives,
for all $b_1\geq0$,
\[
   \mu(b_1)
   \leq
   \mu(b_0)+\mu'(b_0)(b_1-b_0).
\]
Setting $b_1=0$, we find
\[
   \frac{|\mu'_0|b_0}{\mu_0}\leq 1 .
\]
Therefore, condition \eqref{eq:comparable-b-direction} guarantees the required
essential bound.

It remains to prove that the first integral on the right-hand side of
\eqref{eq:quotient-split} tends to zero. We denote
\[
   \xi_n
   :=
   \frac{\mu_n-\mu_0}{t_n}-\mu'_0h.
\]
By the mean value theorem, pointwise almost everywhere there is a value
$\eta_n(x)$ between $b_0(x)$ and $b_0(x)+t_nh(x)$ such that
\[
   \xi_n=\bigl(\mu'(\eta_n)-\mu'(b_0)\bigr)h.
\]
Hence $\xi_n\to0$ pointwise almost everywhere in $\Omega$.
Moreover,
\[
\begin{aligned}
   \left|
   \int_\Omega \xi_n\nabla u_n\cdot\nabla u_0\,\d x
   \right|
   &\leq
   \left(\int_\Omega \mu_0|\nabla u_n|^2\,\d x\right)^{1/2}
   \left(
   \int_\Omega
      \left|\frac{\xi_n}{\mu_0}\right|^2
      \mu_0|\nabla u_0|^2
   \,\d x
   \right)^{1/2}.
\end{aligned}
\]
We shall now derive conditions under which
\[
   \frac{\xi_n}{\mu_0}\in L^\infty(\Omega)
   \quad\text{uniformly in } n\in\N.
\]

First, let $x\in\Omega$ be such that $b_0(x)\leq b(x)$.
Then $h(x)\geq0$ and $\eta_n(x)\geq b_0(x)$. Using the monotonicity of
$\mu'$ on $(0,+\infty)$, we estimate
\[
   \frac{|\xi_n(x)|}{\mu_0(x)}
   \leq
   C\tau\,\frac{b(x)}{b_0(x)}\,\frac{|\mu'_0(x)|b_0(x)}{\mu_0(x)} .
\]

Second, let $x\in\Omega$ be such that $b_0(x)>b(x)$.
Then $h(x)<0$ and $\eta_n(x)\leq b_0(x)$. Moreover,
\[
   \eta_n(x)
   \geq
   b_0(x)+t_nh(x)
   =
   (1-\tau t_n)b_0(x)+\tau t_n b(x)
   \geq
   (1-\varepsilon)b_0(x)
\]
whenever $0<\varepsilon<1$ is fixed and $t_n\leq \varepsilon/\tau$.
Using again the supporting-hyperplane inequality for the concave function
$\mu$, one obtains a
pointwise bound of the form
\[
   \frac{|\xi_n(x)|}{\mu_0(x)}
   \leq
   \frac{C}{1-\varepsilon}.
\]
Consequently, by dominated convergence,
\[
   \int_\Omega
      \left|\frac{\xi_n}{\mu_0}\right|^2
      \mu_0|\nabla u_0|^2 \,\d x \,\to\, 0.
\]
We conclude that
\[
   \int_\Omega \xi_n\nabla u_n\cdot\nabla u_0\,\d x \,\to\, 0.
\]
Combining this with \eqref{eq:quotient-split}, we obtain
\[
   D\mathcal H(b_0)[h]
   =
   -\int_\Omega \mu'(b_0)|\nabla u[\mu(b_0)]|^2 h\,\d x
\]
for the comparable direction $h$. Since the sequence $t_n\to 0^+$ was
arbitrary, this proves \eqref{eq:H-directional-comparable}.
\end{proof}

Lemma \ref{lem:BmuK-dominated-density} shows that the linear functional on the right hand side of
\eqref{eq:H-directional-comparable} extends continuously from
\[
   \bigcup_{\tau>0}
   \left\{\tau(b-b_0);\, b\in A_\mu(b_0)\right\}
\]
to $T_{\mathcal B_{\mu,K}}(b_0)$ if and only if
$q_0:=\mu'(b_0)|\nabla u[\mu(b_0)]|^2\in L^2(\Omega)$.

\subsection{Subdifferentials and gradient-flow identification}

We now characterize the subdifferential of $\mathcal H$,
%without additional growth assumptions on the concave mobility $\mu$.
%The main point is to establish a global supporting inequality, which does not follow directly
%from the directional derivative formula \eqref{eq:H-directional-comparable}.
applying the radial approximation result of Lemma~\ref{lem:radial-energy-approximation}
to obtain the necessary approximation of the weighted energy solution.

\begin{proposition}[Subdifferential of $\mathcal H$]
\label{prop:H-subdifferential}
Let $d\geq2$, and let
$\mu\in C([0,\infty))\cap C^1((0,\infty))$
be strictly increasing and concave, with $\mu(0)=0$.
For $b_0\in\mathcal B_{\mu,K}$, set
\[
   u_0:=u[\mu(b_0)],
   \qquad
   q_0:=\mu'(b_0)|\nabla u_0|^2.
\]
Then
\begin{equation}\label{eq:H-subdifferential}
   \partial\mathcal H(b_0)=
   \begin{cases}
   -q_0+N_{\mathcal B_{\mu,K}}(b_0),
      &q_0\in L^2(\Omega),\\
   \emptyset,&q_0\notin L^2(\Omega).
   \end{cases}
\end{equation}
%In particular, no additional integrability assumption on $\nabla u_0$ or on the normal to the inverse-moment constraint is required.
\end{proposition}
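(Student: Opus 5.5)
The proof has two halves, mirroring Lemma~\ref{lem:subdiff-GK}: a necessity direction obtained from the directional derivative along comparable directions, and a sufficiency direction obtained from a global supporting inequality.

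\emph{Necessity.} Let $\xi\in\partial\mathcal H(b_0)$. For every $b\in A_\mu(b_0)$ and $\tau>0$ the direction $h=\tau(b-b_0)$ is feasible. The subgradient inequality applied to $b_0+th$, divided by $t$ and combined with Proposition~\ref{prop:H-directional-comparable}, gives
\[
   \int_\Omega(\xi+q_0)(b-b_0)\,\d x\leq0
   \qquad\text{for all } b\in A_\mu(b_0).
\]
Here $q_0(b-b_0)\in L^1$, because $\mu'(b_0)b_0\leq\mu(b_0)$ and $b\leq Lb_0$. Next I show that $q_0\notin L^2$ forces the subdifferential to be empty. Take $b=\min\{b_0+k\mathbf 1_{E},\,nb_0\}$ on sets $E$ where $q_0$ is large, rescaled slightly as in Lemma~\ref{lem:BmuK-dominated-density} to stay in $\mathcal B_{\mu,K}$; increasing $b$ only improves the inverse moment, so upward perturbations are admissible. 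Since $\xi\in L^2$, the inequality above bounds $\int q_0 w\,\d x$ by $\|\xi\|_{L^2}\|w\|_{L^2}$ for all nonnegative $w$ of this form. By duality this yields $q_0\in L^2$ on such sets, and letting the sets exhaust $\Omega$ gives a contradiction. Hence $q_0\in L^2$ whenever $\partial\mathcal H(b_0)\neq\emptyset$. Once $q_0\in L^2$, the density in Lemma~\ref{lem:BmuK-dominated-density} extends the inequality to all $b\in\mathcal B_{\mu,K}$, so $\xi+q_0\in N_{\mathcal B_{\mu,K}}(b_0)$.

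\emph{Sufficiency.} Assume $q_0\in L^2$. It suffices to show that $-q_0$ supports $\mathcal H$ on $\mathcal B_{\mu,K}$, i.e.\ $\mathcal H(b)\geq\mathcal H(b_0)-\int q_0(b-b_0)\,\d x$; adding normal-cone elements then gives subgradients of $\mathcal H$, which equals $\mathcal H+\iota_{\mathcal B_{\mu,K}}$. Concavity gives $\mu(b)\leq\mu(b_0)+\mu'(b_0)(b-b_0)$. Inserting the test function $u_0$ into the common-core formula \eqref{def:GLin} would give
\[
   \G(\mu(b))\geq 2\int_\Omega fu_0\,\d x-\int_\Omega\mu(b)|\nabla u_0|^2\,\d x
   \geq\G(\mu(b_0))-\int_\Omega q_0(b-b_0)\,\d x ,
\]
using $\int\mu_0|\nabla u_0|^2\,\d x=\int fu_0\,\d x=\G(\mu_0)$.

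\emph{Main obstacle.} The difficulty is justifying $u_0$ as a test function in \eqref{def:GLin} for the coefficient $\mu(b)$. We do not know $\nabla u_0\in L^4$, only $\mu'(b_0)|\nabla u_0|^2\in L^2$, and the supremum runs over $C_0^\infty$. My plan is to evaluate the affine bound along approximants $\varphi_n\to u_0$ and show $\int\mu(b)|\nabla\varphi_n|^2\,\d x\to\int\mu(b)|\nabla u_0|^2\,\d x$. The right-hand side is finite, since it is at most $\int\mu_0|\nabla u_0|^2\,\d x+\int q_0(b-b_0)\,\d x<\infty$. The paper points to Lemma~\ref{lem:radial-energy-approximation}, but that lemma handles spatially homogeneous integrands, so I would first reduce to that setting. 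Truncate $u_0$ at levels $\pm m$ and apply the lemma with a radial integrand to a suitably bounded version. Alternatively, approximate $b$ from below by bounded $b_k\leq nb_0$: the inequality is then needed only for comparable coefficients, for which $\Ha{\mu(b_k)}$ embeds in $\Ha{\mu_0}$ and the density of $C_0^\infty$ in the latter space suffices. Afterwards pass $k\to\infty$ using lower semicontinuity from Proposition~\ref{prop:H-convex-lsc} together with $q_0\in L^2$ for the linear term. I would try this comparable-approximation route first, since it avoids weighted density issues entirely.
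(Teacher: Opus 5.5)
\textbf{There is a genuine gap, and it sits in the sufficiency half, which is the core of the proposition.}

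Your necessity half is fine and matches the paper's Steps~1--2. The paper argues more directly: it tests with the upward comparable direction $h=b_0\mathbf 1_D$. This direction is admissible by upward closedness, so no rescaling is needed. It yields $\xi\le -q_0\le 0$ pointwise, hence $q_0\le|\xi|\in L^2$.

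\textbf{Your preferred comparable-approximation route fails at the limit step.}
\begin{itemize}
\item For $b_k\le nb_0$, the supporting inequality $\mathcal H(b_k)\ge\mathcal H(b_0)-\int_\Omega q_0(b_k-b_0)\,\d x$ does hold. It already follows from Proposition~\ref{prop:H-directional-comparable} and convexity. Note also that $\mu(b_k)\le n\mu(b_0)$ gives $\Ha{\mu(b_0)}\hookrightarrow\Ha{\mu(b_k)}$, not the reverse embedding you wrote.
\item Lower semicontinuity only gives $\mathcal H(b)\le\liminf_k\mathcal H(b_k)$. That is the wrong direction: to transfer a lower bound from $\mathcal H(b_k)$ to $\mathcal H(b)$, you need $\mathcal H(b)\ge\limsup_k\mathcal H(b_k)$.
\item For $b_k=\min\{b,nb_0\}\le b$, Lemma~\ref{lem:EtildeMonotonicity} gives $\mathcal H(b_k)\ge\mathcal H(b)$. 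So you would need $\mathcal H(b_k)\to\mathcal H(b)$, a monotone-convergence property of the smooth-core pumping energy under increasing unbounded weights.
\item That property is exactly the weighted-density ($H$ versus $W$) issue you hoped to avoid. Nothing in the paper supplies it, and the paper explicitly warns that the pumping energy can jump along strongly convergent coefficients.
\end{itemize}
Put abstractly: knowing $D\mathcal H(b_0)[h]=-\int_\Omega q_0h\,\d x$ on a dense cone of comparable directions yields only necessary conditions on subgradients. It gives no lower bound on $D\mathcal H(b_0)[h]$ for non-comparable $h$, because the directional derivative of a convex function need not be lower semicontinuous.

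\textbf{Your alternative route omits the missing idea.} You propose to truncate $u_0$ in value and apply Lemma~\ref{lem:radial-energy-approximation} with ``a radial integrand''. Value truncation does not control gradients, and choosing the integrand is the whole difficulty. The obvious choice $|\zeta|^4$ would require $\nabla u_0\in L^4$, which you do not have. The paper takes
\[
   \Phi(\zeta):=\sup_{t\geq0}\bigl\{\mu(t)|\zeta|^2-t^2\bigr\},
\]
which is convex, radial, nondecreasing in $|\zeta|$, and satisfies $\Phi(0)=0$. The argument then runs as follows.
\begin{itemize}
\item The tangent-line bound $\mu(t)\le\mu(b_0)+\mu'(b_0)(t-b_0)$ gives $\Phi(\nabla u_0)\le\mu(b_0)|\nabla u_0|^2+\tfrac14 q_0^2\in L^1$. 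This is exactly where $q_0\in L^2$ enters, an assumption weaker than $\nabla u_0\in L^4$.
\item Choosing $t=1$ in the supremum gives $\nabla u_0\in L^2$, hence $u_0\in H^1_0(\Omega)$.
\item The lemma then supplies $\varphi_n\in C_0^\infty(\Omega)$ with $\Phi(\nabla\varphi_n)\to\Phi(\nabla u_0)$ in $L^1$ and $\nabla\varphi_n\to\nabla u_0$ a.e.
\item The pointwise bound $\mu(b)|\nabla\varphi_n|^2\le b^2+\Phi(\nabla\varphi_n)$ makes $\mu(b)|\nabla(\varphi_n-u_0)|^2$ uniformly integrable. Vitali's theorem gives $\int_\Omega\mu(b)|\nabla(\varphi_n-u_0)|^2\,\d x\to0$ simultaneously for every competitor $b\in\mathcal B_{\mu,K}$.
\end{itemize}
This is what legitimizes $u_0$ as a test function in \eqref{def:GLin} for the coefficient $\mu(b)$. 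Without it, or an equivalent device, your proof of the reverse inclusion is incomplete.
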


\begin{proof}
We first establish the necessity of $q_0\in L^2(\Omega)$,
then prove the two subdifferential inclusions.

\smallskip
\noindent
\emph{Step 1. Necessity of $q_0\in L^2(\Omega)$.}
Suppose that $\xi\in\partial\mathcal H(b_0)$.
Since $b_0\in\mathcal B_{\mu,K}$,
we have $b_0>0$ almost everywhere.
Concavity of $\mu$ and $\mu(0)=0$ imply
\[
   0\leq b_0\mu'(b_0)\leq\mu(b_0).
\]
Consequently,
\[
   0\leq b_0q_0
   \leq\mu(b_0)|\nabla u_0|^2\in L^1(\Omega).
\]

For an arbitrary measurable set $D\subset\Omega$,
consider the upward comparable direction
$h=b_0\mathbf 1_D$.
Since $\mathcal B_{\mu,K}$ is upward closed,
$b_0+th\in\mathcal B_{\mu,K}$ for $t\geq0$,
and the corresponding mobilities are comparable.
The subgradient inequality and
Proposition~\ref{prop:H-directional-comparable}
therefore give
\[
   \int_D\xi b_0\,\d x
   \leq
   -\int_Dq_0b_0\,\d x.
\]
Since $D$ is arbitrary and $b_0>0$ almost everywhere,
we conclude that
\[
   \xi\leq-q_0\leq0
   \qquad\text{a.e. in }\Omega.
\]
Thus $0\leq q_0\leq|\xi|$, and hence
$q_0\in L^2(\Omega)$.

\smallskip
\noindent
\emph{Step 2. The inclusion
$\partial\mathcal H(b_0)
\subseteq-q_0+N_{\mathcal B_{\mu,K}}(b_0)$.}
Let $\xi\in\partial\mathcal H(b_0)$.
By Step 1, $q_0\in L^2(\Omega)$.
For every comparable feasible direction
$h=\tau(b-b_0)$, with
$b\in A_\mu(b_0)$ and $\tau>0$,
the subgradient inequality and
\eqref{eq:H-directional-comparable} yield
\[
   \int_\Omega(\xi+q_0)h\,\d x\leq0.
\]
By Lemma~\ref{lem:BmuK-dominated-density},
the comparable feasible directions are dense
in the tangent cone
$T_{\mathcal B_{\mu,K}}(b_0)$.
Since $\xi+q_0\in L^2(\Omega)$,
the inequality extends to every
$h\in T_{\mathcal B_{\mu,K}}(b_0)$.
Therefore
\[
   \xi+q_0\in N_{\mathcal B_{\mu,K}}(b_0),
\]
which proves the first inclusion.

\smallskip
\noindent
\emph{Step 3. The global supporting inequality.}
Assume now that $q_0\in L^2(\Omega)$.
We prove that
\begin{equation}\label{eq:H-global-support}
   \G(\mu(b))
   \geq\G(\mu(b_0))
   -\int_\Omega q_0(b-b_0)\,\d x
   \qquad\text{for every }b\in\mathcal B_{\mu,K}.
\end{equation}

Write $a_0:=\mu(b_0)$ and
$s_0:=|\nabla u_0|^2$.
By concavity of $\mu$,
\[
   \mu(b)\leq a_0+\mu'(b_0)(b-b_0).
\]
Multiplying by $s_0$ gives
\begin{equation}\label{eq:H-competitor-integrability}
   0\leq\mu(b)s_0
   \leq a_0s_0+q_0(b-b_0)
   \leq a_0s_0+q_0b.
\end{equation}
The right-hand side is integrable, since
$a_0s_0\in L^1(\Omega)$ and
$q_0,b\in L^2(\Omega)$.

In order to use $u_0$ as a test function in
the common-core representation of
$\G(\mu(b))$, we need a smooth approximation
that also converges in the weighted energy
corresponding to $\mu(b)$.

To construct it, introduce the auxiliary integrand
\begin{equation}\label{eq:H-auxiliary-integrand}
   \Phi(\zeta):=
   \sup_{t\geq0}\left\{
      \mu(t)|\zeta|^2-t^2
   \right\},
   \qquad \zeta\in\R^d.
\end{equation}
This is a convex, lower-semicontinuous, radial
function, nondecreasing in $|\zeta|$, with
$\Phi(0)=0$.

Since $\mu$ is increasing, concave and
vanishes at zero, it has at most linear growth:
\[
   0\leq\mu(t)\leq C_0+C_1t,
   \qquad t\geq0,
\]
for suitable $C_0,C_1\geq0$.
Consequently,
\[
   0\leq\Phi(\zeta)
   \leq C_0|\zeta|^2+\frac{C_1^2}{4}|\zeta|^4.
\]
In particular, $\Phi$ is finite everywhere.

By concavity of $\mu$, for every $t\geq0$,
\[
   \mu(t)s_0-t^2
   \leq a_0s_0+q_0(t-b_0)-t^2.
\]
Taking the supremum over $t\geq0$, we obtain
\begin{align*}
   0\leq\Phi(\nabla u_0)
   &\leq a_0s_0-q_0b_0
       +\sup_{t\geq0}(q_0t-t^2)\\
   &\leq a_0s_0+\frac14q_0^2.
\end{align*}
Thus
\begin{equation}\label{eq:H-auxiliary-integrability}
   \Phi(\nabla u_0)\in L^1(\Omega).
\end{equation}

Moreover, testing the supremum
\eqref{eq:H-auxiliary-integrand} at $t=1$ gives
\[
   \mu(1)|\nabla u_0|^2
   \leq\Phi(\nabla u_0)+1.
\]
Hence $\nabla u_0\in L^2(\Omega)$.
Since the weighted energy space embeds into
$W_0^{1,2d/(d+2)}(\Omega)\cap L^2(\Omega)$,
the zero-trace characterization on a Lipschitz
domain implies $u_0\in H_0^1(\Omega)$,
including the endpoint case $d=2$.

We may now apply
Lemma~\ref{lem:radial-energy-approximation}.
There exists a sequence
$\varphi_n\in C_0^\infty(\Omega)$ such that
\[
   \varphi_n\to u_0\quad\text{in }H_0^1(\Omega),
   \qquad
   \Phi(\nabla\varphi_n)\to\Phi(\nabla u_0)
   \quad\text{in }L^1(\Omega),
\]
and, after passing to a subsequence,
$\nabla\varphi_n\to\nabla u_0$ almost everywhere.

For any fixed $b\in\mathcal B_{\mu,K}$,
the definition of $\Phi$ implies
\[
   0\leq\mu(b)|\nabla\varphi_n|^2
   \leq b^2+\Phi(\nabla\varphi_n).
\]
Since $b^2\in L^1(\Omega)$ and
$\Phi(\nabla\varphi_n)$ converges in $L^1$,
the right-hand sides are uniformly integrable.
The same bound holds for $u_0$.
It follows that
$\mu(b)|\nabla(\varphi_n-u_0)|^2$
is uniformly integrable and converges
to zero almost everywhere.
Vitali's theorem therefore yields
\begin{equation}\label{eq:H-weighted-approximation}
   \int_\Omega
      \mu(b)|\nabla(\varphi_n-u_0)|^2\,\d x
   \longrightarrow0.
\end{equation}

Thus $\varphi_n$ approximates $u_0$
in the weighted energy of every fixed
competitor $b$. In particular,
we may pass to the limit in the
common-core representation to obtain
\begin{align*}
   \G(\mu(b))
   &\geq
   \lim_{n\to\infty}
   \left(
      2\int_\Omega f\varphi_n\,\d x
      -\int_\Omega\mu(b)|\nabla\varphi_n|^2\,\d x
   \right)\\
   &=2\int_\Omega fu_0\,\d x
     -\int_\Omega\mu(b)s_0\,\d x\\
   &=\G(a_0)
     -\int_\Omega(\mu(b)-a_0)s_0\,\d x\\
   &\geq\G(a_0)
     -\int_\Omega q_0(b-b_0)\,\d x.
\end{align*}
Here we used the weighted energy identity
for $u_0$ and the concavity of $\mu$.
This proves \eqref{eq:H-global-support}.

\smallskip
\noindent
\emph{Step 4. The reverse inclusion.}
The global supporting inequality
\eqref{eq:H-global-support} implies
$-q_0\in\partial\mathcal H(b_0)$.
Moreover, for every
$\eta\in N_{\mathcal B_{\mu,K}}(b_0)$
and $b\in\mathcal B_{\mu,K}$,
\[
   \int_\Omega\eta(b-b_0)\,\d x\leq0.
\]
Adding this inequality to
\eqref{eq:H-global-support} yields
\[
   \mathcal H(b)\geq\mathcal H(b_0)
   +\int_\Omega(-q_0+\eta)(b-b_0)\,\d x.
\]
Hence
\[
   -q_0+N_{\mathcal B_{\mu,K}}(b_0)
   \subseteq\partial\mathcal H(b_0).
\]
Together with Step 2, this proves
\eqref{eq:H-subdifferential}.
\end{proof}

We next identify the normal cone appearing in
\eqref{eq:H-subdifferential}. This requires an
additional integrability assumption only when
the inverse-moment constraint is active.
For $b_0\in\mathcal B_{\mu,K}$, set
\[
   z_0:=
   \frac{\mu'(b_0)}{\mu(b_0)^{d/2+1}}.
\]
If the constraint is inactive, that is,
\[
   \int_\Omega\mu(b_0)^{-d/2}\,\d x<K,
\]
then
\[
   N_{\mathcal B_{\mu,K}}(b_0)=\{0\}.
\]
If the constraint is active and
$z_0\in L^2(\Omega)$, then
\[
   N_{\mathcal B_{\mu,K}}(b_0)
   =\{-\Lambda z_0:\Lambda\geq0\}.
\]
The factor $d/2$ arising from differentiation
of the inverse-moment constraint is absorbed
into $\Lambda$.

Consequently, whenever $q_0\in L^2(\Omega)$,
the subdifferential can be written as
\begin{equation}\label{eq:H-subdifferential-multiplier}
   \partial\mathcal H(b_0)
   =
   \left\{
      -q_0-\Lambda z_0:
      \Lambda\geq0,\quad
      \Lambda\left(
         \int_\Omega\mu(b_0)^{-d/2}\,\d x-K
      \right)=0
   \right\},
\end{equation}
provided $z_0\in L^2(\Omega)$ in the active case.
In the inactive case,
$\partial\mathcal H(b_0)=\{-q_0\}$
without any assumption on $z_0$.

\medskip
We now turn to the gradient flow.
The characterization
\eqref{eq:H-subdifferential} provides
an unconditional identification of the
pure pumping flow.

\begin{corollary}%[Pure pumping flow]
\label{cor:H-pure-flow}
Assume that $\mathcal B_{\mu,K}\neq\emptyset$
and that $\mu$ satisfies the hypotheses of
Proposition~\ref{prop:H-subdifferential}.
Then every $b_{\mathrm{in}}\in\mathcal B_{\mu,K}$
generates a unique $L^2(\Omega)$-subgradient
flow of $\mathcal H$. The flow is strong on
every finite time interval and satisfies
\begin{equation}\label{eq:H-pure-flow}
   \partial_t b
   =\mu'(b)|\nabla u[\mu(b)]|^2
   \qquad\text{a.e. in }\Omega\times(0,\infty).
\end{equation}
Moreover, for every $t\geq0$,
\begin{equation}\label{eq:H-pure-flow-energy}
   \mathcal H(b(t))
   +\int_0^t\int_\Omega
      \mu'(b(s))^2|\nabla u[\mu(b(s))]|^4
      \,\d x\,\d s
   =\mathcal H(b_{\mathrm{in}}).
\end{equation}
In particular, $b(t)$ is nondecreasing
in time almost everywhere in $\Omega$, and
\[
   t\longmapsto
   \int_\Omega\mu(b(t))^{-d/2}\,\d x
\]
is nonincreasing.
\end{corollary}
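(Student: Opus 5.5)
The plan follows the pattern of Theorem~\ref{thm:GFweak-existence} and Corollary~\ref{cor:GFweak-pumping}, with the subdifferential of Proposition~\ref{prop:H-subdifferential} in place of Lemma~\ref{lem:subdiff-GK}.

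\emph{Generation of the flow.} By Proposition~\ref{prop:H-convex-lsc}, $\mathcal H$ is convex and lower semicontinuous with $\operatorname{dom}\mathcal H=\mathcal B_{\mu,K}$. It is proper, since by Lemma~\ref{lem:Riesz} $\mathcal H(b)=\G(\mu(b))<+\infty$ for every $b\in\mathcal B_{\mu,K}$, and this set is nonempty by assumption. Because $\operatorname{dom}\mathcal H=\mathcal B_{\mu,K}$ is closed, we get $\overline{\operatorname{dom}\partial\mathcal H}=\mathcal B_{\mu,K}$. Rockafellar's theorem and the Br\'ezis--Komura theorem then give a unique semigroup solution for every $b_{\mathrm{in}}\in\mathcal B_{\mu,K}$. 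Since every such datum has finite energy, the regularity theory for subgradient evolutions yields $b\in H^1(0,T;L^2(\Omega))$, $b(t)\in\operatorname{dom}\partial\mathcal H$ for a.e.\ $t$, and $\partial_tb=-\partial^\circ\mathcal H(b)$. The convex chain rule gives the energy identity $\mathcal H(b(t))+\int_0^t\|\partial_sb\|_{L^2}^2\,\d s=\mathcal H(b_{\mathrm{in}})$.

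\emph{Identification of the minimal section.} Since $\partial\mathcal H(b(t))\neq\emptyset$ for a.e.\ $t$, Proposition~\ref{prop:H-subdifferential} gives $q_0=\mu'(b)|\nabla u[\mu(b)]|^2\in L^2(\Omega)$ and $\partial\mathcal H(b)=-q_0+N_{\mathcal B_{\mu,K}}(b)$; no extra regularity assumption is needed here. The key step is to show that $\partial^\circ\mathcal H(b)=-q_0$, without using the multiplier representation that would require $z_0\in L^2$.

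First, $\mathcal B_{\mu,K}$ is upward closed in $L^2_+$. If $b'\geq b$ with $b'\in L^2$, then $\mu(b')\geq\mu(b)$ by monotonicity of $\mu$, so $\mu(b')^{-d/2}\le\mu(b)^{-d/2}$. Also $\mu(b')\in L^2$ by the linear growth of $\mu$.

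Testing $\eta\in N_{\mathcal B_{\mu,K}}(b)$ with $b+\mathbf 1_D c$ for $c\geq0$ bounded then gives $\eta\le0$ a.e. Since $q_0\ge0$ (as $\mu'\ge0$), every subgradient $\xi=-q_0+\eta$ satisfies $\xi\le -q_0\le0$. Hence $|\xi|\ge q_0$ pointwise, and $\|\xi\|_{L^2}\ge\|q_0\|_{L^2}$ with equality iff $\eta=0$. Therefore $\partial^\circ\mathcal H(b)=-q_0$, which gives \eqref{eq:H-pure-flow}.

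\emph{Consequences.} Substituting \eqref{eq:H-pure-flow} into the energy identity gives \eqref{eq:H-pure-flow-energy}. Because $\partial_tb\ge0$ a.e.\ and $b\in H^1(0,T;L^2)$, for a.e.\ $x$ the map $t\mapsto b(x,t)$ is absolutely continuous and nondecreasing. Monotonicity of $\mu$ then makes $\mu(b(t))^{-d/2}$ pointwise nonincreasing, and integrating gives the last claim.

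The only delicate point is the minimal-section argument: one must show $N_{\mathcal B_{\mu,K}}(b)\subset\{\eta\le0\}$ via upward closedness. One must also justify the representative-level statement that $b$ is nondecreasing, which follows from $b(t)-b(s)=\int_s^t\partial_\tau b\,\d\tau$ in $L^2$.
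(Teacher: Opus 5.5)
Your proposal is correct and follows the paper's proof essentially step for step. It generates the flow from Proposition~\ref{prop:H-convex-lsc} via Br\'ezis--Komura theory and identifies $\partial\mathcal H(b(t))$ through Proposition~\ref{prop:H-subdifferential}; upward closedness of $\mathcal B_{\mu,K}$ makes every normal-cone element nonpositive, so the minimal section is $-q_0$, which you then substitute into the energy identity. Your extra details (properness via Lemma~\ref{lem:Riesz}, the equality case in the minimal-norm comparison, and the Bochner-integral justification of pointwise monotonicity in time) only fill in steps that the paper asserts without elaboration.
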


\begin{proof}
By Proposition~\ref{prop:H-convex-lsc}
and the weak-cutoff energy estimate,
$\mathcal H$ is proper, convex and lower
semicontinuous on $L^2(\Omega)$,
with effective domain $\mathcal B_{\mu,K}$.
The standard Hilbert-space subgradient-flow
theory therefore gives a unique strong
solution for every initial datum in this domain,
together with the energy identity.

At almost every positive time,
$\partial\mathcal H(b(t))\neq\emptyset$.
Proposition~\ref{prop:H-subdifferential}
then gives
\[
   q(t):=\mu'(b(t))|\nabla u[\mu(b(t))]|^2
   \in L^2(\Omega)
\]
and
\[
   \partial\mathcal H(b(t))
   =-q(t)+N_{\mathcal B_{\mu,K}}(b(t)).
\]
Since $\mathcal B_{\mu,K}$ is upward closed,
every element of its normal cone is
nonpositive almost everywhere.
As $q(t)\geq0$, the minimal-norm element
of this subdifferential is therefore $-q(t)$.
Hence the gradient flow satisfies
$\partial_t b=q(t)$, proving
\eqref{eq:H-pure-flow}.

The energy identity for the subgradient flow
gives \eqref{eq:H-pure-flow-energy}.
Finally, $\partial_t b\geq0$ almost everywhere,
so $b$ is nondecreasing in time.
Since $\mu$ is increasing, the inverse-moment
integral is consequently nonincreasing.
\end{proof}

We now consider the case with nonzero metabolic cost.
Let $g$ satisfy the assumptions used in the definition of $\Gamma$ in
\eqref{def:Gamma}. Assume additionally that $g\in C^1([0,+\infty))$, and set
\[
   \mathcal E_\mu^+(b):=\mathcal H(b)+\Gamma(b).
\]
Under a valid subdifferential sum rule and provided
\[
   \mu'(b)|\nabla u[\mu(b)]|^2\in L^2(\Omega),
   \qquad g'(b)\in L^2(\Omega),
\]
the negative-subgradient evolution can be identified pointwise. When
the inverse-moment constraint is inactive, its normal multiplier
vanishes and the equation reduces to
\begin{equation*}
   \partial_t b = \mu'(b)\left|\nabla u[\mu(b)]\right|^2-g'(b).
\end{equation*}
At an active constraint the normal-cone multiplier must be retained.
%no unqualified pointwise evolution formula is asserted.

\subsection{Stationary states, duality, and power laws}

We next consider stationary states of the energy $\mathcal E_\mu^+=\mathcal H+\Gamma$.
The pointwise optimality condition for the unconstrained conductivity variable
suggests the generalized Legendre transform
\begin{equation}\label{eq:concave-generalized-conjugate}
   g_\mu^*(s):=\sup_{v\geq0}\bigl\{\mu(v)s-g(v)\bigr\},
   \qquad s\geq0.
\end{equation}
Here and throughout this subsection, $g:[0,\infty)\to[0,\infty)$ is
finite, convex and continuous at zero. We define the pressure Hamiltonian
\begin{equation}\label{eq:concave-pressure-Hamiltonian}
   H_\mu(\varphi):=\frac12\int_\Omega
      g_\mu^*(|\nabla\varphi|^2)\,\d x
      -\int_\Omega f\varphi\,\d x,
   \qquad \varphi\in H_0^1(\Omega),
\end{equation}
with the value $+\infty$ if the first integral is infinite.
The function $\xi\mapsto g_\mu^*(|\xi|^2)$ is convex,
lower semicontinuous, radial and nondecreasing in $|\xi|$;
moreover, $g_\mu^*(0)=-\inf_{v\geq0}g(v)$ is finite.

The common-core representation of $\G$ yields the weak duality inequality
\begin{equation}\label{eq:concave-stationary-weak-duality}
   \inf_{b\in\mathcal B_{\mu,K}}\mathcal E_\mu^+(b)
   \geq -2\inf_{\varphi\in H_0^1(\Omega)}H_\mu(\varphi).
\end{equation}
Indeed, for $b\in\mathcal B_{\mu,K}$ with $\Gamma(b)<+\infty$
and $\varphi\in C_0^\infty(\Omega)$,
\begin{align*}
   \mathcal E_\mu^+(b)
   &\geq 2\int_\Omega f\varphi\,\d x
      -\int_\Omega\mu(b)|\nabla\varphi|^2\,\d x
      +\int_\Omega g(b)\,\d x\\
   &\geq 2\int_\Omega f\varphi\,\d x
      -\int_\Omega g_\mu^*(|\nabla\varphi|^2)\,\d x
    =-2H_\mu(\varphi).
\end{align*}
For any $\varphi\in H_0^1(\Omega)$ of finite Hamiltonian energy,
Lemma~\ref{lem:radial-energy-approximation}, applied to
$\xi\mapsto g_\mu^*(|\xi|^2)$, gives smooth test functions
$\varphi_n\to\varphi$ in $H_0^1(\Omega)$ with
$H_\mu(\varphi_n)\to H_\mu(\varphi)$.
Thus the inequality extends to every $\varphi\in H_0^1(\Omega)$,
which proves \eqref{eq:concave-stationary-weak-duality}.

The Hamiltonian $H_\mu$ is proper, convex, lower semicontinuous and
coercive on $H_0^1(\Omega)$. In fact, choosing $v=1$ in
\eqref{eq:concave-generalized-conjugate} gives
\[
   g_\mu^*(|\nabla\varphi|^2)
   \geq \mu(1)|\nabla\varphi|^2-g(1),
\]
and the assertion follows from $\mu(1)>0$, the Poincar\'e inequality
and $f\in L^2(\Omega)$. Thus the pressure minimization problem
on the right-hand side of \eqref{eq:concave-stationary-weak-duality}
has a solution. Equality in \eqref{eq:concave-stationary-weak-duality},
however, cannot be concluded without additional information.
The next proposition provides a direct certificate for equality and global minimality.
%without assuming a general minimax theorem.

\begin{proposition}[Stationary minimizers for concave mobility]
\label{prop:concave-stationary-certificate}
%Let $\mu$ satisfy the standing assumptions of this section, and let
Let $g:[0,\infty)\to[0,\infty)$ be finite, convex and continuous at zero.
Suppose $b_0\in\mathcal B_{\mu,K}$, let $u_0:=u[\mu(b_0)]$
be its smooth-core energy solution, and assume that
\begin{equation}\label{eq:concave-stationary-calibration}
   g_\mu^*(|\nabla u_0|^2)
   =\mu(b_0)|\nabla u_0|^2-g(b_0)
   \qquad\text{a.e. in }\Omega.
\end{equation}
Then $\Gamma(b_0)<+\infty$ and $b_0$ is a global minimizer
of $\mathcal E_\mu^+$ on $L^2(\Omega)$.
If $g$ is strictly convex, this minimizer is unique.
Moreover, $u_0$ minimizes $H_\mu$ and equality holds in
\eqref{eq:concave-stationary-weak-duality}.
\end{proposition}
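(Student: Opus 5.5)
The plan mirrors the proof of Proposition~\ref{prop:stationary-minimizers}, with $g_+^*$ replaced by the generalized conjugate $g_\mu^*$ and the weak-cutoff energy solution in place of the strong-cutoff one. Set $s_0:=|\nabla u_0|^2$ and $a_0:=\mu(b_0)$.

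\emph{Step 1: finiteness of $\Gamma(b_0)$ and of $H_\mu(u_0)$.}
Testing the supremum in \eqref{eq:concave-generalized-conjugate} at $v=0$ gives $g_\mu^*(s_0)\geq -g(0)$, because $\mu(0)=0$. Combining this with \eqref{eq:concave-stationary-calibration} yields
\[
   0\leq g(b_0)\leq a_0s_0+g(0).
\]
The weighted energy identity from Lemma~\ref{lem:Riesz} gives $\int_\Omega a_0s_0\,\d x=\int_\Omega fu_0\,\d x<+\infty$. Hence $g(b_0)\in L^1(\Omega)$, that is, $\Gamma(b_0)<+\infty$. The calibration identity then shows $g_\mu^*(s_0)\in L^1(\Omega)$, so $H_\mu(u_0)$ is finite.

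To make $H_\mu(u_0)$ meaningful we need $u_0\in H_0^1(\Omega)$. Testing $v=1$ in the conjugate gives $\mu(1)s_0\leq g_\mu^*(s_0)+g(1)$, so $\nabla u_0\in L^2(\Omega)$. The zero-trace argument used in Step~3 of Proposition~\ref{prop:H-subdifferential} then gives $u_0\in H_0^1(\Omega)$.

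\emph{Step 2: the weak duality inequality at $u_0$.}
Apply \eqref{eq:concave-stationary-weak-duality}, which already holds for every $\varphi\in H_0^1(\Omega)$ thanks to Lemma~\ref{lem:radial-energy-approximation}. With $\varphi=u_0$ it gives $\mathcal E_\mu^+(b)\geq-2H_\mu(u_0)$ for every $b\in\mathcal B_{\mu,K}$. Outside $\mathcal B_{\mu,K}$ we have $\mathcal E_\mu^+=+\infty$, so the bound holds on all of $L^2(\Omega)$.

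\emph{Step 3: the key computation.} Using the calibration and the energy identity twice,
\[
   -2H_\mu(u_0)=2\int_\Omega fu_0\,\d x-\int_\Omega a_0s_0\,\d x+\int_\Omega g(b_0)\,\d x
   =\int_\Omega a_0s_0\,\d x+\Gamma(b_0).
\]
By the remark following \eqref{def:GLin}, on $\K_K$ the relaxed functional agrees with the state energy, so $\int_\Omega a_0s_0\,\d x=\G(\mu(b_0))=\mathcal H(b_0)$. Therefore $-2H_\mu(u_0)=\mathcal E_\mu^+(b_0)$.

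This one identity settles three claims at once:
\begin{itemize}
\item $b_0$ is a global minimizer of $\mathcal E_\mu^+$;
\item equality holds in \eqref{eq:concave-stationary-weak-duality};
\item $u_0$ minimizes $H_\mu$, since $-2H_\mu(\varphi)\leq\inf\mathcal E_\mu^+=-2H_\mu(u_0)$ for every $\varphi\in H_0^1(\Omega)$.
\end{itemize}

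\emph{Step 4: uniqueness.} Suppose $g$ is strictly convex. The functional $\mathcal H$ is convex by Proposition~\ref{prop:H-convex-lsc}. Moreover, $\Gamma$ is strictly convex on its domain: if $b_1\neq b_2$ on a set of positive measure, the pointwise inequality is strict there. Hence $\mathcal E_\mu^+$ is strictly convex on its effective domain, and two distinct minimizers would have a midpoint with strictly smaller energy. This gives uniqueness.

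I expect no serious obstacle. The only delicate points are the integrability of $g(b_0)$ and the membership $u_0\in H_0^1(\Omega)$, and both follow from testing the conjugate at $v=0$ and $v=1$.
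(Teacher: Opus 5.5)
Your proposal is correct and follows the paper's proof essentially step for step. The same tests $v=0$ and $v=1$ in $g_\mu^*$ give $\Gamma(b_0)<+\infty$, $g_\mu^*(|\nabla u_0|^2)\in L^1(\Omega)$ and $u_0\in H_0^1(\Omega)$; weak duality at $u_0$, combined with the calibration and the weighted energy identity, then yields $-2H_\mu(u_0)=\mathcal E_\mu^+(b_0)$, from which minimality of $b_0$, equality in the duality, minimality of $u_0$ and (via strict convexity of $\Gamma$ and convexity of $\mathcal H$) uniqueness all follow.
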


\begin{proof}
Put $a_0:=\mu(b_0)$ and $s_0:=|\nabla u_0|^2$.
Since $b_0\in\mathcal B_{\mu,K}$, the weighted energy identity gives
\begin{equation}\label{eq:concave-stationary-core-identity}
   \int_\Omega a_0s_0\,\d x
   =\int_\Omega fu_0\,\d x
   =\G(a_0)<+\infty.
\end{equation}
From \eqref{eq:concave-generalized-conjugate} with $v=0$
and \eqref{eq:concave-stationary-calibration}, we obtain
\[
   0\leq g(b_0)\leq a_0s_0+g(0).
\]
Hence $\Gamma(b_0)<+\infty$ and
$g_\mu^*(s_0)=a_0s_0-g(b_0)$ belongs to $L^1(\Omega)$.
Furthermore, evaluating the supremum in
\eqref{eq:concave-generalized-conjugate} at $v=1$ gives
\[
   \mu(1)|\nabla u_0|^2
   \leq g_\mu^*(s_0)+g(1),
\]
so $\nabla u_0\in L^2(\Omega)$.
The weak-cutoff embedding gives
$u_0\in W_0^{1,2d/(d+2)}(\Omega)\cap L^2(\Omega)$;
its zero-trace property, together with $\nabla u_0\in L^2$,
then implies $u_0\in H_0^1(\Omega)$, also when $d=2$.
Thus $H_\mu(u_0)<+\infty$ and $u_0$ is an admissible
test in the weak duality inequality.

For every $b\in\mathcal B_{\mu,K}$,
the weak duality inequality established above gives
\begin{align*}
   \mathcal E_\mu^+(b)
   &\geq -2H_\mu(u_0)\\
   &=2\int_\Omega fu_0\,\d x
     -\int_\Omega g_\mu^*(s_0)\,\d x\\
   &=2\int_\Omega fu_0\,\d x
     -\int_\Omega a_0s_0\,\d x
     +\int_\Omega g(b_0)\,\d x\\
   &=\G(a_0)+\Gamma(b_0)
    =\mathcal E_\mu^+(b_0),
\end{align*}
where the last equality uses
\eqref{eq:concave-stationary-core-identity}.
This proves global minimality. If $g$ is strictly convex,
then $\Gamma$ is strictly convex on its effective domain,
while $\mathcal H$ is convex. Therefore the minimizer is unique.
Finally, since
$\mathcal E_\mu^+(b_0)=-2H_\mu(u_0)$,
the weak duality inequality implies that $u_0$ minimizes
$H_\mu$ and that equality holds in
\eqref{eq:concave-stationary-weak-duality}.
\end{proof}

Condition \eqref{eq:concave-stationary-calibration} means that
$b_0(x)$ attains the pointwise supremum in
\eqref{eq:concave-generalized-conjugate} at
$s=|\nabla u_0(x)|^2$. In particular,
\begin{equation}\label{eq:concave-stationary-conjugacy}
   a_0=\mu(b_0)\in\partial g_\mu^*(|\nabla u_0|^2)
   \qquad\text{a.e. in }\Omega.
\end{equation}
Since $b_0>0$ almost everywhere, this pointwise optimality
condition is equivalently expressed as
\begin{equation}\label{eq:concave-stationary-pointwise}
   \mu'(b_0)|\nabla u_0|^2\in\partial g(b_0)
   \qquad\text{a.e. in }\Omega.
\end{equation}
When $g$ is differentiable, this reduces to
\[
   g'(b_0)=\mu'(b_0)|\nabla u_0|^2.
\]
\iffalse
Thus the proposition certifies a supplied stationary pair;
it does not establish the existence of a conductivity recovered
from an arbitrary minimizing pressure.
The inverse-moment constraint need not be strictly inactive:
the conclusion also applies when its defining integral equals $K$.
On the other hand, stationary states involving a nonzero
constraint multiplier are not covered by the pointwise
calibration condition.
\fi

We now specialize to power mobilities and obtain an explicit
$p$-Laplacian reduction. Let
\begin{equation}\label{eq:concave-power-mobility}
   \mu(s)=\frac{s^\sigma}{\sigma},\qquad
   g(s)=\frac{s^\lambda}{\lambda},
   \qquad 0<\sigma\leq1,\quad\lambda>1.
\end{equation}
A direct maximization gives
\begin{equation}\label{eq:concave-power-conjugate}
   g_\mu^*(s)=\frac{\lambda-\sigma}{\sigma\lambda}
   s^{\lambda/(\lambda-\sigma)},\qquad
   p:=\frac{2\lambda}{\lambda-\sigma}>2.
\end{equation}
The pressure Hamiltonian is therefore
\[
   H_\mu(\varphi)
   =\frac{\lambda-\sigma}{2\sigma\lambda}
      \int_\Omega|\nabla\varphi|^p\,\d x
      -\int_\Omega f\varphi\,\d x,
   \qquad \varphi\in W_0^{1,p}(\Omega).
\]
Its unique minimizer $u_0\in W_0^{1,p}(\Omega)$
solves
\begin{equation}\label{eq:p-laplace}
   \begin{cases}
   -\displaystyle\frac{1}{\sigma}\operatorname{div}
       \bigl(|\nabla u_0|^{p-2}\nabla u_0\bigr)=f
       &\text{in }\Omega,\\
   u_0=0&\text{on }\partial\Omega.
   \end{cases}
\end{equation}
The pointwise maximizing coefficient is
\begin{equation}\label{eq:concave-power-candidate}
   b_0=|\nabla u_0|^{2/(\lambda-\sigma)},\qquad
   a_0:=\mu(b_0)=\frac1\sigma|\nabla u_0|^{p-2}.
\end{equation}

Suppose that
\begin{equation}\label{eq:concave-power-admissibility}
   b_0\in L^2(\Omega),\qquad
   \sigma^{d/2}\int_\Omega
       |\nabla u_0|^{(2-p)d/2}\,\d x\leq K.
\end{equation}
The first condition ensures $a_0\in L^2(\Omega)$, while
the second gives $a_0>0$ almost everywhere and the
inverse-moment bound. Together they imply
$b_0\in\mathcal B_{\mu,K}$.
The $L^2$ condition is automatic for $\lambda\geq2$, because
\[
   \int_\Omega b_0^\lambda\,\d x
   =\int_\Omega|\nabla u_0|^p\,\d x<+\infty.
\]
%It can require additional regularity when $1<\lambda<2$.

We verify that $u_0$ is the selected smooth-core pressure
for the coefficient $a_0$. Indeed,
\[
   a_0\in L^{p/(p-2)}(\Omega),
   \qquad a_0|\nabla u_0|^2=\frac1\sigma|\nabla u_0|^p
   \in L^1(\Omega).
\]
For every sequence $\varphi_n\in C_0^\infty(\Omega)$
converging to $u_0$ in $W_0^{1,p}(\Omega)$,
H\"older's inequality gives
\[
   \int_\Omega a_0|\nabla(\varphi_n-u_0)|^2\,\d x
   \leq\|a_0\|_{L^{p/(p-2)}(\Omega)}
       \|\nabla(\varphi_n-u_0)\|_{L^p(\Omega)}^2
   \longrightarrow0.
\]
Hence $u_0\in\Ha{a_0}$.
The weak formulation of \eqref{eq:p-laplace}, initially tested
against smooth functions, extends to $\Ha{a_0}$
by the weighted Cauchy--Schwarz inequality and the
weak-cutoff embedding. Thus $u_0=u[a_0]$.
Finally, \eqref{eq:concave-power-candidate} implies that
$b_0$ attains the supremum defining $g_\mu^*$ at
$|\nabla u_0|^2$ almost everywhere.
Proposition~\ref{prop:concave-stationary-certificate}
therefore shows that $b_0$ is the unique global minimizer of $\mathcal E_\mu^+$.
% whether the inverse-moment constraint is inactive or active.

By Proposition \ref{prop:H-convex-lsc}, $\mathcal H \geq 0$ is convex and lower semicontinuous.
Since $\Gamma$ is lso nonnegative, convex and lower semicontinuous,
$\mathcal E_\mu^+$ is proper, convex and lower semicontinuous on $L^2(\Omega)$.
It therefore generates a unique $L^2$-subgradient flow  for every initial datum in
$\overline{\operatorname{dom}\mathcal E_\mu^+}^{L^2}$, which contains $b_0$.
Global minimality gives $0\in \partial \mathcal E_\mu^+(b_0)$.
trict convexity of the power cost therefore makes $b_0$
the unique equilibrium of this abstract flow.

%%%%%%%%%%%%%%%%%%%%%%%%%%%%%%%%%%%%%%%%%%%%%%
\section{Reciprocal-concave mobility}
\label{sec:ReciprocalConcave}

The sufficient condition \eqref{eq:concavity-condition}, obtained from the
formal second-variation calculation, identifies concavity of $1/\mu$ as a
regime in which the pumping-energy functional has the opposite curvature from
that studied in Section~\ref{sec:ConcaveMobility}. We now place this
observation in the degenerate variational setting. The reciprocal coefficient
must possess sufficient integrability to define the weighted elliptic problem,
while the possibly singular Nemytskii map $b\mapsto\mu(b)$ must take admissible
coefficients into $L^2(\Omega)$. Assumption \textup{(R1)} below encodes the
curvature condition, and \textup{(R2)--(R3)} provide the required integrability.

The flux representation will yield concavity of the pumping energy and hence
convexity of the sign-reversed functional. In contrast to the preceding
sections, we do not develop an unrestricted $L^2$-gradient-flow theory in this
regime: continuity of the coefficient map does not imply continuity of the
selected weighted pressure or of its pumping energy. Instead, we establish a
direct criterion for global minimality of stationary states.
%requiring no continuity, subdifferential sum rule, or chain rule.

\subsection{Structural assumptions and admissible coefficients}

We begin with the curvature assumption
\begin{itemize}
\item[(R1)] $\mu:(0,+\infty)\to(0,+\infty)$ is of class $C^2$, and
   $h:=1/\mu$ is concave on $(0,+\infty)$.
\end{itemize}
Because $h$ is positive and concave on an unbounded interval, it cannot have a
negative derivative: otherwise concavity would force it to become negative at
large arguments. Consequently,
\begin{equation}\label{eq:reciprocal-monotonicity}
   h'=-\frac{\mu'}{\mu^2}\geq0,
   \qquad
   h''=\frac{2(\mu')^2-\mu\mu''}{\mu^3}\leq0.
\end{equation}
Thus $\mu'\leq0$ and $\mu''\geq0$. The endpoint limits
\begin{equation*}
   m_0:=\lim_{s\to 0^+}\mu(s)\in(0,+\infty],
   \qquad
   m_\infty:=\lim_{s\to+\infty}\mu(s)\in[0,m_0]
\end{equation*}
exist. Since $\mu$ is non-increasing, the equality $m_0=m_\infty$ occurs
if and only if $\mu$ is constant.

Assumption \textup{(R1)} controls the curvature and monotonicity of the
mobility, but it does not by itself guarantee that the elliptic problem is
weakly coercive. Since $h(b)=\mu(b)^{-1}$, the theory of
Section~\ref{sec:WeakCutoff} requires $h(b)\in L^{d/2}(\Omega)$. Every
positive concave function has an affine upper bound, which is sufficient for
this purpose when $d\leq4$. To cover all dimensions with a single hypothesis,
set
\begin{equation} \label{def:alpha}
   \alpha:=\min\left\{1,\frac4d\right\}
\end{equation}
and assume
\begin{itemize}
\item[(R2)] there are $C_0,C_1\geq0$ such that
\begin{equation}\label{eq:reciprocal-growth}
   h(s)=\frac1{\mu(s)}\leq C_0+C_1s^\alpha
   \qquad \mbox{for all } s>0.
\end{equation}
\end{itemize}
For $d\leq4$, this follows from (R1), with $\alpha=1$. Extend $h$ continuously
to zero by $h(0):=1/m_0$, with the convention $1/(+\infty)=0$. In every
dimension, \eqref{eq:reciprocal-growth} yields
\begin{equation}\label{eq:inverse-weight-bound}
   \int_\Omega h(b)^{d/2}\,\d x
   \leq C\bigl(1+\|b\|_{L^2(\Omega)}^2\bigr)
   \qquad \mbox{for } b\in L^2_+(\Omega),
\end{equation}
where the constant also depends on $|\Omega|$.

If $m_0<+\infty$, we extend $\mu$ continuously to $s=0$ and set
$\mathcal A:=L^2_+(\Omega)$. If $m_0=+\infty$, we additionally assume
\begin{itemize}
\item[(R3)] for some $E_0,E_1\geq0$ and $0<\beta\leq\alpha$,
\begin{equation*}
   \mu(s)\leq E_0+E_1s^{-\beta}\qquad \mbox{for all } s>0.
\end{equation*}
\end{itemize}
Choose $\gamma>2\beta$ and $\eta>0$, and define
\begin{equation}\label{def:reciprocal-admissible}
   \mathcal A:= %\mathcal A_{\eta,\gamma} :=
   \left\{b\in L^2_+(\Omega):
      \int_\Omega b^{-\gamma}\,\d x\leq\eta\right\}.
\end{equation}
Here and below we set $0^{-\gamma}:=+\infty$. The set $\mathcal A$ is closed and
convex in $L^2(\Omega)$ by the same integral-sublevel argument as in
Lemma~\ref{lem:KKcc}. The condition $\gamma>2\beta$ is chosen so
that the singular growth allowed in \textup{(R3)} yields uniform
integrability of $\mu(b_n)^2$ along convergent sequences in $\mathcal A$.

Thus \textup{(R2)} controls the reciprocal coefficient needed for weak
coercivity, whereas \textup{(R3)} controls the coefficient itself near the
singular endpoint $b=0$. The following proposition establishes the resulting
continuity properties.

\begin{proposition}\label{prop:reciprocal-nemytskii}
Under assumptions \textup{(R1)--(R3)}, with \textup{(R3)} required only when
$m_0=+\infty$, the maps
\begin{equation*}
   b\mapsto\mu(b):\mathcal A\to L^2_+(\Omega),
   \qquad
   b\mapsto\frac1{\mu(b)}:\mathcal A\to L^{d/2}(\Omega)
\end{equation*}
are continuous in the indicated norm topologies.
\end{proposition}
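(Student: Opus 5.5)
The plan is to use the subsequence principle together with Vitali's convergence theorem. Let $b_n\to b$ in $L^2(\Omega)$ with $b_n,b\in\mathcal A$. It suffices to show that every subsequence has a further subsequence along which $\mu(b_n)\to\mu(b)$ in $L^2$ and $h(b_n)\to h(b)$ in $L^{d/2}$. Accordingly, pass to a subsequence with $b_n\to b$ almost everywhere. Since $\mathcal A$ is closed, $b\in\mathcal A$.

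We first establish pointwise convergence. By (R1), $\mu$ is continuous and positive on $(0,+\infty)$, and $h=1/\mu$ extends continuously to $[0,+\infty)$ with $h(0)=1/m_0$.
\begin{itemize}
\item If $m_0<+\infty$, then $\mu$ is continuous on $[0,+\infty)$. Hence $\mu(b_n)\to\mu(b)$ and $h(b_n)\to h(b)$ almost everywhere.
\item If $m_0=+\infty$, the constraint $\int_\Omega b^{-\gamma}\,\d x\le\eta$ forces $b>0$ almost everywhere. Continuity of $\mu$ and $h$ on $(0,+\infty)$ then gives both pointwise limits.
\end{itemize}

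Next, for the reciprocal map we control the integrand by (R2):
\[
h(b_n)^{d/2}\le C\bigl(1+b_n^{\alpha d/2}\bigr).
\]
Since $\alpha d/2\le 2$ by \eqref{def:alpha}, we have $b_n^{\alpha d/2}\le 1+b_n^2$. Because $b_n^2\to b^2$ in $L^1$, the family $(b_n^2)$ is uniformly integrable. Hence $(h(b_n)^{d/2})$ is uniformly integrable, and Vitali's theorem gives $\int_\Omega h(b_n)^{d/2}\to\int_\Omega h(b)^{d/2}$. Combined with almost everywhere convergence, a Brezis--Lieb (or Vitali applied to $|h(b_n)-h(b)|^{d/2}$) argument yields convergence in $L^{d/2}$.

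For the map $\mu$, monotonicity $\mu'\le0$ from \eqref{eq:reciprocal-monotonicity} is the key.
\begin{itemize}
\item If $m_0<+\infty$, then $0\le\mu(b_n)\le m_0$. Dominated convergence on the bounded domain gives $\mu(b_n)\to\mu(b)$ in $L^2$.
\item If $m_0=+\infty$, (R3) gives $\mu(b_n)^2\le 2E_0^2+2E_1^2b_n^{-2\beta}$.
\end{itemize}

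The main step is showing that $(b_n^{-2\beta})$ is uniformly integrable. I would use the de la Vallée-Poussin criterion: $b_n^{-2\beta}=\Psi(b_n^{-2\beta})^{\cdots}$, more concretely
\[
\int_E b_n^{-2\beta}\,\d x\le |E|^{1-2\beta/\gamma}\Bigl(\int_\Omega b_n^{-\gamma}\,\d x\Bigr)^{2\beta/\gamma}\le |E|^{1-2\beta/\gamma}\eta^{2\beta/\gamma}
\]
by Hölder's inequality with exponent $\gamma/(2\beta)>1$. This is uniform in $n$ and tends to $0$ as $|E|\to0$, which is exactly where $\gamma>2\beta$ enters. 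Thus $(\mu(b_n)^2)$ is uniformly integrable. Since $\mu(b_n)\to\mu(b)$ almost everywhere, and $\mu(b)\in L^2$ by Fatou or the same bound, Vitali's theorem applied to $|\mu(b_n)-\mu(b)|^2\le 2\mu(b_n)^2+2\mu(b)^2$ gives $L^2$ convergence.

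Finally, by the subsequence principle the full sequences converge. This also confirms that both maps indeed take values in $L^2_+(\Omega)$ and $L^{d/2}(\Omega)$, via \eqref{eq:inverse-weight-bound} and the Hölder bound above.
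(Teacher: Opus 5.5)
Your proof is correct and follows the paper's argument: subsequence principle and a.e.\ convergence, then uniform integrability from \textup{(R2)} with $\alpha d/2\le 2$ and from \textup{(R3)} with $\gamma>2\beta$ (your H\"older bound is exactly the de la Vall\'ee Poussin step), then Vitali. The only differences are that you prove the $L^{d/2}$-continuity of $1/\mu(b)$ directly rather than citing the standard Nemytskii continuity theorem, and that you should delete the unfinished $\Psi$ fragment, since the H\"older estimate that follows it already does the job.
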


\begin{proof}
The second assertion is the standard continuity theorem for Nemytskii
operators, using \eqref{eq:reciprocal-growth} and
$\alpha d/2\leq2$. If $m_0<+\infty$, the non-increasing function $\mu$ is
bounded and continuous on $[0,+\infty)$, so the first assertion follows from
convergence in measure and dominated convergence.

If $m_0=+\infty$, let $b_n\to b$ in $L^2(\Omega)$ with
$b_n,b\in\mathcal A$. Every subsequence has a further subsequence converging
almost everywhere. Moreover,
\begin{equation*}
   \mu(b_n)^2\leq C\bigl(1+b_n^{-2\beta}\bigr),
   \qquad
   \sup_n\int_\Omega
      \bigl(b_n^{-2\beta}\bigr)^{\gamma/(2\beta)}\,\d x
   \leq\eta.
\end{equation*}
Since $\gamma/(2\beta)>1$, the de la Vall\'ee Poussin criterion gives uniform
integrability. Vitali's theorem then yields
$\mu(b_n)\to\mu(b)$ in $L^2(\Omega)$ along that further subsequence. The
subsequence principle gives convergence of the whole sequence.
\end{proof}

In what follows, Proposition~\ref{prop:reciprocal-nemytskii} together with
\eqref{eq:inverse-weight-bound} ensures that $a=\mu(b)\in L^2_+(\Omega)$ and
$a^{-1/2}\in L^d(\Omega)$, so that the weighted energy solution is well
defined by Section~\ref{sec:WeakCutoff}.

\subsection{Concavity of the pumping-energy functional}

For $b\in\mathcal A$, let $u[\mu(b)]$ be the energy solution of
\eqref{eq:Poisson_mub} and set
\begin{equation}\label{def:I-mu-reciprocal}
   I_\mu(b):=\G(\mu(b))
   =\int_\Omega\mu(b)|\nabla u[\mu(b)]|^2\,\d x.
\end{equation}
Thus $I_\mu=2J_\mu$ with the normalization in \eqref{eq:J-def}.
The functional $I_\mu$ is finite throughout $\mathcal A$ by the
weak-cutoff energy estimate. Its concavity follows directly from the
flux representation, Proposition~\ref{prop:flux-representation}.

\begin{proposition}\label{prop:I-mu-cont-concave}
Under \textup{(R1)--(R3)}, with \textup{(R3)} required only when
$m_0=+\infty$, the functional $I_\mu:\mathcal A\to\R$ is concave.
\end{proposition}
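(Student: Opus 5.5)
The proof will go through the flux representation of Proposition~\ref{prop:flux-representation}. For every $b\in\mathcal A$, Proposition~\ref{prop:reciprocal-nemytskii} gives $\mu(b)\in L^2_+(\Omega)$, so that representation applies with $a=\mu(b)$. Since $\mu(b)>0$ everywhere, the convention on the zero set plays no role. Writing $h=1/\mu$, we get
\[
   I_\mu(b)=\G(\mu(b))=\inf_{j\in\mathcal Q_f}\int_\Omega h(b)\,|j|^2\,\d x .
\]
The crucial feature is that the coefficient now enters linearly through $h(b)$. Moreover, the admissible set $\mathcal Q_f$ does not depend on $b$.

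The plan is to show that, for each fixed $j\in\mathcal Q_f$, the map
\[
   b\mapsto\Psi_j(b):=\int_\Omega h(b)|j|^2\,\d x
\]
is concave on the convex set $\mathcal A$, with values in $[0,+\infty]$. Concavity of $I_\mu$ then follows, because an infimum of concave functions is concave. Concretely, take $b_1,b_2\in\mathcal A$, $t\in[0,1]$ and $b_t:=tb_1+(1-t)b_2$; this lies in $\mathcal A$ by convexity of $\mathcal A$. Assumption (R1) gives the pointwise inequality $h(b_t)\geq t\,h(b_1)+(1-t)h(b_2)$. This holds at points where some $b_i=0$ as well, because $h$ is extended continuously to $0$ by $h(0)=1/m_0$, and concavity survives the continuous extension to the closed half-line. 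Multiplying by $|j|^2\geq0$ and integrating gives
\[
   \Psi_j(b_t)\geq t\Psi_j(b_1)+(1-t)\Psi_j(b_2)\geq t\,I_\mu(b_1)+(1-t)I_\mu(b_2).
\]
Taking the infimum over $j\in\mathcal Q_f$ yields $I_\mu(b_t)\geq tI_\mu(b_1)+(1-t)I_\mu(b_2)$.

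It remains to check that $I_\mu$ is real-valued on $\mathcal A$, so that this is concavity in the usual sense with no $\infty-\infty$ issues. Nonnegativity is immediate from the representation. Finiteness comes from \eqref{eq:inverse-weight-bound}: it gives $\mu(b)^{-1/2}\in L^d(\Omega)$ with a bound in terms of $\|b\|_{L^2}$. The weak-cutoff construction of Section~\ref{sec:WeakCutoff}, namely Lemma~\ref{lem:embdL2} and Lemma~\ref{lem:Riesz} with $K$ replaced by $\int_\Omega h(b)^{d/2}\,\d x$, then produces $u[\mu(b)]$. The smooth-core identity shows that $\G(\mu(b))=\int_\Omega fu\,\d x<\infty$.

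The only delicate points are these. First, the flux representation requires $a\in L^2_+(\Omega)$, which is exactly what Proposition~\ref{prop:reciprocal-nemytskii} supplies, using (R3) when $m_0=+\infty$. Second, one must match the convention for the integrand at points where $b=0$: there $h(0)=0$ when $m_0=+\infty$, and in $\mathcal A$ such points form a null set anyway, because $\int_\Omega b^{-\gamma}\,\d x<\infty$. I expect no real obstacle; the main care is this bookkeeping at $b=0$.
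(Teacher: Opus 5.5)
Your proposal is correct and follows the paper's proof essentially verbatim. Both write $I_\mu(b)=\inf_{j\in\mathcal Q_f}\int_\Omega |j|^2h(b)\,\d x$ via Proposition~\ref{prop:flux-representation}, use the pointwise concavity of $h=1/\mu$ for each fixed flux, and take the infimum over the $b$-independent set $\mathcal Q_f$; your additional bookkeeping (finiteness of $I_\mu$ on $\mathcal A$ via \eqref{eq:inverse-weight-bound} and the weak-cutoff solvability, and the harmless behaviour at $b=0$) is sound and makes explicit what the paper leaves implicit.
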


\begin{proof}
By Proposition~\ref{prop:flux-representation}, the functional
$I_\mu$ admits the representation
\begin{equation}\label{eq:flux-representation}
   I_\mu(b)=\inf_{j\in\mathcal Q_f}
      \int_\Omega\frac{|j|^2}{\mu(b)}\,\d x,
\end{equation}
where $\mathcal Q_f$ is defined in
Proposition~\ref{prop:flux-representation}.

Now let $b_t:=tb_1+(1-t)b_0$, where $b_0,b_1\in\mathcal A$ and
$t\in[0,1]$. For every $j\in\mathcal Q_f$, concavity of $h=1/\mu$
gives
\begin{align*}
   \int_\Omega |j|^2h(b_t)\,\d x
   &\geq t\int_\Omega |j|^2h(b_1)\,\d x
      +(1-t)\int_\Omega |j|^2h(b_0)\,\d x\\
   &\geq tI_\mu(b_1)+(1-t)I_\mu(b_0).
\end{align*}
Taking the infimum over $j\in\mathcal Q_f$ proves the claim.
\end{proof}

\begin{remark}
The continuity of the Nemytskii maps in
Proposition~\ref{prop:reciprocal-nemytskii} does not imply continuity of
$I_\mu$. For unbounded weights, a weak limit of selected weighted pressure
solutions may have finite weighted Dirichlet energy without belonging to
the smooth-core energy space for the limiting weight; compare
the weighted-space phenomena discussed in~\cite{Zhikov}. This distinction
can lead to a jump in $I_\mu$ even under strong convergence of the
coefficients. Accordingly, neither continuity of $I_\mu$ nor lower
semicontinuity of $-I_\mu$ is asserted here.
\end{remark}

\subsection{The sign-reversed convex energy and stationary minimizers}

The natural convex functional associated with concave $1/\mu$ is obtained
by changing the sign of the pumping term.
Let $g:[0,+\infty)\to[0,+\infty]$ be proper, convex and lower
semicontinuous, and define
\begin{equation}\label{def:reciprocal-energy}
   \mathcal E_\mu^-(b):=
   \begin{cases}
   \displaystyle
      -\frac12 I_\mu(b)+\int_\Omega g(b)\,\d x,
      &b\in\mathcal A,\\[2mm]
      +\infty,&b\notin\mathcal A.
   \end{cases}
\end{equation}
The difference in sign compared with the concave-mobility regime
reflects the curvature of $I_\mu$; the factor $1/2$ is consistent
with \eqref{eq:J-def}.

\begin{proposition}\label{prop:reciprocal-energy-convex}
Under assumptions \textup{(R1)--(R3)}, with \textup{(R3)} required only
when $m_0=+\infty$, the functional $\mathcal E_\mu^-$ defined in
\eqref{def:reciprocal-energy} is convex on $L^2(\Omega)$.
%In general it need not be lower semicontinuous.
\end{proposition}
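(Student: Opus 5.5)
The plan is to write $\mathcal E_\mu^-$ as a sum of three convex extended-real functionals on $L^2(\Omega)$ and then use that a sum of convex functionals with values in $\Rext$ is convex. Concretely,
\[
   \mathcal E_\mu^-=\Bigl(-\tfrac12 I_\mu+\iota_{\mathcal A}\Bigr)+\widetilde\Gamma ,
   \qquad
   \widetilde\Gamma(b):=\int_\Omega g(b)\,\d x \ \text{ on } L^2_+(\Omega),
\]
with $\widetilde\Gamma:=+\infty$ off the nonnegative cone. Here $\iota_{\mathcal A}$ is the indicator of the admissible set \eqref{def:reciprocal-admissible}, or of $L^2_+(\Omega)$ when $m_0<+\infty$.

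\emph{Step 1: the constraint.} The set $\mathcal A$ is convex: for $m_0<+\infty$ it is the cone $L^2_+(\Omega)$, and otherwise it is a sublevel set of the convex integral functional $b\mapsto\int_\Omega b^{-\gamma}\,\d x$, by the argument of Lemma~\ref{lem:KKcc}. Hence $\iota_{\mathcal A}$ is convex.

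\emph{Step 2: the pumping term.} By Proposition~\ref{prop:I-mu-cont-concave}, $I_\mu$ is real-valued and concave on the convex set $\mathcal A$. Real-valuedness holds because $I_\mu(b)=\G(\mu(b))$ is finite there, by Proposition~\ref{prop:reciprocal-nemytskii}, \eqref{eq:inverse-weight-bound} and Lemma~\ref{lem:Riesz}. Therefore $-\frac12 I_\mu$ is a finite convex function on $\mathcal A$, and extending it by $+\infty$ outside $\mathcal A$ gives a convex functional on $L^2(\Omega)$. This is the standard fact that a convex function on a convex set, extended by $+\infty$, is convex. The finiteness matters: it rules out any $-\infty+\infty$ ambiguity in the sum.

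\emph{Step 3: the metabolic term.} For $g:[0,\infty)\to[0,+\infty]$ convex, pointwise convexity gives
\[
   g\bigl(tb_1+(1-t)b_0\bigr)\leq t\,g(b_1)+(1-t)\,g(b_0)
   \qquad\text{a.e.\ in }\Omega .
\]
Since $g\geq0$, the integral is well defined in $[0,+\infty]$ for every measurable $b\geq0$, and integrating the pointwise inequality gives convexity of $\widetilde\Gamma$, with the value $+\infty$ allowed. Measurability of $g(b)$ follows from lower semicontinuity of $g$.

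\emph{Step 4: conclusion.} Take $b_0,b_1\in L^2(\Omega)$. If either lies outside $\mathcal A$, the convexity inequality is trivial. Otherwise $b_t:=tb_1+(1-t)b_0\in\mathcal A$ by Step 1, and adding the inequalities of Steps 2 and 3 gives
\[
   \mathcal E_\mu^-(b_t)\leq t\,\mathcal E_\mu^-(b_1)+(1-t)\,\mathcal E_\mu^-(b_0).
\]
Both sides are well defined because the pumping part is finite on $\mathcal A$ and the metabolic part lies in $[0,+\infty]$.

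The only delicate point is the arithmetic of extended values, and it is handled by the finiteness of $I_\mu$ on $\mathcal A$. No lower semicontinuity is claimed or needed.
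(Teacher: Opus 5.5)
Your proposal is correct and follows the paper's proof. Both arguments combine concavity of $I_\mu$ on the convex set $\mathcal A$ from Proposition~\ref{prop:I-mu-cont-concave}, convexity of the metabolic integral, and extension by $+\infty$ outside $\mathcal A$. Your finiteness of $I_\mu$ on $\mathcal A$, which excludes $-\infty+\infty$, only makes explicit what the paper leaves implicit.
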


\begin{proof}
By Proposition~\ref{prop:I-mu-cont-concave}, $-I_\mu$ is convex on
the convex set $\mathcal A$. The integral functional associated
with $g$ is convex, and extension by $+\infty$ outside $\mathcal A$ preserves convexity.
%Lower semicontinuity does not follow: although the integral functional is lower semicontinuous,
%the term $-I_\mu$ need not be, as discussed above.
\end{proof}

Without lower semicontinuity, convexity alone does not justify the
maximal-monotone construction of a global gradient flow. However,
global minimizers can be verified directly through the flux
representation. The following sufficient condition applies to arbitrary
convex metabolic costs and does not require the constraint to be
strictly inactive.

\begin{proposition}[Stationary flux certificate]
\label{prop:reciprocal-stationary-certificate}
Assume \textup{(R1)--(R3)}, with \textup{(R3)} required only when
$m_0=+\infty$, and let $g:[0,+\infty)\to[0,+\infty]$ be proper,
convex and lower semicontinuous. Suppose that $b_*\in\mathcal A$
is positive almost everywhere, that
\[\int_\Omega g(b_*)\,\d x<+\infty,\]
and that $u_*:=u[\mu(b_*)]$ is its selected smooth-core energy
solution. If
\begin{equation}\label{eq:reciprocal-stationary-certificate}
   -\frac12\mu'(b_*)|\nabla u_*|^2
   \in\partial g(b_*)\qquad\text{almost everywhere in }\Omega,
\end{equation}
%where $\partial g$ is the subdifferential of the scalar convex
%function $g$ (extended by $+\infty$ to negative arguments),
then $b_*$ is a global minimizer of $\mathcal E_\mu^-$ on
$L^2(\Omega)$. If $g$ is strictly convex, this minimizer is unique.
%No continuity of $I_\mu$, $L^2$-integrability of the density in
%\eqref{eq:reciprocal-stationary-certificate}, or subdifferential sum rule is required.
\end{proposition}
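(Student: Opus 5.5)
The plan is to compare an arbitrary competitor $b\in\mathcal A$ with $b_*$ by inserting into the flux representation \eqref{eq:flux-representation} the optimal flux of $b_*$, namely $j_*:=\mu(b_*)\nabla u_*$. This exploits the fact that $I_\mu(b)$ is an infimum: any admissible flux gives an upper bound for $I_\mu(b)$, hence a lower bound for $-\frac12 I_\mu(b)$, and no derivative of $b\mapsto u[\mu(b)]$ or continuity of $I_\mu$ is needed.

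\emph{Step 1: $j_*$ is admissible.} We need $j_*\in\mathcal Q_f$. Since $u_*$ is the weighted energy solution, $-\operatorname{div} j_*=f$ holds in $\mathcal D'(\Omega)$, by Lemma~\ref{lem:Riesz} together with Proposition~\ref{prop:reciprocal-nemytskii}. For integrability, write $j_*=\sqrt{\mu(b_*)}\,\bigl(\sqrt{\mu(b_*)}\nabla u_*\bigr)$. Here $\sqrt{\mu(b_*)}\in L^4$ because $\mu(b_*)\in L^2$, and the second factor lies in $L^2$. H\"older's inequality then gives $j_*\in L^{4/3}$. Moreover,
\[
\int_\Omega |j_*|^2h(b_*)\,\d x=\int_\Omega \mu(b_*)|\nabla u_*|^2\,\d x=I_\mu(b_*)<+\infty,
\]
so $j_*$ attains the infimum for $b_*$. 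Consequently $I_\mu(b)\le\int_\Omega|j_*|^2h(b)\,\d x$ for every $b\in\mathcal A$.

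\emph{Step 2: a pointwise inequality.} Set $w:=-\frac12\mu'(b_*)|\nabla u_*|^2\ge0$. This is finite a.e. because $b_*>0$ a.e. and $\mu\in C^2(0,\infty)$. Since $h'=-\mu'/\mu^2$, we have $\frac12|j_*|^2h'(b_*)=w$. Concavity of $h$ on $[0,\infty)$ (with the continuous extension to $0$) gives
\[
h(b)\le h(b_*)+h'(b_*)(b-b_*).
\]
The certificate \eqref{eq:reciprocal-stationary-certificate} gives $g(b)\ge g(b_*)+w(b-b_*)$. Multiplying the first inequality by $\frac12|j_*|^2$ and subtracting it from the second, the terms $\pm w(b-b_*)$ cancel pointwise, and we obtain
\[
g(b)-\tfrac12|j_*|^2h(b)\;\ge\; g(b_*)-\tfrac12\mu(b_*)|\nabla u_*|^2\qquad\text{a.e. in }\Omega .
\]
Performing the cancellation before integrating is the key point: the integrability of $w(b-b_*)$ is never needed. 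This is the step I expect to require the most care.

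\emph{Step 3: integration and conclusion.} The right-hand side above lies in $L^1$, because $\int_\Omega g(b_*)\,\d x<\infty$ and $\mu(b_*)|\nabla u_*|^2\in L^1$. Hence the integral of the left-hand side is well defined in $(-\infty,+\infty]$. If $\int_\Omega g(b)\,\d x=+\infty$, then $\mathcal E_\mu^-(b)=+\infty$, since $I_\mu(b)$ is finite on $\mathcal A$, and there is nothing to prove. Otherwise the inequality forces $\int_\Omega|j_*|^2h(b)\,\d x<\infty$. Integrating and using Step 1 then yields
\[
\mathcal E_\mu^-(b)\ge \int_\Omega g(b)\,\d x-\tfrac12\int_\Omega|j_*|^2h(b)\,\d x\ge \mathcal E_\mu^-(b_*).
\]
For $b\notin\mathcal A$ the inequality is trivial, since $\mathcal E_\mu^-(b)=+\infty$.

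\emph{Uniqueness.} If $g$ is strictly convex, then $b\mapsto\int_\Omega g(b)\,\d x$ is strictly convex on its domain. Since $-\frac12I_\mu$ is convex by Proposition~\ref{prop:I-mu-cont-concave}, $\mathcal E_\mu^-$ is strictly convex on its effective domain. Two distinct minimizers would then have a midpoint with strictly smaller energy, a contradiction.
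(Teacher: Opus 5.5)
Your proposal is correct and follows essentially the same route as the paper. You insert the optimal flux $j_*=\mu(b_*)\nabla u_*$ into the flux representation of $I_\mu(b)$, cancel the terms $w(b-b_*)$ pointwise by combining the subgradient inequality for $g$ with the tangent-line inequality for the concave $h=1/\mu$ before integrating, and obtain uniqueness from strict convexity of the metabolic term together with convexity of $-\tfrac12 I_\mu$. Your explicit H\"older check that $j_*\in L^{4/3}$ and your treatment of the case $\int_\Omega g(b)\,\d x=+\infty$ only spell out details that the paper states briefly.
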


\begin{proof}
Put $a_*:=\mu(b_*)$, $h:=1/\mu$, and
$j_*:=a_*\nabla u_*$. The flux $j_*$ belongs to $\mathcal Q_f$
and is optimal for $b_*$ in \eqref{eq:flux-representation}; thus
\begin{equation}\label{eq:reciprocal-reference-flux}
   I_\mu(b_*)
   =\int_\Omega |j_*|^2h(b_*)\,\d x
   =\int_\Omega a_*|\nabla u_*|^2\,\d x<+\infty.
\end{equation}
Since $h'=-\mu'/\mu^2$, condition
\eqref{eq:reciprocal-stationary-certificate} reads
\[
   \frac12 h'(b_*)|j_*|^2\in\partial g(b_*)
   \qquad\text{a.e. in }\Omega.
\]
By convexity of $g$ and concavity of $h$, for almost every $x\in\Omega$
and every $v\geq0$ in the effective domain of $g$,
\begin{align*}
   g(v)&\geq g(b_*)
      +\frac12h'(b_*)|j_*|^2(v-b_*),\\
   h(v)&\leq h(b_*)+h'(b_*)(v-b_*).
\end{align*}
Consequently,
\begin{equation}\label{eq:reciprocal-pointwise-calibration}
   g(v)-\frac12|j_*|^2h(v)
   \geq g(b_*)-\frac12|j_*|^2h(b_*).
\end{equation}

Take $b\in\mathcal A$ with $\int_\Omega g(b)\,\d x<+\infty$.
Applying \eqref{eq:reciprocal-pointwise-calibration} with $v=b(x)$
gives
\[
   0\leq |j_*|^2h(b)
   \leq 2g(b)-2g(b_*)+|j_*|^2h(b_*).
\]
The right-hand side is integrable by the assumptions and
\eqref{eq:reciprocal-reference-flux}. Hence $j_*$ has finite
flux energy also for the coefficient $\mu(b)$, and the flux
representation yields
\begin{align*}
   \mathcal E_\mu^-(b)
   &=\int_\Omega g(b)\,\d x-\frac12I_\mu(b)\\
   &\geq\int_\Omega
      \left(g(b)-\frac12|j_*|^2h(b)\right)\,\d x\\
   &\geq\int_\Omega
      \left(g(b_*)-\frac12|j_*|^2h(b_*)\right)\,\d x
    =\mathcal E_\mu^-(b_*).
\end{align*}
The inequality is trivial for competitors of infinite metabolic
energy or outside $\mathcal A$. Thus $b_*$ is a global minimizer.
If $g$ is strictly convex, the metabolic integral is strictly
convex on its effective domain. Together with convexity of
$-I_\mu$, this proves uniqueness.
\end{proof}

\iffalse
\begin{remark}
When $g$ is differentiable at $b_*>0$, condition
\eqref{eq:reciprocal-stationary-certificate} becomes
\[
   g'(b_*)=-\frac12\mu'(b_*)|\nabla u_*|^2.
\]
The certificate is sufficient even if the inverse-moment
constraint is active, provided the pointwise relation holds;
it corresponds to a stationary pair with zero constraint
multiplier. It does not characterize minimizers with a
nonzero multiplier, nor does it assert that an admissible
stationary pair exists. In particular, no global gradient-flow
conclusion follows from this proposition.
\end{remark}
\fi

\subsection{Inverse-power mobilities and the subquadratic regime}

The inverse-power family provides the subquadratic counterpart of the
positive-power example in Section~\ref{sec:ConcaveMobility}. Let
\begin{equation}\label{eq:reciprocal-power-mobility}
   \mu(s)=\frac1\kappa s^{-\kappa},
   \qquad g(s)=\frac1\lambda s^\lambda,
   \qquad 0<\kappa\leq\alpha,\quad
          \lambda\geq1,\quad\lambda>\kappa,
\end{equation}
with $\alpha\leq1$ given by \eqref{def:alpha}.
Then $h(s)=\kappa s^\kappa$ is concave and assumptions
\textup{(R1)--(R3)} hold with $\beta=\kappa$ and with
the admissible set $\mathcal A$ from
\eqref{def:reciprocal-admissible} for any $\gamma>2\kappa$.

Suppose that $b_*\in\mathcal A$ and
$u_*:=u[\mu(b_*)]$ satisfy the pointwise stationary relation
\begin{equation}\label{eq:reciprocal-power-stationarity}
   b_*^{\lambda+\kappa}=\frac12|\nabla u_*|^2
   \qquad\text{a.e. in }\Omega.
\end{equation}
This is precisely the differentiable form of
\eqref{eq:reciprocal-stationary-certificate}, since
$\mu'(s)=-s^{-\kappa-1}$ and $g'(s)=s^{\lambda-1}$.
Furthermore,
\[
   \mu(b_*)|\nabla u_*|^2=\frac2\kappa b_*^\lambda,
\]
so the weighted energy identity implies
\begin{equation}\label{eq:reciprocal-power-metabolic}
   \int_\Omega b_*^\lambda\,\d x
   =\frac\kappa2 I_\mu(b_*)<+\infty.
\end{equation}
Thus the finite-metabolic-energy hypothesis in
Proposition~\ref{prop:reciprocal-stationary-certificate} holds
automatically. That proposition shows that $b_*$ is a global
minimizer of $\mathcal E_\mu^-$, unique when $\lambda>1$.
No strict inactivity of the inverse-moment constraint is needed.

The stationary relation gives
\begin{equation}\label{eq:reciprocal-power-candidate}
   b_*=2^{-1/(\lambda+\kappa)}
      |\nabla u_*|^{2/(\lambda+\kappa)},
   \qquad
   \mu(b_*)=c_{\kappa,\lambda}|\nabla u_*|^{p-2},
\end{equation}
where
\[
   c_{\kappa,\lambda}:=\frac{2^{\kappa/(\lambda+\kappa)}}{\kappa},
   \qquad p:=\frac{2\lambda}{\lambda+\kappa}.
\]
Since $u_*$ is the selected energy solution, it follows that
\begin{equation}\label{eq:reciprocal-power-p-laplace}
   -c_{\kappa,\lambda}\operatorname{div}
   \bigl(|\nabla u_*|^{p-2}\nabla u_*\bigr)=f
   \qquad\text{in }\mathcal D'(\Omega),
\end{equation}
with homogeneous Dirichlet boundary condition in the trace sense.
Here
\[
   1<p<2 \quad\text{for }d\in\{2,3,4\},
   \qquad \frac{2d}{d+4}\leq p<2
   \quad\text{for }d\geq5.
\]
Moreover, \eqref{eq:reciprocal-power-metabolic} shows that
$\nabla u_*\in L^p(\Omega)$. The weak-cutoff zero-trace property
then implies $u_*\in W_0^{1,p}(\Omega)$.
For exponents $p<2d/(d+2)$, a general source
$f\in L^2(\Omega)$ need not define an element of
$W^{-1,p'}(\Omega)$; nevertheless, for the stationary pairs
considered here, equation \eqref{eq:reciprocal-power-p-laplace}
itself supplies the required dual-space compatibility.

Conversely, solving the subquadratic $p$-Laplacian does not by
itself produce an admissible stationary pair. Starting from a
solution $u_*\in W_0^{1,p}(\Omega)$ of
\eqref{eq:reciprocal-power-p-laplace}, the candidate
\eqref{eq:reciprocal-power-candidate} must satisfy
\begin{equation}\label{eq:reciprocal-power-admissibility}
   b_*\in L^2(\Omega),\qquad
   \int_\Omega b_*^{-\gamma}\,\d x\leq\eta,
   \qquad \gamma>2\kappa,
\end{equation}
and $u_*$ must belong to the smooth-core weighted space
$\Ha{\mu(b_*)}$. These requirements are not automatic from
the $p$-energy. If they hold, the distributional equation
extends to the weighted energy space, so $u_*=u[\mu(b_*)]$;
Proposition~\ref{prop:reciprocal-stationary-certificate} then
gives global minimality, and uniqueness when $\lambda>1$.
A sufficient condition for the core-membership requirement is
$u_*\in W_0^{1,4}(\Omega)$: since $\mu(b_*)\in L^2(\Omega)$,
smooth approximation in $W_0^{1,4}(\Omega)$ converges also in
the weighted energy norm.

%The reduction to \eqref{eq:reciprocal-power-p-laplace} therefore identifies the stationary equation but does not establish
%existence of an admissible pair for arbitrary sources and parameters. In particular, no existence or convergence theorem
%for an unrestricted reciprocal-mobility gradient flow is claimed.

\begin{remark}[Gradient flows under uniform ellipticity]
Although the functional $\mathcal E_\mu^-$ need not be
lower semicontinuous on the unrestricted admissible set
$\mathcal A$, a rigorous $L^2$-gradient-flow theory can
be recovered by imposing uniform bounds on the coefficient.
Indeed, for fixed $0<\varepsilon<M<+\infty$, consider
the closed convex set
\[
   \mathcal A_{\varepsilon,M}
   :=\{b\in L^2(\Omega):
       \varepsilon\leq b\leq M\text{ a.e. in }\Omega\}.
\]
Since $\mu$ is positive and continuous on
$[\varepsilon,M]$, the coefficients $\mu(b)$ are
uniformly bounded and uniformly elliptic.
Consequently, if $b_n\to b$ in $L^2(\Omega)$ with
$b_n,b\in\mathcal A_{\varepsilon,M}$, standard
elliptic stability yields
\[
   u[\mu(b_n)]\to u[\mu(b)]
   \quad\text{strongly in }H_0^1(\Omega),
\]
and hence $I_\mu(b_n)\to I_\mu(b)$.

Thus, if $g$ is finite and convex on
$[\varepsilon,M]$, the restricted functional
\[
   \mathcal E_{\mu,\varepsilon,M}^-(b)
   :=-\frac12 I_\mu(b)+\Gamma(b)
     +\iota_{\mathcal A_{\varepsilon,M}}(b)
\]
is proper, convex and lower semicontinuous on
$L^2(\Omega)$, and therefore generates a unique
$L^2$-subgradient flow. The box constraint contributes
its normal cone to the evolution, leading to
projections at the lower and upper bounds whenever
a pointwise identification is justified.

Related uniformly elliptic auxiliary-variable
gradient flows, formulated in an adapted metric,
are studied in~\cite{ZampiniBoffiDovletovMarkowich2026}.
The box restriction avoids the weighted-space
instability discussed above, but excludes genuinely
degenerate conductivities. We therefore do not
pursue this restricted flow here.
\end{remark}
\vspace{1cm}

\section*{Declaration of Generative AI Use}

During the preparation of this manuscript, the authors used Claude Opus 5.5 and Fable 5.1, and ChatGPT Astra 6.0 and Sol 6.1 exclusively for assistance with language editing, improving the clarity and presentation of the manuscript, LaTeX transcription, and checking the consistency and correctness of mathematical arguments. All original ideas, mathematical results, proofs, and computations are the authors' own. All AI-assisted suggestions were critically reviewed and verified by the authors, who take full responsibility for the content of the manuscript.
\vspace{1cm}

%%%%%%%%%%%%%%%%%%%%%%%%%%%%%%%%%%%%%%%%%%%%%%

\end{document}